\documentclass[12pt]{amsart}
\usepackage{amsfonts}
\usepackage{amsxtra}
\usepackage{amsthm}
\usepackage{amsfonts, amscd}
\usepackage{xypic, xy}
\usepackage{amssymb,latexsym,eufrak,amsmath,amscd, graphicx, color}
\usepackage{pgf,tikz}
\usetikzlibrary{arrows}



\setlength{\textwidth}{6 in} \setlength{\textheight}{9.5in}
\setlength{\topmargin}{-0.3in} \setlength{\evensidemargin}{0.3in}
\setlength{\oddsidemargin}{0.3in}

\def\ZZ{{\mathbb Z}}
\def\NN{{\mathbb N}}

\def\CC{{\mathbb C}}
\def\AA{{\mathbb A}}
\def\RR{{\mathbb R}}
\def\QQ{{\mathbb Q}}
\def\PP{{\mathbb P}}

\def\cI{\mathcal{I}}
\def\cE{\mathcal{E}}

\def\cA{\mathcal{A}}

\def\cG{\mathcal{G}}
\def\cF{\mathcal{F}}
\def\cC{\mathcal{C}}
\def\cL{\mathcal{L}}
\def\cO{\mathcal{O}}

\def\cN{\mathcal{N}}
\def\cU{\mathcal{U}}
\def\cV{\mathcal{V}}
\def\cT{\mathcal{T}}

\DeclareMathOperator{\Coker}{Coker}

\DeclareMathOperator{\Spec}{Spec}

\DeclareMathOperator{\Supp}{Supp}


\newtheorem{lemma}{Lemma}[section]
\newtheorem{theorem}[lemma]{Theorem}
\newtheorem{corollary}[lemma]{Corollary}
\newtheorem{proposition}[lemma]{Proposition}

\theoremstyle{definition}
\newtheorem{definition}[lemma]{Definition}

\newtheorem{remark}[lemma]{Remark}
\newtheorem{notation}[lemma]{Notation}

\numberwithin{equation}{section}

\newcommand{\bean}{\begin{eqnarray}}
\newcommand{\eean}{\end{eqnarray}}
\newcommand{\be}{\begin{displaymath}}
\newcommand{\ee}{\end{displaymath}}
\newcommand{\bea}{\begin{eqnarray*}}
\newcommand{\eea}{\end{eqnarray*}}

\newcommand{\ol}{\overline}



\begin{document}

\title{Gromov-Witten invariants for varieties with $\CC^*$ action}

\author{Anca~Musta\c{t}a, Andrei~Musta\c{t}\v{a}}

\address{School of Mathematical Sciences, University College Cork, Ireland}

\begin{abstract} For any smooth projective variety with a $\CC^*$ action, we reduce the problem of computing its Gromov-Witten invariants to the similar problem for its fixed locus. Starting from the stacky version of variation of GIT for our variety, we construct  the building blocks for the fixed loci of the moduli space of stable maps. We use this construction to compute their contribution to the virtual fundamental class.

\end{abstract}

\email{{\tt andrei.mustata@ucc.ie}}
\maketitle

\bigskip

\section{Introduction}

For any compact manifold with a one parameter group action, the Atiyah-Bott Localization Theorem allows one to recover an equivariant cohomology class from its pullbacks to the fixed loci.  In the algebraic context a similar result was established by D.Edidin and W.Graham in \cite{edidingraham}. Starting from the first computations of Gromov-Witten invariants, localization proved to be a powerful tool in Gromov-Witten theory. Not long after the introduction of a rigorous algebraic-geometric definition for virtual fundamental classes by K.Behrend and B.Fantechi (see \cite{kai}),
T.Graber and R.Panharipande (\cite{graber_pand}) proved the virtual localization formula
 for moduli spaces with a $(\CC^*)^k$ equivariant perfect obstruction theory.
The main example of such moduli spaces at the time was the moduli space of stable maps for smooth projective varieties with a torus action.

However until now, this method has been applied only for varieties with strong torus action, i.e. with only finitely many  orbits in dimensions 0 and 1. In this paper we develop a general procedure by which localization can be applied to compute equivariant Gromov-Witten invariants for a more general class of smooth projective varieties $X$ with  $\CC^*$ actions.

Virtual localization reduces the virtual fundamental class $[\overline{M}_{g,n}(X,\beta)]^{vir}$ to a sum in the expected equivariant Chow group
\bea [\overline{M}_{g,n}(X,\beta)]^{vir}= i_*\sum \frac{[F_\Gamma]^{vir}}{e^{\CC^*}(N_{F_{\Gamma }}^{vir})}\eea
where the summands correspond to the components $F_\Gamma$ of the fixed point locus of $\overline{M}_{g,n}(X,\beta)$. The classes ${[F_\Gamma]^{vir}}$ and ${e^{\CC^*}(N_{F_{\Gamma }}^{vir})}$  are obtained from the \emph{fixed} and \emph{moving} parts of the obstruction theory restricted to $F_\Gamma$.

Following the procedure developed in the case of target varieties with strong torus action, the components of the fixed point locus in the moduli space of stable maps can be indexed by decorated graphs. Each vertex $v$ in such a graph contributes a moduli space $\overline{M}_{g_v,n_v}(X_v,\beta_v)$, where $X_v$ is a component of the fixed locus of $X$. Each edge $(u, v)$ corresponds to invariant curves whose image stretches between the fixed loci $X_v$ and $X_u$. Thus an edge contributes a certain fixed component of a moduli space of stable maps in genus zero, with two marked points. We will denote such components by $M_{\omega, 2}$. The component of $\overline{M}_{g,n}(X,\beta)^{\CC^*}$ associated to a decorated graph $\Gamma$ is then a finite quotient of a fibre product of spaces mentioned above.

The virtual fundamental class of such a product splits into a product of contributions from the constituent moduli spaces, with additional special contributions of the cotangent classes $\psi$ at the nodes.
Thus computing the Gromov-Witten invariants of the $X$ via localization requires two types of inputs:
\begin{itemize}
\item[(1)] The Gromov-Witten invariants of the fixed point loci in X, with descendants;
\item[(2)] The contributions from the spaces $M_{\omega, 2}$.
\end{itemize}
Assuming the former known, we focus on the later. We will show how to calculate this contribution in terms of the following data:
\begin{itemize}
\item  The components of the fixed locus in $X$, their normal bundles and their decomposition induced by the $\CC^*$--action;
\item  The Bialynicki-Birula decompositions of $X$ given by the torus action;
\item A stacky version of the variation of GIT for $X$, as well as for its Bialynicki-Birula strata.
\end{itemize}
Of all the spaces $M_{\omega, 2}$, the most prominent is a component (or union of components) of the fixed point locus in the moduli space of stable maps with the homology class of the action map $t \to t\cdot x$ for generic $x\in X$. To explain how we constructed this particular $M_{\omega, 2}$, we start by considering the natural Deligne-Mumford stack structures of the GIT quotients of X, and their weighted blow-ups corresponding to variation of GIT. Taking fiber products we obtain an entire hierarchy of spaces, and $M_{\omega, 2}$ is the inverse limit of the induced  inverse system. Each of the spaces in the system represents a moduli problem,  and the universal families  of all these moduli spaces duly form a hierarchy of their own. Furthermore, each universal family comes with its own evaluation map  into a suitable target space birational to $X$. The universal family over $M_{\omega, 2}$ is the inverse limit of the inverse system generated by these target spaces and the birational morphisms between them.
The three networks thus obtained are connected into a network of triples, each triple consisting of a moduli space, universal family and evaluation map. This intricate structure allows us to compare the moving parts of the obstruction theories for these moduli spaces.

 Furthermore, each of the three networks is organized hierarchically, with the structure of the weighted blow-ups at the basis readily decipherable in terms of the Bialynicki-Birula strata in X, their normal bundles and the weights induced by the $\CC^*$--action.
This allows us to compute the moving part of the virtual fundamental class for the main space $M_{\omega, 2}$. The fixed part also results from the inverse system structure based on the formula proven in \cite{andrei} (Theorem 6.3).

The remaining spaces $M_{\omega, 2}$ can be divided into two categories. Some are obtained from the stacky points of the stacky GIT quotients mentioned earlier. More precisely, certain components of the inertia stacks of the above stacky quotients also form their own inverse systems, whose inverse limits are some of the desired spaces $M_{\omega, 2}$. A second category of spaces parametrizes maps whose class is not a multiple of the generic orbit. Such stable maps stretch between components of the fixed locus of $X$  other than the source and sink. In this case we employ stacky quotients of Bialinycki-Birula strata, which are pulled back through the networks of maps constructed above and  intersected to yield the second category of spaces $M_{\omega, 2}$.

The paper is organized as follows: In section 2 we set up some basic tools and notations required in indexing the components of the fixed locus of the moduli space of stable maps, and outline the main factors in the equivariant computation of the virtual fundamental class. In section 3 we describe the main  component $M_{\omega, 2}$. We set up the three networks of spaces, expressing each as canonical components of the fixed locus of certain moduli spaces of weighted stable maps. We also record the structure of the most important weighted blow-up morphisms in some detail. The fourth section is dedicated to computations of relevant classes in equivariant $K$-theory.

\section{The fixed locus of the moduli space of stable maps}

Let $X$ be a smooth projective variety with an algebraic $\CC^*$--action such that for a generic point $x \in X$, the stabilizer $ \mbox{ Stab}(x)$ is trivial.

The $\CC^*$- action on $X$ induces a natural $\CC^*$- action on the moduli space of stable maps $\overline{M}_{0,n}(X, \beta)$. Our first goal is to describe the fixed point locus $\overline{M}_{0,n}(X, \beta)^{\CC^*}$. We will present a way of indexing (unions of) components of $\overline{M}_{0,n}(X, \beta)^{\CC^*}$ by suitably chosen graphs.
We will start by defining the main building blocks.

We consider a $\CC^*$--equivariant embedding $ \iota: X \hookrightarrow \prod_{i=1}^m \PP^{n_i}$ and projections $\pi_i: \prod_{k=1}^m \PP^{n_k} \to \PP^{n_i}$
giving ample line bundles $\cL_i:= (\pi_i\circ \iota)^*\cO_{\PP^{n_i}}(1)$, such that the classes $c_1(\cL_i)$ with $i\in\{1,...,n\}$ form a basis for $H^{1,1}(X)$.

For the $\CC^*$--action \bea   \CC^*\times \PP^{n_i} & \to &  \PP^{n_i} \mbox{ given by }
\\ (t, [z_0:...:z_{n_i}]) &\to & [t^{a_0}z_0:...: t^{a_{n_i}}z_{n_i}] \eea
we define the moment map $\mu_i: \PP^{n_i} \to \RR$ by \bea  \mu_i([z_0:...:z_{n_i}]) := \frac{ \sum_{j=0}^{n_i} a_j |z_j|^2}{ \sum_{j=0}^{n_i} |z_j|^2}. \eea
 We note that $\mu_i$ maps the $\CC^*$--fixed locus of $\PP^{n_i}$ onto the set of weights $\{a_0, ..., a_{n_i}\}$.

\begin{remark} \label{degree of orbit maps} For each $p\in \PP^{n_i}$, the function $\nu:\CC^* \to \RR$ given by $\nu(t)= \mu_i( t\cdot p)$ is constant on circles $S^1(r)=\{t\in \CC^*\mbox{; } |t|=r\}$ while $\nu_{|\RR^*}$ is non-decreasing and
\bea  \lim_{t\to 0} \mu_i( t\cdot p) &=& \mbox{ min }\{a_j \mbox{; } j\in \{1,..., n_i\} \mbox{ and } z_i\not= 0\}=: m(p),  \\
 \lim_{t\to \infty} \mu_i( t\cdot p) &=& \mbox{ max }\{a_j \mbox{; } j\in \{1,..., n_i\} \mbox{ and } z_i\not= 0\}=: M(p),\eea
 so that the closure $\overline{O_p}$ of the orbit $O_p=\CC^* p$ satisfies $ \overline{\mu_i(O_p)}=[m(p), M(p)]$.

Furthermore, the map $\CC^* \to \PP^{n_i}$ given by $ t \to  t\cdot p $
      extends uniquely to a map $f_p: \PP^1 \to \PP^{n_i}$ satisfying
\bea  \deg f_{p *}[\PP^1]= \deg  f_{p *}(\cO_{\PP^{n_i}}(1)) = M(p)- m(p). \eea
\end{remark}

 Putting together the properties of $\mu_i$ above we obtain the following

\begin{lemma}\label{classes of orbit maps}
Consider the map  $\mu^m:  \prod_{i=1}^m \PP^{n_i} \to \RR^m$ given by $\mu^m = (\mu_1, ..., \mu_m)$, and the isomorphism  $\ZZ^m \to (H^{1,1}(X))^V \bigcap H_2(X, \ZZ)$ given by the dual of the basis $\{ c_1(\cL_i)\mbox{; } i\in\{1,...,n\}\} $ of $H^{1,1}(X)$ associated to the embedding $ \iota: X \hookrightarrow \prod_{i=1}^m \PP^{n_i}$.

For each $x\in X$, let $x_0:=\lim_{t\to 0}  t\cdot x$ and $x_\infty:=\lim_{t\to \infty} t\cdot x$ and let $\mbox{ Stab }(x)$ denote the stabilizer of $x$.
\begin{itemize}
\item[a)] The components of the fixed locus of $X$ under the $\CC^*$-- action are mapped by $\mu^m:  \prod_{i=1}^m \PP^{n_i} \to \RR^m$ into points on the lattice $\ZZ^m$, with coordinates given by the weights of the $\CC^*$--action on the $\PP^{n_i}$-s.
\item[b)] The map $ \CC^* \to X $ given by $  t \to  t\cdot x$
      extends uniquely to a $|\mbox{ Stab}(x)|: 1$ map $f_x: \PP^1 \to X$ of class $f_{x *}[\PP^1]$ represented as a vector
      \bea  f_{x *}[\PP^1] = \mu^m(x_\infty) -\mu^m(x_0) \in \ZZ^m, \eea while the closure of the orbit $O_x$ satisfies
      \bea  [\overline{O_x}] = \frac{\mu^m(x_\infty) -\mu^m(x_0)}{|\mbox{ Stab}(x)|}.\eea

\end{itemize}
\end{lemma}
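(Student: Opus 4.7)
My approach is to reduce the entire statement to Remark \ref{degree of orbit maps} coordinate by coordinate, using that the embedding $\iota: X \hookrightarrow \prod_{i=1}^m \PP^{n_i}$ is $\CC^*$-equivariant and that $\cL_i = (\pi_i\circ \iota)^*\cO_{\PP^{n_i}}(1)$.

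For part (a), equivariance of $\iota$ forces $X^{\CC^*}$ to lie inside $(\prod_i \PP^{n_i})^{\CC^*} = \prod_i (\PP^{n_i})^{\CC^*}$. Each $(\PP^{n_i})^{\CC^*}$ is a disjoint union of linear subspaces indexed by the distinct weights $a_j$, on which $\mu_i$ takes the constant value $a_j \in \ZZ$. Hence $\mu^m$ is locally constant on $X^{\CC^*}$ and takes values at lattice points of $\ZZ^m$, each coordinate being one of the weights of the $\CC^*$-action on the corresponding factor.

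For part (b), I would construct $f_x$ componentwise. Composing the orbit map $\CC^* \to X$, $t \to t\cdot x$, with $\pi_i \circ \iota$ produces the orbit map of $p_i := \pi_i(\iota(x)) \in \PP^{n_i}$, which by the Remark extends uniquely to $f_{p_i}: \PP^1 \to \PP^{n_i}$. The resulting map $\PP^1 \to \prod_i \PP^{n_i}$ sends the dense open $\CC^*$ into $\iota(X)$, and since $\iota(X)$ is closed, factors through a unique morphism $f_x: \PP^1 \to X$ sending $0, \infty$ to $x_0, x_\infty$. To compute its class in the basis dual to $\{c_1(\cL_i)\}$, I use the projection formula together with $\pi_i \circ \iota \circ f_x = f_{p_i}$ (by uniqueness of extension):
\bea f_{x*}[\PP^1] \cdot c_1(\cL_i) = f_{p_i *}[\PP^1] \cdot c_1(\cO_{\PP^{n_i}}(1)) = M(p_i) - m(p_i) = \mu_i(x_\infty) - \mu_i(x_0), \eea
the last equality following from the description of $M(p_i)$ and $m(p_i)$ in the Remark and the identification $\mu_i = \mu_i \circ \pi_i \circ \iota$ on $X$.

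Finally, on $\CC^* \subset \PP^1$ the morphism $f_x$ factors as $\CC^* \to \CC^*/\Stab(x) \xrightarrow{\sim} O_x$, in which the first map has degree $|\Stab(x)|$; so $f_x: \PP^1 \to \overline{O_x}$ is a morphism of degree $|\Stab(x)|$ between smooth projective curves, yielding $f_{x*}[\PP^1] = |\Stab(x)| \cdot [\overline{O_x}]$ and hence the stated formula for $[\overline{O_x}]$. The main obstacle, such as it is, is the careful bookkeeping between three different incarnations of degree at play here — the intersection pairing with $c_1(\cL_i)$, the jump $M(p_i)-m(p_i)$ in the $i$-th moment coordinate, and the order of the stabilizer — together with the verification that extensions constructed componentwise in $\prod_i \PP^{n_i}$ really descend to $X$; both are handled cleanly by equivariance and closedness of $\iota(X)$.
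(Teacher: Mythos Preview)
Your proposal is correct and follows exactly the approach the paper itself takes: the paper's proof is the single sentence ``Via the projections $\pi_i: \prod_{k=1}^m \PP^{n_k} \to \PP^{n_i}$, the proof is reduced to the observations on $\mu_i$ in Remark \ref{degree of orbit maps}.'' You have simply written out in detail what that sentence asserts, including the descent of the componentwise extension back to $X$ and the stabilizer bookkeeping, neither of which the paper spells out.
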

\begin{proof} Via the projections $\pi_i: \prod_{k=1}^m \PP^{n_k} \to \PP^{n_i}$, the proof is reduced to the observations on $\mu_i$ in Remark \ref{degree of orbit maps}.  \end{proof}


We define a partial order on $\RR^m$ as follows: $u < v  \Longleftrightarrow u_i < v_i $  for all $i\in \{1,..., n\}$. Based on the following observations, we can construct an oriented graph with vertices in $\ZZ^m$ as follows:

\begin{definition} \label{graph of the action} \textbf{The oriented graph associated to a $\CC^*$-action on $X$:}

We define a set of vertices $\cV \subset \ZZ^m$ as the image through $\mu^m$ of the fixed point locus of $X$. For each $u \in \cV$, we denote by \bea X_u=\{x \in X \mbox{;  } \mu^m(x)=u \mbox{ and } t\cdot x=x \mbox{ for all }t \in \CC^*\}. \eea

 We define a set of edges $\cE \subset \cV \times \cV$ as follows: a pair $(u,v) \in \cE$ if and only if
 \begin{itemize}\item $u < v$, and
 \item there is a $\CC^*$--orbit $O_x\subseteq X$ such that $u= \mu^m(x_0)$ and  $v= \mu^m(x_\infty)$ . \end{itemize}
\end{definition}

\begin{definition}\label{invariant curve classes} \textbf{The invariant curve classes associated to the graph $(\cV, \cE)$:}
 Let $\Omega$ denote the set of pairs $(c,\beta)$ with $c \in \cV \cup \cE$ and $\beta \in H_2(X, \ZZ)$ defined by the following conditions:
  \begin{itemize}
  \item[(i)] If $c \in \cV$, then $\beta\in j_{c *} H_2(X_c, \ZZ)$ where $j_c: X_c \hookrightarrow X$.
  \item[(ii)] If $c=(u, v)\in \cE$, then $\beta=k[\overline{O_x}]$ where the orbit $O_x= \{t\cdot x|t\in \CC^*\}$ satisfies  $u, v \in \overline{\mu^m(O_x)}$, and $k\in \NN$. \end{itemize}
 \end{definition}

 By Lemma \ref{classes of orbit maps}, we have $\beta=k[\overline{O_x}]= \frac{k}{|\mbox{ Stab}(x)|} (v-u)\in \QQ^m.$

\begin{notation}
  For each $\omega \in \Omega$,  we will denote its second component by $\beta_\omega$. We will also denote by $X_\omega$ the fixed locus $X_u$ whenever $\omega= (u, \beta) \in \Omega.$
\end{notation}

\begin{notation}
      For each $\omega=(u, \beta) \in \Omega \bigcap (\cV \times H_2(X, \ZZ))$ and non-negative integer $n$, we denote  \bea   M_{\omega, n}(X):= \left\{ \begin{array}{l} \overline{M}_{0,n}(X_{u},\beta) \mbox{, if } n \geq 3, \mbox{ or } \beta\not=0, \\
     X_{u} \mbox{ otherwise. }    \end{array} \right. \eea
\end{notation}

\begin{definition} \label{pieces of the fixed locus}
      For each $\omega=((u, v), \beta) \in \Omega \bigcap (\cE \times H_2(X, \ZZ))$, we define $M_{\omega,2}(X)$ to be the substack of $\overline{M}_{0,2}(X, \beta)^{\CC^*}$ parametrizing isomorphism classes of stable maps with marked points $[(C, \varphi, (s_0, s_\infty))]$ fixed by $\CC^*$ which in addition satisfy the following properties:
      \begin{itemize}
     \item[(1)] $C=\bigcup_{i=1}^{s} C_{j}$ is a connected chain of $\PP^1$-s (possibly of length one). The marked points $s_0, s_\infty$ lie in the first and last components of the chain, respectively, and
      $\mu^m \circ \varphi (s_0)=u$, while $\mu^m \circ \varphi (s_\infty)=v$.
        \item[(2)] The map $\varphi$ is flow-preserving: Given the marked points $s_0, s_\infty=s_s$ and the nodes $\{ s_j \}= C_{j} \bigcap C_{{j+1}} $ for $j\in \{1,...,s-1\} $, their images $x_j:=\varphi (s_j)$ satisfy
        \bea   x_j= \lim_{t\to 0} t\cdot x \mbox{ and } x_{j+1}= \lim_{t\to \infty} t\cdot x \mbox{ for generic } x\in \varphi(C_{j}).  \eea
         \item[(3)] $\varphi_*[C]=\beta$ and the restriction of $\varphi$ on each component $C_{j}$ has class  \bea  \varphi_{*}[C_{j}]=\frac{\beta}{\mu^m (x_\infty )- \mu^m (x_0)}(\mu^m (x_{j})-\mu^m (x_{j-1})) \in \ZZ^m. \eea
      \end{itemize}
\end{definition}

 With the notations above, each of the irreducible components $\varphi(C_{j})$ is invariant under the $\CC^*$--action on $X$,
      the points $x_j=\varphi(s_j)$ are exactly all the fixed points of the $\CC^*$-action on $\mbox{ Im } \varphi$.

Note that $\frac{\beta}{\mu^m (x_\infty )- \mu^m (x_0)}= \frac{\beta}{(v- u)} \in \QQ$ due to Lemma \ref{classes of orbit maps} and Definition \ref{invariant curve classes}.

Condition (3) is chosen so as to insure that the spaces thus constructed are the smallest building blocks suitable for assembling subspaces of the fixed point loci, e.g. $M_{\omega,2}(\prod_i\PP^{n_i})$ is a connected component of $\overline{M}_{0,2}(\prod_i\PP^{n_i}, \beta)^{\CC^*}$.

\bigskip

 We will group components of the fixed locus $\overline{M}_{0,n}(X,\beta)^{\CC^*}$ in a way that is naturally compatible with the decomposition into components of  $\overline{M}_{0,n}(\prod_i\PP^{n_i},\beta)^{\CC^*}$. We index such (unions of) components by triples $(T, \tau, m)$ defined as follows:

\begin{definition} \label{triple0}
A \emph{minimal triple} $(T, \tau, m)$ is defined by the following properties:
\begin{itemize}
\item[(1)] $T$ is a tree, with $V(T)$ and $E(T)$ the sets of vertices and edges, respectively.
For each vertex $v\in V(T)$, we denote by $d(v)$ the degree of $v$, i.e. the number of edges incident to $v$.
\item[(2)]$m: V(T) \cup E(T) \to \ZZ_{\geq 0}$ is such that $n= \sum_{v \in V(T)} m(v) + \sum_{e \in E(T)}m(e)$.
\item[(3)]$\tau : V(T) \cup E(T) \to \Omega$ is a map whose composition with the projection $\Omega \to  \cV \cup \cE$ is a morphism of graphs, and satisfying
\bea   \beta = \sum_{v \in V(T)} \beta_{\tau(v)}+ \sum_{e \in E(T)} \beta_{\tau(e)}. \eea
\item[(4)] For each chain $\{v_1, v_2\}, \{v_2, v_3\},..., \{v_{s-1},v_s\}$ in $T$ such that $d(v_j)=2$ for $1<j<s$, if
\bea  \tau(v_1)=(u_1, \beta_1)\mbox{, }  \tau(v_k)=(u_k, \beta_k) \mbox{ and }  \tau(v_j)=(u_j, 0) \mbox{ for }  1<j<s, \eea
then for all possible elements $((u_1, u_s), \beta )\in \Omega$, there exists an index $j\in\{ 1,..., s-1\}$ such that
\bea  {\beta_{\tau(\{v_j, v_{j+1}\})}} \not= \frac{\beta}{u_{s}-u_1}{(u_{j+1}-u_j)}. \eea
\end{itemize}
\end{definition}

As before, we note that $((u_1, u_s), \beta )\in \Omega \Longrightarrow \frac{\beta}{u_{s}-u_1}\in \QQ$.

\begin{definition} \label{Triples}
 For each triple $(T,\tau,m)$  as above we associate a moduli space $F_{(T,\tau,m)}$
defined as the fibre product of stable map spaces given by the following fibre square diagram:
  \bea \diagram
F_{(T,\tau,m)} \dto^{} \rto^{ } & \prod_{v\in V(T)} M_{\tau(v),d(v)+m(v)} \times \prod_{e \in E(T)}M_{\tau(e),m(e)+2} \dto^{ \prod_{i\in S} \mbox{ev}_i} \\
\Delta:\prod_{v\in V(T)} \prod_{i=1}^{d(v)} X_{\tau(v)} \rto & \prod_{v\in V(T)} \prod_{i=1}^{d(v)} X_{\tau(v)}\times \prod_{e=\{v_1, v_2\} \in E(T)}( X_{\tau(v_1)}\times X_{\tau(v_2)}),
\enddiagram \eea
where  $S$ is constructed by selecting the first $d(v)$ marked points from  each of the factors of the product \bea  \prod_{v\in V(T)}\overline{M}_{0,d(v)+m(v)}(X_{{\tau(v)}},\beta_{\tau(v)})\eea and the last 2 marked points in each factor of $\prod_{e \in E(T)}M_{\tau(e),2}$, and $\mbox{ev}_i$ are the corresponding evaluation maps.

Note that $\sum_{v\in V(T)}d(v)=2|E(T)|$ and the factors of the product \bea \prod_{e\{v_1, v_2\}  \in E(T)}( X_{\tau(v_1)}\times X_{\tau(v_2)})\eea are, up to a permutation, the same as those in
$\prod_{v\in V(T)} \prod_{i=1}^{d(v)} X_{\tau(v)}$ so that the lower arrow ${\Delta}$ is the diagonal map followed by the permutation above.

\end{definition}


\begin{proposition} \label{fixpointsmoduli}
The fixed point locus $\overline{M}_{0,n}(X,\beta)^{\CC^*}$ is a disjoint union of  spaces $F_{(T,\tau,m)}/A_{(T,\tau,m)}$, where the moduli spaces $F_{(T,\tau,m)}$ are indexed by all triples $(T,\tau,m)$ as in Definition \ref{Triples}, and $A_{(T,\tau,m)}$ is a finite group (the automorphism group associated to the data $(T,\tau,m)$).
\end{proposition}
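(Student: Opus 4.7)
The plan is to build, from each $\CC^*$-fixed stable map, a unique minimal triple $(T,\tau,m)$, and then to identify the locus sharing a given triple with the stack $F_{(T,\tau,m)}/A_{(T,\tau,m)}$. First I would analyze the structure of a fixed stable map $[(C,\varphi,p_1,\ldots,p_n)]$. The $\CC^*$-fixedness lifts to a $\CC^*$-action on $C$ compatible with $\varphi$ and preserving the finite set of special points, which is therefore fixed pointwise. Any irreducible component $C_i$ with at least three special points has trivial stabilizer in $\mathrm{Aut}(\PP^1)$, so $\CC^*$ acts trivially on $C_i$ and $\varphi(C_i)\subseteq X_u$ for some $u\in\cV$. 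On a component with only two special points (which must then coincide with its $\CC^*$-fixed points $0,\infty$), $\CC^*$ may act non-trivially; in that case $\varphi|_{C_i}$ is an equivariant cover of an orbit closure $\overline{O_x}$ whose class, by \lemref{classes of orbit maps}, is a positive integer multiple of $\mu^m(\varphi(\infty))-\mu^m(\varphi(0))$.

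Starting from the dual tree of $C$, I would contract every adjacency between components of the first type into a single \emph{vertex subcurve}, each of which maps into a single $X_u$ with total class $\beta_v$ and carries the $m(v)$ markings lying on it. The remaining components form chains of orbit-closure covers, and I would group such a chain into a single edge of $T$ precisely when its internal nodes carry no markings, are not branch points, and the concatenated classes assemble to an element $((u,w),\beta_e)\in\Omega$. By \lemref{classes of orbit maps} and Definition~\ref{invariant curve classes}, this last condition translates exactly into the proportionality $\beta_{\tau(\{v_j,v_{j+1}\})}=\tfrac{\beta_e}{w-u}(u_{j+1}-u_j)$ whose negation defines the minimality condition~(4) of Definition~\ref{triple0}. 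Hence the triple thus produced is automatically minimal, and conditions~(1)--(3) follow from the tree structure of $C$ and the additivity of homology classes.

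For the converse and the identification of each stratum with $F_{(T,\tau,m)}/A_{(T,\tau,m)}$, I would reverse the process by gluing. A point of $F_{(T,\tau,m)}$ specifies, for each vertex $v$, a stable map in $M_{\tau(v),d(v)+m(v)}$ --- automatically $\CC^*$-fixed since $X_u$ is a fixed component --- and, for each edge $e$, a fixed stable map in $M_{\tau(e),m(e)+2}$, with evaluations matching at the shared nodes. Gluing produces a $\CC^*$-fixed stable map of class $\beta$ whose associated minimal triple recovers $(T,\tau,m)$. Two such gluings yield isomorphic stable maps exactly when the building blocks differ by a label-preserving automorphism of $(T,\tau,m)$; this yields the quotient by the finite group $A_{(T,\tau,m)}$, and minimality of $(T,\tau,m)$ forces uniqueness, giving the disjointness of the strata.

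The main obstacle will be establishing the canonicality of the decomposition in the second step, that is, showing that any two decompositions into vertex and edge subcurves satisfying the minimality condition differ only by an element of $A_{(T,\tau,m)}$. This reduces to matching the intrinsic merging criterion (existence of a suitable $\omega\in\Omega$ for a chain) with the purely numerical proportionality in condition~(4) of Definition~\ref{triple0}; once that equivalence is secured, the rest is a bookkeeping exercise of Kontsevich--Graber--Pandharipande type, adapted here to accommodate positive-dimensional fixed loci and the chain-of-orbit-closure refinements built into the spaces $M_{\omega,2}(X)$.
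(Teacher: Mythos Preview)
Your argument is essentially correct and follows the standard Kontsevich--Graber--Pandharipande template, suitably adapted to positive-dimensional fixed loci and to the chain-merging built into Definition~\ref{pieces of the fixed locus}. The paper, however, does not prove this proposition directly: immediately after stating it, the authors announce that the next section will give a \emph{constructive} proof of the representability of $M_{\omega,2}$, and hence implicitly of $F_{(T,\tau,m)}$. Section~3 then builds the main edge space $M_{\omega,2}$ explicitly as the inverse limit of a network of weighted blow-ups of the stacky GIT quotients $[X^s_{(j,j+1)}/\CC^*]$, with the decomposition of the fixed locus emerging only as a by-product.

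The difference is one of emphasis. Your route establishes the stratification formally, treating each $M_{\omega,2}$ as an a priori given closed substack of $\overline{M}_{0,2}(X,\beta)^{\CC^*}$ cut out by conditions (1)--(3) of Definition~\ref{pieces of the fixed locus}; this suffices for the proposition as stated and is logically self-contained. The paper's route, by contrast, is aimed at the computations in Section~4: it realizes $M_{\omega,2}$ concretely as a tower of weighted blow-ups so that the virtual normal bundle can be computed in equivariant $K$-theory via Propositions~\ref{induction}--\ref{Kclasses}. Your argument would still need that construction downstream, but it cleanly separates the decomposition statement from the analytic machinery. One small point: in your merging criterion you require internal nodes of a chain to carry no markings, whereas the paper's condition~(4) of Definition~\ref{triple0} is phrased purely in terms of $d(v_j)=2$ and $\beta_{\tau(v_j)}=0$, with markings allowed on edges via the spaces $M_{\tau(e),m(e)+2}$; you should reconcile your criterion with that convention so that your minimal triple matches the paper's.
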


We will dedicate the next section to a constructive proof of the representability of the functor $M_{\omega, 2}$, and implicitly $F_{(T,\tau,m)}$. We will focus on the main case when $omega$ corresponds to the class of the action map $t\to t\cdot x$ for $x\in X$ generic. The remaining cases will derive from here.

\subsection{Maps from curves in higher genus.} For moduli spaces of stable maps in general genus $g$, Definitions \ref{triple0} and \ref{Triples} and Proposition \ref{fixpointsmoduli} can be modified accordingly to work with subspaces $F_{(\Gamma,\gamma,m)}$ of $\overline{M}_{g,n}(X,\beta)^{\CC^*}$. In this context $\Gamma$ is a graph and the function $\gamma: E(\Gamma) \bigcup V(\Gamma) \to \Omega\times \ZZ_{\geq 0}$ has an additional coordinate $g_\gamma$ indexing the genus of a curve component, such that $g$ is obtained by summing the values of $g_\gamma$ for all the vertices and edges of $\Gamma$, together with the number of loops of the graph $\Gamma$.  While every $v\in V(\Gamma)$ has an associated moduli space of stable maps $\ol{M}_{g_{\gamma (v)}, m(v)+d(v)}(X_{\gamma(v)}, \beta_{\gamma(v)})$ with $g_{\gamma(v)}\geq 0$, in the case of $e\in E(\Gamma)$, the associated space $M_{\gamma(e), m(e)+2}$ parametrizes maps with domains of genus 0. The focus of this article is on the contribution of these last spaces to the Gromov-Witten invariant calculations.

\subsection{Gromov-Witten invariants}


The virtual localization formula proved by Graber and Pandharipande in \cite{graber_pand} presents the virtual fundamental class $[ \overline{M}_{g,n}(X,\beta)]^{vir}$ as a sum in the Chow ring  $A^{\CC^*}_*(\overline{M}_{g,n}(X,\beta))\otimes \QQ[t, \frac{1}{t}]$:
\bea [\overline{M}_{g,n}(X,\beta)]^{vir}= i_*\sum \frac{[F_{(\Gamma, \gamma,m)}]^{vir}}{e^{\CC^*}(N_{F_{(\Gamma, \gamma,m)}}^{vir})}\eea
where
$e^{\CC^*}$ denotes the top equivariant Chern class.

For each $F_{(\Gamma, \gamma,m)}$,  the class $[F_{(\Gamma, \gamma,m)}]^{vir}$  and $N_{F_{(\Gamma, \gamma,m)}}^{vir}$ are defined as follows: Given a perfect obstruction theory $(E^{\bullet}, \phi)$ for $\overline{M}_{g,n}(X,\beta)$ with an equivariant lift of the $\CC^*$-action,
\begin{itemize}
\item The \emph{virtual class} $[F_{(\Gamma, \gamma,m)}]^{vir}$ is calculated from the fixed part of the  restriction $(E^{\bullet}_{(\Gamma, \gamma,m)}, \phi_{(\Gamma, \gamma,m)})$ of the obstruction theory $(E^{\bullet}, \phi )$ to $F_{(\Gamma, \gamma,m)}$. This is itself a perfect obstruction theory (\cite{graber_pand}).
\item The \emph{virtual normal bundle} $N_{F_{(\Gamma, \gamma,m)}}^{vir}$ is a two--term complex defined as the moving part of $E_{\bullet, (\Gamma, \gamma,m)}.$
\end{itemize}

Furthermore one can write an explicit formula for each the terms $\frac{[F_{(\Gamma, \gamma,m)}]^{vir}}{e^{\CC^*}(N_{F_{(\Gamma, \gamma,m)}}^{vir})}$ by applying the formula for the virtual class of the boundary strata from \cite{ber}. Let $V_2(T) \subset V(T)$ denote the degree 2 vertices $v$ of tree $T$, with adjacent edges $e^1_v$ and $e^2_v$, and such that $\beta_{\gamma (v)}=0$. Then
$\frac{[F_{(\Gamma, \gamma,m)}]^{vir}}{e^{\CC^*}(N_{F_{(\Gamma, \gamma,m)}}^{vir})}$ can be written as
$$\frac{ [\Delta] \prod_{v\in V(T)}[M_{\gamma (v),d(v)+m(v)}]^{vir} \prod_{e \in E(T)} [M_{\gamma (e),2}]^{vir}}{\prod_{e \in E(T)}e^{\CC^*}(N_{M_{\gamma (e),2}}^{vir}) \prod_{v \in V_2(T)}(\psi_{e^1_v,v}+\psi_{e^2_v,v})\prod_{v \in V(T) \setminus V_2(T),v \in e \in E(T)}(\psi_{e,v} +\psi_{v,i})}.$$
Here $i$ is a marked point for $M_{\gamma (v),d(v)+m(v)}$, chosen so that $v$ and $i$ have the same images through their respective evaluation maps. Then $\psi_{e,v}$ and $\psi_{v,i}$ denote the top equivariant Chern classes of the line bundles on $M_{\gamma (e),2}$ and $ M_{\gamma (v),d(v)+m(v)}$, respectively, given by cotangent directions along the universal curve, restricted to the marked point corresponding to $v$, and the $i$-th marked point, respectively. As before, $ [M_{\gamma (e),2}]^{vir}$ and $[M_{\gamma (v),d(v)+m(v)}]^{vir}$ denote the fixed part of the virtual class, respectively the virtual class of the corresponding moduli spaces.

For each $\omega=((u,v), \beta)\in \Omega$  we define $c_{\omega,n_1,n_2}: H^*(X_{u})[t,t^{-1}] \to H^*(X_{v})[t,t^{-1}]$ by $$c_{\omega,n_1,n_2}(\alpha)= ev_{2*}(ev_1^*(\alpha) \psi_1^{n_1}\psi_2^{n_2}\cap \frac{[M_{\omega,2}]^{vir}}{e^{\CC^*}(N_{M_{\omega,2}}^{vir})}).$$
Here $n_1, n_2 \in \ZZ_{\geq 0}$.
Computation of equivariant Gromov-Witten invariants with descendants for $X$ can now be reduced to the computations of Gromov-Witten invariants with descendents for the fixed locus of $X$, together with calculating $c_{\omega,n_1,n_2}$. In this paper we will show how to compute $c_{\omega,n_1,n_2}$ by describing both $[M_{\omega,2}]^{vir}$ and $N_{M_{\omega,2}}^{vir}$.

\section{A construction of the main subspace $M_{\omega, 2}$ of $\overline{M}_{0,2}(X,\beta)^{\CC^*}$.}

Consider a $\CC^*$-- equivariant embedding of $X$ into $\PP^N$ constructed by choosing a linearization on a very ample line bundle on $X$.
By a slight abuse of notation, we will denote by $\mu$ both the moment map $\mu: \PP^N \to \RR$ and its restriction to $X$. We can choose the linearization so that $\mu(\PP^N)=[0,d]$. (Changing the linearization modifies the moment map by a translation on $\RR$.) We may assume without loss of generality that $\mu(X)=[0,d]$. Indeed, if $\mu(X)=[i,i']$ with $0<i$ or $i'<d$, then the decomposition into strata induced by the $\CC^*$--action on $\PP^n$ would insure that $X$ is embedded in a projective subspace invariant under the $\CC^*$--action, and hence we may replace $\PP^n$ with this subspace in our discussion.

\begin{notation} \label{I} Let $I$ denote the image through $\mu$ of the fixed point locus of $\PP^N$. For each $i\in I$, let $P_i=\{x \in \PP^N \mbox{;  } \mu(x)=i \mbox{ and } t \cdot x=x \mbox{ for all }t \in \CC^*\}$ and  $X_i=P_i \cap X$.
\end{notation}


\subsection{Fixed loci in  moduli spaces of weighted stable maps}  Let us consider the case when $\beta$ is the class of the action map $t \to t\cdot x$ for generic $x\in X$. The oriented graph $(\cV, \cE)$ associated to the $\CC^*$--action (Definition \ref{graph of the action}) has a minimal and maximal vertices $u_0$ and $u_\infty$, connected to all other vertices by oriented paths. With the notations from the previous section, the images of the moment maps for $\prod_i\PP^{n_i}$ and $\PP^N$  are connected by a projection $\RR^m \to \RR$, which sends $u_0$ to 0 and $u_\infty$ to $d$.

Our goal is to provide a concrete construction for the moduli space $M_{\omega, 2}(X)$ introduced in Definition \ref{pieces of the fixed locus},  in the main case when $\omega =((u_0, u_\infty), \beta)$. The ambient space for this construction will be a moduli space of weighted stable maps.
The term of \emph{weighted stable maps} was coined in \cite{alexeev_guy} and \cite{bayer_manin}, where it referred to stable maps with weights on the marked points. We will be using this term in the more general sense where it refers to stable rational maps with weights on the marked points and the map (see for example \cite{noi3}).


We will start by defining special fixed loci in moduli spaces of weighted stable maps.
We will show that the simplest of these correspond to the GIT quotients of $X$ by $\CC^*$, and we will use the birational maps between them to construct $M_{\omega, 2}(X)$ as an inverse limit of this family of spaces.

We recall the definition of the moduli spaces of weighted stable maps as in \cite{noi3}:

\begin{definition} \label{weighted map spaces}
For a rational number $a>0$ and an $n$-tuple $\cA=(a_1,...,a_n)\in \QQ^n$  such that $0\leq a_j\leq 1$  and  $\sum_{j=1}^{n}a_j+da>2$, we define the moduli space of weighted stable maps
$\overline{M}_{0, \cA}(\PP^N, d, a)$ to parametrize isomorphism classes of tuples
\[    [( \pi \colon C \to B , \{s_j\}_{1\leq j\leq n}, \cL , e )]    \]
where $\pi: C\to B$ is a family of smooth (or at most nodal) rational curves, $s_j$ are sections of $\pi$ mapping to smooth points of the fibres,  $\cL$ is a line bundle on $C$ of degree $d$ on each fiber $C_b$, and $e:\cO_C^{N+1}\to \cL$ is a morphism of fiber bundles (specified up to isomorphisms of the target) such that:

\begin{enumerate}
\item $\omega_{C|B}(\sum_{j=1}^na_js_j)\otimes \cL^a$ is relatively ample over $B$,
\item $\cG :=\Coker e$, restricted over each fiber $C_b$, is a skyscraper sheaf supported only on smooth points, and
\item for any $s \in C_b$ and for any $ J \subseteq \{1,...,n\}$ (possibly empty) such that $ s = s_j(b) $ for all $j \in J$, the following condition holds  $$\sum_{\j \in J}  a_j + a\dim\cG_{s}\leq 1.$$
\end{enumerate}
\end{definition}

 For a more intuitive description of a weighted stable map, we note that the rational map $\varphi: C \dashrightarrow \PP^N$ determined by $e$, when restricted to each fibre $C_b$, extends naturally to a well-defined map $\varphi_b: C_b \to \PP^N$ of degree $\deg \varphi_b \leq d$. Whenever $\deg \varphi_b < d$, the curve $C_b$ contains some special base-locus points (the support of $\cG_b =\Coker e_b$), such that
\bean  \label{degree of weighted stable map} \deg \varphi_b + \sum_{p\in \mbox{Supp }(\cG_b)} \dim\cG_{p} = d. \eean
Each point $p\in \mbox{Supp }(\cG_b)$ comes with a positive multiplicity $m_p:=\dim\cG_{p}$. Condition (1) determines the minimum possible degrees of $\varphi_b$ on ending components ("tails") of $C_b$ in terms of $a$ and the weights of the marked points on the given components. Condition (2) determines the exact cases when the special sections $s_j$ can intersect and the maximum possible multiplicity $m_p$ of the intersection point. Conditions (1) and (2) are complementary: for a given subset of marked sections, the maximum possible multiplicity at the intersection is 1 less than the minimum possible degree on an ending component.

  There is a natural map $\overline{M}_{0, n}(\PP^N, d) \to \overline{M}_{0, \cA}(\PP^N, d, a)$. We denote by  $\overline{M}_{0, \cA}(X, \beta, a)$ the image of  $\overline{M}_{0, n}(X, \beta)$ under this map.

\begin{notation} Consider a real number $a$ satisfying $0<a \leq \frac{1}{d}$.
For each pair of elements $i, i' \in I$ with $i< i'$, we will be using the set of weights on two marked points
 $\cA(i,i')=( 1-i a, 1-(d-i')a)$.
\end{notation}

We note that due to the chosen weights, a point in  $\overline{M}_{0, \cA(i,i')}(\PP^N, d, a)$ parametrizes a chain $C$ of $\PP^1$-s whose starting and ending components $C_0$ and $C_\infty$ contain the marked points $s_0$ and $s_\infty$ respectively, together with a map $\varphi: C\to \PP^n$ non-constant on each component, and multiplicities $  m_0=\dim\cG_{0}$ and $m_\infty=\dim\cG_{\infty}$  such that  \bean \label{conditions on tails} \deg \varphi_{|C_0} + m_0 > i \geq m_0 \mbox{, } \deg \varphi_{|C_\infty} + m_\infty > d-i' \geq m_\infty.\eean



The $\CC^*$- action on $X$ and the equivariant embedding into $\PP^N$ also induce a natural $\CC^*$- action on  $\overline{M}_{0, \cA}(X, \beta, a)$. Indeed, a linearization on the very ample line bundle on $X$  gives an action on the bundle $\cO_C^{N+1}$, and hence on a weighted stable map. Note that any two linearizations determine the same action on a weighted stable map. Indeed, two different linearizations associated to the same action on $\PP^N$ differ by multiplication by a scalar on $\cO_C^{N+1}$ and hence on $\cL$.

\begin{definition}
\label{M} We denote the elements of $I$ by $0=i_0<i_1<...<i_k=d$.
  For each pair of indices $j,{j'}\in \{0, 1,..., k\}$ we denote by $M_{(j,{j'})}(\PP^N)$ the substack of $\overline{M}_{0, \cA(i_j,i_{j'})}(\PP^N, d, a)^{\CC^*}$  representing classes of weighted stable maps \bea [C\to B, (s_0, s_\infty ), \cL, e ] \eea fixed by $\CC^*$ such that each fibre $C_b=\bigcup_{l=0}^{r} C_{l}$ is a connected chain of $\PP^1$-s  in addition satisfying the following conditions:
  \begin{itemize}
   \item[(1)] The weighted stable map $\varphi_b$ determined on each fibre $C_b$ by $e$ is \emph{flow-preserving}: the images $x_l:=\varphi_b (s_l)$ of the marked points $s_0, s_\infty=s_r$ and the nodes $\{ s_l \}= C_{l} \bigcap C_{{l-1}} $   satisfy  $ x_l= \lim_{t\to 0} t\cdot x$  and  $ x_{l+1}= \lim_{t\to \infty} t\cdot x $ for generic $ x\in \varphi_b(C_{l}).$
   \item[(2)] The map $\varphi_b$ is \emph{action-class-preserving}: $\deg \varphi_* [C_l]=\mu (x_{l+1})-\mu (x_l)$ for each component $C_l$.
     \end{itemize}

We define  $M_{(j,{j'})}(X): = M_{(j,{j'})}(\PP^N) \times_{\overline{M}_{0, \cA(i_j,i_{j'})}(\PP^N, d, a)}{\overline{M}_{0, \cA(i_j,i_{j'})}(X, \beta, a)} $.

 Let $\cU_{(j,{j'})}(X)$ denote the universal family over $M_{(j,{j'})}(X)$.

\end{definition}

In the case of spaces $M_{(j,j+1)}$,  condition (2) is superfluous as in this case a weighted stable map $\varphi: C \to X$ maps $C\cong \PP^1$ to just one orbit in $X$.

\begin{lemma} \label{bottom_line}
 Let $i_{j} < i_{j+1}$ be two consecutive elements of $I$.

 For each closed point $b$ in $M_{(j,j+1)}(X)$, the fibre over $b$ of $\pi: \cU_{(j,j+1)}(X) \to M_{(j,j+1)}(X)$ is isomorphic to $\PP^1$ and the corresponding weighted stable map $\varphi_b$ satisfies
 \bea        \mbox{ Im } \mu\circ\varphi_b \supset [i_j, i_{j+1}].   \eea
\end{lemma}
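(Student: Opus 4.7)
The plan is to establish two facts separately: (a) the domain curve $C_b$ is irreducible, hence isomorphic to $\PP^1$; and (b) once this is known, the image of $\mu\circ\varphi_b$ is forced to cover all of $[0,d]$, which contains $[i_j, i_{j+1}]$.

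First I would record a crucial reduction. Since $M_{(j,j+1)}(X)$ is cut out of $\overline{M}_{0,\cA(i_j,i_{j+1})}(X,\beta,a)$ with $\beta$ the class of the generic orbit, one has $d = \iota_*(\beta)\cdot H$, and the identity \eqref{degree of weighted stable map} forces $\sum_p m_p = 0$. Thus $\varphi_b$ has no base locus at all: $m_0 = m_\infty = 0$ and $\deg\cL_{|C_l} = \deg\varphi_{|C_l}$ on every component. Consequently the tail inequalities \eqref{conditions on tails} simplify to $\deg\varphi_{|C_1} > i_j$ and $\deg\varphi_{|C_s} > d - i_{j+1}$ whenever the chain has length $s\geq 2$.

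Next I would argue by contradiction that $C_b$ has only one component. Write $C_b = C_1\cup\cdots\cup C_s$ and set $x_0 = \varphi_b(s_0)$, $x_s = \varphi_b(s_\infty)$, and $x_l = \varphi_b(q_l)$ at the nodes $q_l = C_l\cap C_{l+1}$. The flow-preserving condition of Definition~\ref{M}(1) makes each $x_l$ a $\CC^*$-fixed point, so $\mu(x_l)\in I$, while the action-class-preserving condition (2) gives $\deg\varphi_{|C_l} = \mu(x_l)-\mu(x_{l-1})$. Assuming $s\geq 2$, the tail inequality on $C_1$ combined with $\mu(x_0)\geq 0$ yields $\mu(x_1) > i_j$; since $i_j < i_{j+1}$ are consecutive elements of $I$, this forces $\mu(x_1) \geq i_{j+1}$. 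The symmetric argument on $C_s$ using $\mu(x_s)\leq d$ gives $\mu(x_{s-1}) \leq i_j$. But $\mu$ is non-decreasing along the chain (by the flow-preserving condition together with Remark~\ref{degree of orbit maps}), so $\mu(x_1) \leq \mu(x_{s-1})$, producing $i_{j+1} \leq \mu(x_1) \leq \mu(x_{s-1}) \leq i_j$, contradicting $i_j < i_{j+1}$. Hence $s = 1$ and $C_b \cong \PP^1$.

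Finally, with $C_b=\PP^1$ and $\varphi_b$ a $\CC^*$-equivariant morphism of degree $d$ into $\PP^N$, its image is the closure of a single $\CC^*$-orbit, and by Remark~\ref{degree of orbit maps} the set $\mu\circ\varphi_b(C_b)$ is an interval of length equal to that degree $d$. Since $\mu$ takes values in $[0,d]$, this forces $\mu(x_0)=0$ and $\mu(x_\infty)=d$, so $\mathrm{Im}(\mu\circ\varphi_b) = [0,d] \supseteq [i_j, i_{j+1}]$ as required. The step I expect to be the main technical obstacle is the tail reduction: one must combine the marked-point weight $1-i_ja$, the node-weight contribution $+1$ in adjunction, the $-2$ from $\omega_{\PP^1}$, and crucially the absence of base locus to extract the clean integer inequality on $\mu$-values in $I$ that drives the contradiction.
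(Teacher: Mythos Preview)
Your ``crucial reduction'' is false, and the entire argument rests on it. You claim that equation~\eqref{degree of weighted stable map} forces $\sum_p m_p = 0$, i.e.\ that the weighted stable map $\varphi_b$ has no base locus. But that identity says only $\deg\varphi_b + \sum_p m_p = d$, where $\deg\varphi_b$ is the degree of the honest morphism obtained after clearing the base locus; there is no reason this degree should equal $d$ for every $b$. Concretely, a closed point of $M_{(j,j+1)}(X)$ parametrizes an orbit closure $\overline{O_x}$ whose endpoints may lie in fixed components $X_{j'}$ with $0 < i_{j'} < d$; for such $x$ the map $\varphi_b$ has degree $\mu(x_\infty)-\mu(x_0) < d$, and the deficit is absorbed precisely as base locus at $s_0$ and $s_\infty$. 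Indeed the very next theorem in the paper (identifying $M_{(j,j+1)}(X)$ with the GIT quotient) computes $m_0=\mu(x_0)$ and $m_\infty=d-\mu(x_\infty)$, which are typically nonzero.

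Once this reduction fails, your tail argument collapses: the inequality $\deg\varphi_{|C_1}+m_0>i_j$ no longer gives $\mu(x_1)>i_j$ unless you already know $m_0=\mu(x_0)$. Establishing that relation is the real content of the lemma, and the paper does it by a local coordinate computation on each component: writing the $\CC^*$-equivariance diagram explicitly shows $a_i = r\alpha_i$ whenever $c_i\neq 0$, and condition~(2) of Definition~\ref{M} pins down $r=1$, yielding $m_0=\mu(x_0)$ and $m_\infty = d-\mu(x_\infty)$. With this in hand the paper's chain argument is essentially the one you wrote, but it concludes only $\mu\circ\varphi_b(C_b)=[\mu(x_0),\mu(x_\infty)]\supseteq[i_j,i_{j+1}]$; your stronger claim that the image is all of $[0,d]$ is again a consequence of the incorrect no-base-locus assumption.
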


\begin{proof}
Let $(C, (s_0, s_\infty ), \cL, e )$ be a weighted stable map such that $C$ is the fibre of the universal family over a closed point $b$ in $\overline{M}_{0, \cA(i_j,i_{j+1})}(X, \beta, a)$, and let $\varphi: C \to X$ be the well-defined map on $C$ induced by $e$.
Assume that $[(C, (s_0, s_\infty ), \cL, e )]$ is fixed by the $\CC^*$-action induced from the action on $X$. Thus for each $t\in \CC^*$ there exists an automorphism $g_t$ of $C$ and an isomorphism $\widetilde{g_t}: \cL \to  g_t^*\cL$ making the following diagram commutative:
 \bean
 \diagram
\cO_{C}^{N+1} \dto^{t\cdot} \rto^{ e } &  \dto^{\widetilde{g_t}} \cL\cong \cO_C(d) \\
  g_t^*\cO_{C}^{N+1}    \rto^{g_t^*e} &  g_t^*\cL\cong \cO_C(d).
\enddiagram \eean
 It follows that  $\mbox{ Im } \varphi$ is $\CC^*$-equivariant, and hence $\varphi (\Supp (\Coker e))$ is a finite subset of the fixed point locus of $X$. Thus $\Supp (\Coker e) \subseteq \{ s_0, s_\infty\}$. Indeed:
 \begin{itemize}
 \item No component of $C$ is mapped by $\varphi$ to the fixed point locus of $X$, due to condition (2) in Definition \ref{M}. (Note that condition (1) in Definition \ref{weighted map spaces}, together with the choice of weights insure that no  component of $C$ is contracted to a point by $\varphi$.) Thus each component can have at most two points mapped to the fixed point locus of $X$. Nodes are such points.
 \item No nodes can be in  $\Supp (\Coker e)$ by the definition of weighted stable maps. \item $\varphi(s_0)$ and $\varphi(s_\infty)$ are fixed points.
 \end{itemize}

Consider now a component $C_j\cong \PP^1$ of $C$. Locally around the point $0\in \PP^1$, the diagram above is of the form

$
\diagram
\CC[z]_{(z)}^{N+1} \dto \rto &  \dto \CC[z]_{(z)} \\
  \CC[z]_{(z)}^{N+1}    \rto &  \CC[z]_{(z)}
\enddiagram $ defined on basis elements by $\diagram
e_i \dto \rto &  \dto z^{\alpha_i}c_i \\
  t^{a_i}e_i    \rto &  t^{a_i}z^{\alpha_i}c_i = (t^rz)^{\alpha_i}c_i
\enddiagram$\\
where $c_i\in \CC$ must be a constant to insure diagram commutativity and $r\in \ZZ$ is determined by the isomorphism $g_t$.
It follows that $a_i= r\alpha_i$ for all indices $i$ for which $c_i\not=0$.

With these data we have $\deg\varphi_*[C_l]= M-m$ where $M=\mbox{ max }\{ \alpha_i\mbox{; } c_i\not=0\}$ and $m=\mbox{ min }\{ \alpha_i\mbox{; } c_i\not=0\}$. On the other hand the points $x_l:=\varphi(0)$ and $x_{l+1}=\varphi(\infty)$ satisfy $\mu(x_l)= \mbox{ min }\{ a_i\mbox{; } c_i\not=0\}$, $\mu(x_{l+1})= \mbox{ max }\{ a_i\mbox{; } c_i\not=0\}$. Thus condition (2) in Definition \ref{M} implies $r=1$, hence also $\mu(x_l)=\dim \cG_{x_l}$ and $d-\mu(x_{l+1})=\dim \cG_{x_{l+1}}$.

We are now ready to prove that $C\cong \PP^1$ and \bea \mu\circ\varphi (C) = [\mu\circ\varphi(s_0), \mu\circ\varphi(s_\infty)] \supseteq [i_j, i_{j+1}]. \eea

Indeed, with the notations from Definition \ref{M},  in the cases $l=0$ and $l=r$  inequalities (\ref{conditions on tails}) imply $\mu(x_0) \leq i_j$ and $\mu(x_\infty)\geq i_{j+1}$ as well as
\bea \mu(x_1) &=& \mu(x_0)+ \deg \varphi_{|C_0} >i_j \mbox{ and } \\
\mu(x_{r}) &=& \mu(x_\infty)+ \deg \varphi_{|C_r} < i_{j+1}.
\eea
 As $i_j$ and $i_{j+1}$ are consecutive elements in $I$, it follows that $\mu(x_1) \geq i_{j+1}$ and  $\mu(x_{r}) \leq i_j$.
 But the map $\varphi$ is flow-preserving and $i_j<i_{j+1}$,  so it must be that $r=1$ so that $x_1=\varphi(s_\infty)$, $x_{r}=\varphi(s_0)$ and $C\cong \PP^1$.


\end{proof}

Recall the following

\begin{theorem} (Theorem 8.3 in \cite{mumford} for the GIT quotient associated to a point in the chamber $(i_j,i_{j+1})$.) Let $i_j< i_{j+1}$ be two consecutive points in  $I$. Then there exists a linearization on the chosen very ample line bundle on $X$ such that the sets of semistable and stable points are given by \bea X^s_{(j,j+1)}=X^{ss}_{(j,j+1)}= \{ x\in X\mbox{; } (i_j, i_{j+1}) \subset \mu(O_x)\}. \eea
\end{theorem}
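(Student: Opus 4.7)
The statement is the classical Hilbert–Mumford description of (semi)stability for a $\CC^*$–action, specialized to a single chamber of the moment map. The plan is to reduce it to the numerical criterion for one–parameter subgroups and then translate the linearization by a character so that the chosen chamber sits at level zero.

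First, I would set up the dictionary between linearizations and moment-map normalizations. The chosen equivariant linearization on $\cL$ gives a $\CC^*$–action on the total space of $\cL$ whose weight at each fixed point $x\in X^{\CC^*}$ equals (up to the standard additive normalization from the Hermitian metric) the value $\mu(x)$. Two basic operations are available: replacing $\cL$ by $\cL^{\otimes N}$ multiplies every weight, hence $\mu$, by $N$; tensoring with the character $t\mapsto t^{-n}$ shifts every weight uniformly by $n$. Combining the two, any rational shift $r\in\QQ$ of $\mu$ can be realized by a genuine equivariant linearization on a suitable power of $\cL$, and (semi)stability is unchanged when passing between $\cL$ and $\cL^{\otimes N}$.

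Next I would apply the Hilbert–Mumford numerical criterion in the one–parameter form: for a linearization $\cL'$ and a point $x\in X$ with limits $x_0=\lim_{t\to 0}t\cdot x$ and $x_\infty=\lim_{t\to\infty}t\cdot x$, the point $x$ is semistable iff the $\CC^*$–weight on $\cL'_{x_0}$ is $\le 0$ and the weight on $\cL'_{x_\infty}$ is $\ge 0$, and stable iff both inequalities are strict. Picking any rational $r\in(i_j,i_{j+1})$ and implementing the shift by a linearization on some $\cL^{\otimes N}$, the criterion reads
\[
x\in X^{ss}_{(j,j+1)}\iff \mu(x_0)\le r\le \mu(x_\infty),\qquad x\in X^{s}_{(j,j+1)}\iff \mu(x_0)<r<\mu(x_\infty).
\]

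Finally I would match the two sets to the desired characterization. By \lemref{classes of orbit maps}, the endpoints $\mu(x_0)$ and $\mu(x_\infty)$ lie in $I$ and describe $\overline{\mu(O_x)}=[\mu(x_0),\mu(x_\infty)]$. Because $r\in(i_j,i_{j+1})$ while $i_j,i_{j+1}$ are \emph{consecutive} elements of $I$, the conditions $\mu(x_0)\le r$ and $\mu(x_0)<r$ both collapse to $\mu(x_0)\le i_j$, and $\mu(x_\infty)\ge r$ and $\mu(x_\infty)>r$ both collapse to $\mu(x_\infty)\ge i_{j+1}$. Hence $X^{ss}_{(j,j+1)}=X^{s}_{(j,j+1)}=\{x\in X:(i_j,i_{j+1})\subset\mu(\overline{O_x})\}$, as claimed. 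The only place a subtlety enters is the first step — ensuring that a rational shift is actually induced by an equivariant linearization on some tensor power of $\cL$ — and this is the piece that would require the most care to write out rigorously.
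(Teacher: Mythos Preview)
The paper does not supply its own proof of this statement: it is quoted verbatim as Theorem 8.3 from \cite{mumford} and used as a black box. So there is no ``paper's proof'' to compare against; your task was effectively to reconstruct the classical argument.

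Your sketch is correct and is essentially the standard proof. The reduction to the Hilbert--Mumford numerical criterion is the right move, and since the group is already $\CC^*$ there is only one one-parameter subgroup (up to sign and powers), so the criterion reduces exactly to the two weight inequalities at $x_0$ and $x_\infty$ that you wrote. The observation that the strict and non-strict inequalities collapse because $\mu(x_0),\mu(x_\infty)\in I$ while $r$ lies strictly between consecutive elements of $I$ is exactly the point that forces $X^s=X^{ss}$ on a chamber. One small remark: you cite \lemref{classes of orbit maps} for the fact that $\mu(x_0),\mu(x_\infty)\in I$, but that lemma concerns the product moment map $\mu^m:\prod_i\PP^{n_i}\to\RR^m$. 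The relevant fact here is that fixed points of $X$ are fixed points of the ambient $\PP^N$, hence land in $I$ by the definition in Notation~\ref{I}; this follows from Remark~\ref{degree of orbit maps} applied to the single projective space $\PP^N$. Your flagged subtlety about realizing a rational shift by an honest linearization on a tensor power is genuine but routine: choose $N$ to clear the denominator of $r$ and twist $\cL^{\otimes N}$ by the character $t\mapsto t^{-Nr}$.
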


 Thus the previous Lemma we have proven that any weighted stable map $\varphi$ parametrized by a point in $M_{(j, j+1)}(X)$ satisfies $\varphi(C) \subset X^s_{(j,j+1)}$.

\begin{theorem} \label{consecutive}
If $i_j < i_{j+1}$ are consecutive elements of $I$, then $M_{(j,j+1)}(X)$ is isomorphic to the stacky quotient $[X^s_{(j,j+1)}/\CC^*]$ associated to the GIT quotient $X^s_{(j,j+1)}//\CC^*$.
\end{theorem}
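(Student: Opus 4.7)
The plan is to identify the functor of points of $M_{(j,j+1)}(X)$ with that of $[X^s_{(j,j+1)}/\CC^*]$, whose $B$-points are principal $\CC^*$-bundles $P\to B$ together with a $\CC^*$-equivariant morphism $\phi\colon P\to X^s_{(j,j+1)}$.

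First, I would construct a morphism $M_{(j,j+1)}(X)\to [X^s_{(j,j+1)}/\CC^*]$. Given a family $[C\to B,(s_0,s_\infty),\cL,e]$, the $\CC^*$-fixed condition lifts the $\CC^*$-action on $X$ to a relative $\CC^*$-action on $C/B$ fixing both sections and intertwining with the induced map $\varphi\colon C\to X$; the local weight computation giving $r=1$ in the proof of \lemref{bottom_line} rigidifies this lift and makes the induced fibrewise action on $\PP^1$ effective. By the same lemma every fibre of $C\to B$ is a single $\PP^1$, so $P:=C\setminus(s_0(B)\cup s_\infty(B))$ is a principal $\CC^*$-bundle, and $\phi:=\varphi|_P$ is $\CC^*$-equivariant with image in $X^s_{(j,j+1)}$ since fibrewise $\mu\circ\varphi$ covers $[i_j,i_{j+1}]$.

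Conversely, from $(P,\phi)$ I would build $C:=(P\times\PP^1)/\CC^*$ with $\CC^*$ acting diagonally so that $[0\colon 1]$ and $[1\colon 0]$ descend to fixed sections $s_0,s_\infty$, and extend $\phi$ to $\varphi\colon C\to X$ via the orbit map on each fibre. Set $\cL:=\varphi^*\cO_{\PP^N}(1)\otimes\cO_C(m_0s_0+m_\infty s_\infty)$ with $m_0:=\mu(\varphi\circ s_0)$ and $m_\infty:=d-\mu(\varphi\circ s_\infty)$, which are locally constant integer-valued functions on $B$ by \lemref{classes of orbit maps}(a); and let $e$ be the natural composition $\cO_C^{N+1}\twoheadrightarrow \varphi^*\cO_{\PP^N}(1)\hookrightarrow\cL$. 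The stability, cokernel, flow-preserving and action-class-preserving conditions of Definitions~\ref{weighted map spaces} and \ref{M} then follow from the same numerics as in the proof of \lemref{bottom_line}.

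The main obstacle will be showing that the two constructions are naturally inverse as pseudo-functors. One direction is tautological: removing the fixed sections of $(P\times\PP^1)/\CC^*$ recovers $P$, and $\varphi$ restricts to $\phi$. For the other, recovering $(\cL,e)$ from $\varphi$ and the sections requires $\Supp(\Coker e)\subseteq\{s_0,s_\infty\}$---already forced by the $\CC^*$-equivariance argument in \lemref{bottom_line}---together with the degree identity \Ref{degree of weighted stable map}; these pin down $\cL$ as $\varphi^*\cO_{\PP^N}(1)$ twisted by the correct boundary divisor and $e$ as the only compatible surjection, up to the inherent $\CC^*$ automorphism of $\cL$. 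Automorphism groups match as a bonus: an automorphism of $[C,(s_0,s_\infty),\cL,e]$ is an element of $\CC^*=\mbox{Aut}(\PP^1;0,\infty)$ preserving $\varphi$, i.e.\ $\Stab(x)$ for $x$ generic in the orbit, which agrees with the inertia of $[X^s_{(j,j+1)}/\CC^*]$ at the image point.
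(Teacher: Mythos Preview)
Your approach is essentially the paper's, only with the two directions swapped and phrased via the functor of points rather than the universal object. Both arguments identify the universal family over $M_{(j,j+1)}(X)$ with the fibrewise $\PP^1$-compactification of a principal $\CC^*$-bundle equipped with an equivariant map to $X^s_{(j,j+1)}$, and both rely on \lemref{bottom_line} to know the fibres are irreducible. The paper builds $\overline{X^s}_{(j,j+1)}\to[X^s_{(j,j+1)}/\CC^*]$ first by gluing two copies of $[(X^s_{(j,j+1)}\times\AA^1)/\CC^*]$, then inverts via the evaluation map on the universal family; you do the reverse.

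There is, however, a genuine gap in your converse construction. You claim that the fibrewise orbit maps glue to a \emph{morphism} $\varphi\colon C\to X$, and then set $\cL:=\varphi^*\cO_{\PP^N}(1)\otimes\cO_C(m_0s_0+m_\infty s_\infty)$ with $m_0,m_\infty$ locally constant. Neither holds in general: the limit $\lim_{t\to 0}t\cdot\phi(p)$ can jump between different components $X_l$ of the fixed locus as $p$ varies in $P$ (already for $\PP^2$ with weights $(0,1,2)$ and the chamber $(1,2)$, the limit at $0$ is $[1{:}0{:}0]$ generically but $[0{:}1{:}0]$ on the locus $x=0$). Consequently $\varphi$ is only a \emph{rational} map to $X$, the fibrewise pullback $\varphi_b^*\cO_{\PP^N}(1)$ does not assemble into a line bundle on $C$, and $m_0=\mu(\varphi_b(s_0(b)))$ is upper-semicontinuous rather than locally constant. \lemref{classes of orbit maps}(a) only tells you the values lie in a discrete set; it does not give continuity of the assignment $b\mapsto\varphi_b(s_0(b))$.

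The fix is exactly what the paper does: treat $\varphi$ as rational from the outset and construct $(\cL,e)$ directly from the $\CC^*$-linearisation on $\cO_{\PP^N}(1)$. On each chart $[(P\times\AA^1)/\CC^*]$ the homogeneous coordinates $z_l$ (of weight $a_l$) descend, after multiplying by the appropriate power of the $\AA^1$-coordinate, to sections of a single degree-$d$ line bundle $\cL$ on the quotient; the resulting $e\colon\cO^{N+1}\to\cL$ has cokernel supported on $s_0\cup s_\infty$ with the correct dimensions \emph{automatically}, without ever needing $\varphi$ to be a morphism or $m_0,m_\infty$ to be locally constant. The paper then verifies the stability inequality by the semicontinuity argument in its equations (\ref{degree of fibre in $X_{i_1,i_2}$})--(\ref{weights on sections}). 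Once you replace your formula for $\cL$ by this intrinsic construction, the rest of your outline goes through.
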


\begin{proof} Step 1. We build a map $[X^s_{(j,j+1)}/\CC^*]\to M_{(j,j+1)}(X)$. To construct a family of weighted stable maps over $[X^s_{(j,j+1)}/\CC^*]$, we first compactify  $X^s_{(j,j+1)}$ by adding zero and infinity sections as follows: We consider the actions of $\CC^*$ on $\AA^1$ of weights 1 and -1, respectively:
\bea \CC^*\times \AA^1 \to \AA^1 &\mbox{ and }& \CC^*\times \AA^1 \to \AA^1, \\
(t, u) \to t u &\mbox{ and }& (t, u) \to t^{-1} u. \eea
We define $\overline{X^s}_{(j,j+1)}$ to be the stack obtained by gluing the two stacky quotients  $[(X^s_{(j,j+1)}\times \AA^1)/\CC^*]$ along $X^s_{(j,j+1)}= [(X^s_{(j,j+1)}\times \CC^*)/\CC^*]$. The natural map $\pi: \overline{X^s}_{(j,j+1)} \to [X^s_{(j,j+1)}/\CC^*]$ admits two sections $s_0$ and $s_\infty$ induced by
the zero-section $[X^s_{(j,j+1)}/\CC^*] \to [(X^s_{(j,j+1)}\times \AA^1)/\CC^*]$ in the two patches above.

           The inclusion $X^s_{(j,j+1)}\hookrightarrow X$ gives a rational map $\varphi: \overline{X^s}_{(j,j+1)} \dashrightarrow X$ associated to a morphism of vector bundles $e: \cO^{N+1} \to \cL$ over $\overline{X^s}_{(j,j+1)}$. The line bundle $\cL$ is the pull-back of $\cO_{\PP^N}(1)$ and is of degree $d$, the degree of the action map $t\to t\cdot x$ for $x$ generic.
           Condition (1) in Definition \ref{weighted map spaces} is satisfied due to the inequality $i_j<i_{j+1}$.  The support of $\Coker e$ is included in  $\mbox{ Im } s_0 \bigcup \mbox{ Im } s_\infty$  (Condition (2)).

            When restricted over each fibre $C_b=\PP^1$ of $\pi$, the rational map $\varphi$ is of the form $t \to t \cdot x$ and extends to a map $\varphi_b=f_x: \PP^1 \to X$, so that Lemma \ref{degree of orbit maps} and equation \ref{degree of weighted stable map} together imply
           \bean  \label{degree of fibre in $X_{i_1,i_2}$}  \dim \Coker e_{s_0(b)} +   \dim \Coker e_{s_\infty(b)} + \mu (x_\infty) - \mu (x_0) = d, \eean
where $x_0=\varphi_b(s_0(b))$, $x_\infty=\varphi_b(s_\infty(b))$ and therefore $\mbox{ Im } \mu\circ \varphi_b = [ \mu(x_0), \mu(x_\infty)]$.

We claim that
\bean \label{weights on sections} \dim \Coker e_{s_0(b)}= \mu(x_0) \mbox{ and } \dim \Coker e_{s_\infty(b)} = d- \mu(x_\infty). \eean

Consider fibres $C_b=\PP^1$  of $\pi$ such that $\Coker e_{s_0(b)} \not=0$. The generic fibres among these satisfy $\Coker e_{s_\infty(b)}=0$ and $\mu(x_\infty)=d$, and hence $\dim \Coker e_{s_0(b)}= \mu(x_0)$ by equation \ref{degree of fibre in $X_{i_1,i_2}$}. Similarly, the generic fibres among those where $\Coker e_{s_\infty(b)} \not=0$ satisfy $\Coker e_{s_0(b)}=0$ and $\dim \Coker e_{s_\infty(b)} = d- \mu(x_\infty)$. Since $\dim \Coker e_{s_\infty}$ and $\dim \Coker e_{s_\infty}$ are upper semi-continuous, it follows that for any fibre $C_b$ we have \bea  \dim \Coker e_{s_0(b)} \geq  \mu(x_0) \mbox{ and } \dim \Coker e_{s_\infty(b)} \geq d- \mu(x_\infty). \eea
 In conjunction with equation \ref{degree of fibre in $X_{i_1,i_2}$}, this yields equations \ref{weights on sections}.

Finally, we recall the condition $(i_j, i_{j+1}) \subset \mu(O_x)$ for $x\in X^s_{(j,j+1)}$. On the other hand, for $x\in \mbox{ Im } \varphi$, $\mu(O_x) = ( \mu(x_0), \mu(x_\infty))$ and hence \bea  \mu(x_0) \leq i_j < i_{j+1} \leq \mu(x_\infty). \eea
Together with equation \ref{weights on sections}, this suffices to check Condition (3) from Definition \ref{weighted map spaces}.

We have proven the existence of a map $[X^s_{(j,j+1)}/\CC^*]\to M_{(j,j+1)}(X)$, which is part of a fibre square

 \bea \diagram
\overline{X^s}_{(j,j+1)} \dto^{} \rto^{ \phi } &  \dto^{} \cU_{(j,j+1)}(X) \\
 [X^s_{(j,j+1)}/\CC^*] \rto^{} & M_{(j,j+1)}(X)
\enddiagram \eea

Step 2.  The rational map $\cU_{(j,j+1)}(X) \dashrightarrow X$ corresponds to a well defined map  $\cU_{(j,j+1)}(X)\setminus (\mbox{ Im } s_0\bigcup \mbox{ Im } s_\infty ) \to X^s_{(j,j+1)} $ which extends to $\cU_{(j,j+1)}(X) \to \overline{X^s}_{(j,j+1)}$, the inverse of the map $\phi$ above.

Indeed, consider a family of weighted stable maps $[C \to B, (s_0, s_\infty ), \cL, e ]$ given by a map  $B \to M_{(j,j+1)}(X)$, and let $\varphi: C \dashrightarrow X$ be the corresponding rational map. Let $q\in C_b$ be a point where $\varphi$ is not defined. The restriction $\varphi_{|C_b}$ extends to a map $\varphi_b: C_b\to X$. Since $[C \to B, (s_0, s_\infty ), \cL, e ]$ is fixed by $\CC^*$, it follows that the $\CC^*$--orbit $O_{\varphi_b(q)} \subset \mbox{ Im } \varphi_b$, and  $\varphi_b^{-1}(O_{\varphi_b(q)}) \subseteq \mbox{ Supp} \Coker e_b$. But $\Coker e_b$ is a skyscraper sheaf, so $\varphi_b(q)$ must be a fixed point. Hence $\cU_{(j,j+1)}(X)\setminus (\mbox{ Im } s_0\bigcup \mbox{ Im } s_\infty ) \to X$  is well defined, and the image is in $X^s_{(j,j+1)} $ by Lemma \ref{bottom_line}.

The map $\cU_{(j,j+1)}(X)\setminus (\mbox{ Im } s_0\bigcup \mbox{ Im } s_\infty ) \to X^s_{(j,j+1)} $ is an isomorphism of $\CC^*$-- bundles which extends canonically to an isomorphism of $\PP^1$--bundles. By naturality, this is the inverse of the map $\phi$ constructed above.
\end{proof}

\begin{remark}
 By applying the arguments of Lemma \ref{bottom_line} more generally for $j<j'$, we can characterize a closed point $[\varphi: C \to X, s_0, s_\infty]$ in $M_{(j,j')}(X)$ as follows: $C$ is a chain of $\PP^1$-s, on each of which $\varphi $ is the action map $t\to t\cdot x$ for some $x\in X$, and such that, via the moment map $\mu$,
 \begin{itemize}
 \item $\mu (\varphi(s_0)) \leq i_j$ and $\mu (\varphi(s_\infty)) \geq i_{j'}$;
 \item all the nodes of $C$ are mapped between $i_j$ and $i_{j'}$.
 \end{itemize}
\end{remark}

\begin{corollary}
 $M_{(j,j+1)}(\PP^N)$ is a closed and open substack of $\overline{M}_{0, \cA(i_j,i_{j+1})}(\PP^N, d, a)^{\CC^*}$.
 \end{corollary}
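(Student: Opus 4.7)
The plan is to combine Theorem \ref{consecutive} (applied to $X=\PP^N$) with the basic fact that the fixed locus of a separated DM stack under a $\CC^*$-action is a disjoint union of closed-and-open components. Hence it suffices to exhibit $M_{(j,j+1)}(\PP^N)$ as a union of such connected components, i.e.\ to show the inclusion is simultaneously closed and open.

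For \emph{closedness}, Theorem \ref{consecutive} gives an isomorphism $M_{(j,j+1)}(\PP^N) \cong [(\PP^N)^s_{(j,j+1)}/\CC^*]$. Because $i_j<i_{j+1}$ are consecutive values in $I$, for the associated linearization all semistable points coincide with stable points, so the quotient is a proper Deligne--Mumford stack. The inclusion into $\overline{M}_{0, \cA(i_j,i_{j+1})}(\PP^N, d, a)^{\CC^*}$ is a monomorphism by construction, and a proper monomorphism into a separated DM stack is automatically a closed immersion.

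For \emph{openness}, I would argue that conditions (1) and (2) of Definition \ref{M} cut out a locally closed substack of the fixed locus which is in fact open. On any connected component of the fixed locus $\overline{M}_{0, \cA(i_j,i_{j+1})}(\PP^N, d, a)^{\CC^*}$, the discrete invariants of a $\CC^*$-fixed weighted stable map, namely the dual graph of the domain $C$, the curve classes $\varphi_*[C_\ell]$ of each irreducible component, and the fixed-point strata $P_i\subset\PP^N$ containing the images of the marked points and the nodes, are locally constant: being fixed by $\CC^*$, each component maps into an orbit closure whose endpoints lie in specified $P_i$'s, and these discrete data cannot jump in a connected equivariant family. Since the flow-preserving and action-class-preserving conditions are determined purely by this discrete data, they propagate from a single point to the whole connected component. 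Combined with closedness, this yields the claim.

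The main obstacle is the rigorous justification that the combinatorial type of a $\CC^*$-fixed weighted stable map is locally constant on the fixed locus. The standard argument for ordinary stable maps shows that node-smoothing directions correspond to tangent vectors that move the fixed locus, so that deforming within the fixed locus preserves the dual graph and the target fixed-point strata of the special points; this argument adapts to the weighted setting provided one also tracks that the support of $\Coker e$ remains on the specified sections $s_0,s_\infty$, which is exactly the content of condition (2) of Definition \ref{weighted map spaces} being both open and closed on the fixed locus.
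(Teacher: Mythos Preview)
Your closedness argument is essentially the paper's: Theorem \ref{consecutive} identifies $M_{(j,j+1)}(\PP^N)$ with the proper stack $[(\PP^N)^s_{(j,j+1)}/\CC^*]$, hence it is closed in the ambient fixed locus.

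For openness, your route works but is more laborious than the paper's, and you correctly flag the delicate point yourself. The paper avoids the need to prove that the full combinatorial type is locally constant on the entire fixed locus. Instead it uses a feature specific to the target $\PP^N$: the moduli space $\overline{M}_{0,\cA(i_j,i_{j+1})}(\PP^N,d,a)$ is smooth, so its $\CC^*$-fixed locus is smooth, and therefore each connected component is irreducible. Given this, one only needs to check that a \emph{generic} point $p\in M_{(j,j+1)}(\PP^N)$ has an open neighbourhood in the fixed locus satisfying conditions (1)--(2) of Definition \ref{M}; closedness plus containing a nonempty open subset of an irreducible component forces $M_{(j,j+1)}(\PP^N)$ to equal that component. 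At a generic $p$ the domain is $\PP^1$ (Lemma \ref{bottom_line}), which is an open condition in the moduli space, and the remaining discrete data are easy to control nearby.

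Your approach, by contrast, tries to verify local constancy of the decorated-graph data at \emph{every} point of the fixed locus, which in principle requires ruling out equivariant node-smoothings (weight-zero directions) in the weighted-map setting. This can be done here because every point of $M_{(j,j+1)}(\PP^N)$ already has irreducible domain, but you do not need to engage with that analysis at all once you invoke smoothness and irreducibility. In short: your argument is sound but misses the shortcut that smoothness of the ambient space (for target $\PP^N$) provides.
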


\begin{proof} The proof above implies that $M_{(j,j+1)}(X)\cong [{X^s}_{(j,j+1)}/\CC^*]$ is indeed a closed substack of $\overline{M}_{0, \cA(i_j,i_{j+1})}(X, \beta, a)$. Furthermore, we know  $\overline{M}_{0, \cA(i_j,i_{j+1})}(\PP^N, d, a)$  is a smooth stack and hence $\overline{M}_{0, \cA(i_j,i_{j+1})}(\PP^N, d, a)^{\CC^*}$ is smooth, its connected components are irreducible. It remains to note that for a generic point $p$ in $M_{(j,j+1)}(\PP^N)$, all points in an open neighbourhood of $p$ in $\overline{M}_{0, \cA(i_j,i_{j+1})}(\PP^N, d, a)^{\CC^*}$ represent weighted stable maps which also satisfy conditions (1)-(2) in Definition \ref{M}.

\end{proof}



\begin{notation} \label{action_strata} For each element $i_j\in I$, we denote by $X_j:=\mu^{-1}(i_j)^{\CC^*}$ the corresponding component of the fixed point locus of $X$, and
\bea  X_j^{-} := \{ x\in X \mbox{; } \lim_{t\to \infty } t\cdot x \in X_j\} \mbox{ and } X_j^{+} := \{ x\in X \mbox{; } \lim_{t\to 0 } t\cdot x \in X_j\}.   \eea
The natural maps $\pi_{j}^- :X_j^{-} \to X_j$ and $\pi_{j}^+ :X_j^{+} \to X_j$, obtained by taking limits $x\to \lim_{t\to 0} t\cdot x$ and  $x\to \lim_{t\to \infty} t\cdot x$, are affine bundles over $X_j$.
\end{notation}

It is known that the normal bundles satisfy $\cN_{X^-_j|X}=\pi_{j}^{- *}\cN_{X_j|X_j^+}$ and $\cN_{X^+_j|X}=\pi_{j}^{+ *}\cN_{X_j|X_j^-}$. The $\CC^*$ action on $X$ induces two actions on $\cN_{X^-_j|X}$: one which makes the projection $\cN_{X^-_j|X} \to X^-_j$ equivariant, and a second one induced from the action on $\cN_{X_j|X_j^+}$. The first action allows $\cN_{X^-_j|X}$ to descend to a bundle $[\cN_{X^-_j)|X}/\CC^*]$ over $[(X^-_j\setminus X_j)/\CC^*]$. In the second case, $\CC^*$ acts on each fibre, leading to a decomposition of the bundle by eigenvalues (weights). This decomposition descends to $[\cN_{X^-_j|X}/\CC^*]$.
Similarly for $\cN_{X^+_j|X}$.

Next we describe morphisms between spaces $M_{(j,j')}(X)$ as weighted blow-ups.

The local structure of a weighted blow-up $\widetilde{Z} \to Z$ of smooth Deligne-Mumford stacks has been described in detail in \cite{noi5}, section 2, and we will employ the notations introduced there. In particular we recall the associated affine
fibration   \bea A:=\Spec \left(\oplus_{n\geq 0} \cI_n/\cI_{n+1}\right) \to Y \eea
for a weighted blow-up $\widetilde{Z} \to Z$ along the locally embedded smooth stack $Y$, and the filtration $\{\cI_n\}_{n\geq 1}$ of the ideal of $Y$ in $Z$. Then the exceptional divisor $\widetilde{Y}=P^w(A)$, a weighted projective fibration over $Y$.

\begin{theorem}
Assume that $i_j< i_{j+1}  <i_{j+2}$ are three consecutive elements of $I$. Then the maps \bea  \diagram    & M_{(j,j+2)}(X) \dlto \drto \\  M_{(j,j+1)}(X) && M_{(j+1,j+2)}(X) \enddiagram   \eea
are weighted blow-ups.
The blow-up loci are
\bea  Y_{(j,j+1)}^- &=& [ (X_{{j+1}}^- \setminus X_{j+1}) / \CC^*] \hookrightarrow M_{(j,j+1)}(X) \mbox{ and } \\  Y_{(j+1,j+2)}^+ &=& [( X_{{j+1}}^+ \setminus X_{j+1}) / \CC^*] \hookrightarrow M_{(j+1,j+2)}(X), \eea
and the exceptional divisor $E_{(j,j+2)}=Y_{(j,j+1)}^-\times_{X}Y_{(j+1,j+2)}^+$ is a weighted projective fibration over $Y_{(j,j+1)}^-$ (as defined in \cite{noi5}):
\bea  E_{(j,j+2)} = [(A_{(j,j+1)}\setminus Y_{(j,j+1)}^-)/\CC^*],  \eea
 where $A_{(j,j+1)}$ is an affine bundle over $Y_{(j,j+1)}^-$ such that the normal bundle of its zero section $\cN_{Y_{(j,j+1)}^-|A_{(j,j+1)}}$ is obtained by descent from $\pi_{{j+1}}^{-*}\cN_{X_{{j+1}}|X_{{j+1}}^+}$ and the weights of the action on $\cN_{Y_{(j,j+1)}^-|A_{(j,j+1)}}$ are induced from the $\CC^*$--action on $X$.

The  coarse moduli spaces of $M_{(j,j+2)}(X)$ is the fiber product over the following variation of GIT diagram
\bea  \diagram     X^s_{(j,j+1)}//\CC^* \drto && X^s_{(j+1,j+2)}//\CC^* \dlto \\ &   (X^s_{(j,j+1)}\bigcup X_{{j+1}} \bigcup X^s_{(j+1,j+2)})//\CC^*. \enddiagram \eea

\end{theorem}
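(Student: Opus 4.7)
The plan is to use an explicit point-wise description of $M_{(j,j+2)}(X)$ (analogous to the remark following Theorem~\ref{consecutive}), combined with the GIT identification of Theorem~\ref{consecutive}, and match the resulting forgetful morphisms with the local model of a weighted blow-up from \cite{noi5}.

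First I would characterize the points of $M_{(j,j+2)}(X)$. Since $i_{j+1}$ is the unique element of $I$ strictly between $i_j$ and $i_{j+2}$, the flow-preserving and action-class-preserving conditions of Definition~\ref{M} force a parametrized weighted stable map to consist of a chain of at most two components: either a single orbit $O_x$ with $x \in X^s_{(j,j+1)} \cap X^s_{(j+1,j+2)}$, or a pair of orbits $O_{x_1}\cup O_{x_2}$ glued at a node mapping into $X_{j+1}$, with $x_1 \in X^-_{j+1}\setminus X_{j+1}$ and $x_2 \in X^+_{j+1}\setminus X_{j+1}$. A semicontinuity argument in the style of the proof of Theorem~\ref{consecutive} determines the multiplicities at $s_0,s_\infty$ uniquely, so these exhaust $M_{(j,j+2)}(X)$.

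Next I would construct the two forgetful maps. On the open locus of single-orbit configurations, the inclusion $X^s_{(j,j+1)}\cap X^s_{(j+1,j+2)} \hookrightarrow X^s_{(j,j+1)}$ descends via Theorem~\ref{consecutive} to an open embedding onto $M_{(j,j+1)}(X)\setminus Y^-_{(j,j+1)}$, and analogously for $M_{(j+1,j+2)}(X)$. Over a two-component chain $O_{x_1}\cup O_{x_2}$ with node at $p \in X_{j+1}$, the morphism $M_{(j,j+2)}(X) \to M_{(j,j+1)}(X)$ retains only the first orbit $O_{x_1}$ (and dually for the other map), whose class lies in $Y^-_{(j,j+1)}$. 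I would verify that these pointwise assignments arise from a morphism of families by contracting the $\PP^1$ corresponding to the second component in the universal family $\cU_{(j,j+2)}(X)$, replacing it by a base point of weight $d-\mu(x_{2,\infty})$ at $s_\infty$, and invoking the universal property of $M_{(j,j+1)}(X)$. The fiber of $M_{(j,j+2)}(X)\to M_{(j,j+1)}(X)$ over a point of $Y^-_{(j,j+1)}$ represented by an orbit ending at $p \in X_{j+1}$ consists of the choices of a second orbit starting at $p$ and flowing beyond $i_{j+2}$; such orbits are parametrized by a weighted projectivization of $\cN_{X_{j+1}|X_{j+1}^+}$ at $p$, with weights induced from the $\CC^*$-action on $X$. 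Globally this gives the claimed description $E_{(j,j+2)} = Y^-_{(j,j+1)} \times_X Y^+_{(j+1,j+2)}$, and the quotient of the affine bundle $A_{(j,j+1)}$ matches the weighted projective fibration of \cite{noi5}, section~2.

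Finally, for the coarse moduli statement, I would combine the identifications from Theorem~\ref{consecutive} with the Mumford-type wall-crossing theorem quoted above: at the wall through $i_{j+1}$, the merged semistable locus is $X^s_{(j,j+1)} \cup X_{j+1} \cup X^s_{(j+1,j+2)}$, and the two forgetful morphisms descend to the two sides of the VGIT diagram. The fiber-product identification then follows from the universal property of fiber products in coarse moduli. I expect the main technical hurdle to be verifying that the formal local model of the weighted blow-up — in particular the associated graded $\bigoplus_n \cI_n/\cI_{n+1}$ of the ideal of $Y^-_{(j,j+1)}$ in $M_{(j,j+1)}(X)$ — matches the weight decomposition of $\pi_{j+1}^{-*}\cN_{X_{j+1}|X_{j+1}^+}$ on the nose; this is what carries the weighted blow-up structure at the stacky (not merely coarse) level, and it is precisely where condition~(2) of Definition~\ref{M} ensures that the gluing at $X_{j+1}$ is governed by the same $\CC^*$-weights that control the blow-up.
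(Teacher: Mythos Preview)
Your proposal is broadly correct and identifies all the right objects, but it takes a different route from the paper, and the difference matters at exactly the point you flag as the ``main technical hurdle.''

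The paper does \emph{not} build the forgetful maps by hand from a point-wise description and then verify the weighted blow-up local model directly. Instead, it observes that $M_{(j,j+2)}(X) \to M_{(j,j+1)}(X)$ is the restriction to the flow- and action-class-preserving fixed locus of the forgetful morphism
\[
\overline{M}_{0,\cA(i_j,i_{j+2})}(X,\beta,a)\;\longrightarrow\;\overline{M}_{0,\cA(i_j,i_{j+1})}(X,\beta,a)
\]
between full moduli spaces of weighted stable maps. For the target $\PP^n$ these morphisms already carry the weighted blow-up structure established in \cite{noi3},\cite{noi5}: the relevant boundary divisor is the image of the gluing map
\[
\overline{M}_{0,(a_0,1)}(\PP^n,i_{j+1},a)\times_{\PP^n}\overline{M}_{0,(1,a_\infty)}(\PP^n,d-i_{j+1},a)\;\hookrightarrow\;\overline{M}_{0,\cA(i_j,i_{j+2})}(\PP^n,d,a),
\]
and the paper then intersects this with $M_{(j,j+2)}(X)$ to read off $E_{(j,j+2)}$ and its images $Y^\pm$. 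The identification $Y^-_{(j,j+1)}\cong[(X^-_{j+1}\setminus X_{j+1})/\CC^*]$ is obtained not from a semicontinuity argument but from the explicit weight formula $\dim\Coker e_{s_\infty}=d-\mu(\varphi(s_\infty))$ (equation~(\ref{weights on sections})) proved inside Theorem~\ref{consecutive}, applied to the embedding $\overline{M}_{0,(a_0,1)}(\PP^n,i_{j+1},a)\hookrightarrow\overline{M}_{0,\cA(i_j,i_{j+1})}(\PP^n,d,a)$ which shifts the multiplicity at $s_\infty$ by $d-i_{j+1}$.

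What this buys: the ambient-space approach \emph{inherits} the weighted blow-up structure rather than having to check the ideal filtration $\{\cI_n\}$ from scratch, so the hurdle you identify simply does not arise. Your approach would work too, but you would have to carry out that local computation honestly; the affine-bundle description of $A_{(j,j+1)}$ with normal bundle $\pi_{j+1}^{-*}\cN_{X_{j+1}|X_{j+1}^+}$ does not follow from knowing the fibers set-theoretically. The paper gets it for free from the known affine-bundle structure of $X^+_{j+1}\to X_{j+1}$ pulled back along the fiber product.
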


\begin{proof}
The map $M_{(j,j+2)}(X) \to M_{(j,j+1)}(X)$ is obtained by restricting the map between moduli spaces of weighted stable maps
$p: \overline{M}_{0, \cA(i_j,i_{j+2})}(X, \beta) \to \overline{M}_{0, \cA(i_j,i_{j+1})}(X, \beta)$ to those components of the fixed point loci satisfying conditions (1)-(2) in Definition \ref{M}. Indeed, these conditions are compatible with $p$ so $p(M_{(j,j+2)}(X))=M_{(j,j+1)}(X)$.

 Consider first the target $\PP^n$. We let $a_0=\frac{d-i_j}{d}$, $a_\infty=\frac{i_{j+2}}{d}$ and $a=\frac{1}{d}$. The boundary divisor of $\overline{M}_{0, \cA(i_j,i_{j+2})}(\PP^n, d,a)$ having nontrivial intersection with $M_{(j,j+2)}(\PP^n)$ is the image of the embedding \bea  \iota : \ol{M}_{0, (a_0, 1)}(\PP^n, i_{j+1}, a)  \times_{\PP^n}   \ol{M}_{0, (1, a_{\infty})}(\PP^n, d-i_{j+1}, a) \to \overline{M}_{0, \cA(i_j,i_{j+2})}(\PP^n, d,a).  \eea
We denote the non-trivial intersection of $\mbox{ Im } \iota$ with $M_{(j,j+2)}(X)$  by $E_{(j,j+2)}$. Its image in $M_{(j,j+1)}(X)$ is thus $Y_{(j,j+1)}^-:=M_{(j,j+1)}(X) \bigcap  \ol{M}_{0, (a_0, 1)}(\PP^n, i_{j+1}, a)$. Here the embedding
\bea  \ol{M}_{0, (a_0, 1)}(\PP^n, i_{j+1}, a) \hookrightarrow \overline{M}_{0, \cA(i_j,i_{j+1})}(\PP^n, d,a)\eea is given as follows: a tuple $(C, \{s_0, s_\infty\}, \cL, e)$ representing a point in $\ol{M}_{0, (a_0, 1)}(\PP^n, i_{j+1}, a)$ gives a tuple $(C, \{s_0, s_\infty\}, \cL\otimes \cO(d-i_{j+1}), e')$ where $e'$ is the composition of $e$ with $\cL \to \cL\otimes \cO(d-i_{j+1})$, and the last morphism has a zero with multiplicity $d-i_{j+1}$ at $s_\infty$. If $(C, \{s_0, s_\infty\}, \cL\otimes \cO(d-i_{j+1}), e')$ represents a point in $M_{(j,j+1)}(X)$, then  by equation (\ref{weights on sections}),
\bea \mu\circ \varphi (s_\infty)= d- \dim \Coker e_{s_{\infty}}\leq d-(d-i_{j+1})=i_{j+1}. \eea On the other hand, $[i_j, i_{j+1}] \subseteq \mbox{ Im } \mu\circ\varphi$ by Theorem \ref{consecutive}, hence $ \varphi(s_\infty)\in X_{{j+1}}$ and $\mbox{ Im } \varphi \subseteq X_{{j+1}}^-$ (as $C \cong \PP^1$).
 Furthermore, combining this with the proof of Theorem \ref{consecutive}, we obtain
\bea  Y_{(j,j+1)}^- \cong [(X_{{j+1}}^- \setminus X_{j+1}) /\CC^*].\eea

Similarly, we obtain that the image of $E_{(j,j+2)}$ in $M_{({j+1}, {j+2})}(X)$ is \bea Y_{(j+1,j+2)}^+ = M_{(j+1,j+2)}(X) \bigcap  \ol{M}_{0, (1, a_\infty)}(\PP^n, d-i_{j+1}, a) \cong [X_{{j+1}}^+ /\CC^*] \eea and $E_{(j,j+2)}=Y_{(j,j+1)}^-\times_{X_{{j+1}}}Y_{(j+1,j+2)}^+$.
Hence
\bea  E_{(j,j+2)} &\cong&  [(X_{{j+1}}^- \setminus X_{j+1})/\CC^*]\times_{X_{{j+1}}} [(X_{{j+1}}^+ \setminus X_{j+1})/\CC^*] \\
&\cong&  [(X_{{j+1}}^-\setminus X_{j+1}) \times_{X_{{j+1}}}[ (X_{{j+1}}^+\setminus X_{j+1}) /\CC^*] /\CC^*]. \eea
 $X_{{j+1}}^-\times_{X_{{j+1}}} [( X_{{j+1}}^+ \setminus X_{j+1})/\CC^*] \to [ ( X_{{j+1}}^+ \setminus X_{j+1})/\CC^*]$ is an affine bundle obtained as pull-back of  $X_{{j+1}}^-\to {X_{{j+1}}}$ thus the normal bundle of its zero section is $\pi^{+ *}_{{j+1}}(\cN_{X_{{j+1}}|X_{{j+1}}^-})$, and the weights of the $\CC^*$ action on $X_{{j+1}}^-\times_{X_{{j+1}}} [ X_{{j+1}}^+ /\CC^*]$ are those induced on $\cN_{X_{{j+1}}|X_{{j+1}}^-}$ by the action on $X$.
\end{proof}


\begin{theorem} \label{four_consecutive}
Assume that $i_j< i_{j+1}  < i_{j+2} <i_{j+3}$ are four consecutive elements of $I$. Then the following is a Cartesian diagram:
\bea   \diagram  & M_{(j,{j+3})}(X) \dlto \drto & \\      M_{(j,j+2)}(X)  \drto    & & M_{({j+1},{j+3})}(X)  \dlto \\ & M_{(j+1,j+2)}(X). &          \enddiagram  \eea
\end{theorem}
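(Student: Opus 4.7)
The plan is to produce mutually inverse morphisms $\Phi: M_{(j,j+3)}(X) \to M_{(j,j+2)}(X) \times_{M_{(j+1,j+2)}(X)} M_{(j+1,j+3)}(X)$ and $\Psi$ in the opposite direction, using the weight-reduction morphisms on the ambient moduli of weighted stable maps together with the chain-structure description of fixed points supplied by Lemma \ref{bottom_line} and Theorem \ref{consecutive}. First I would construct $\Phi$: the natural weight-reduction morphisms $\overline{M}_{0,\cA(i_j,i_{j+3})}(X,\beta,a) \to \overline{M}_{0,\cA(i_j,i_{j+2})}(X,\beta,a)$ and $\overline{M}_{0,\cA(i_j,i_{j+3})}(X,\beta,a) \to \overline{M}_{0,\cA(i_{j+1},i_{j+3})}(X,\beta,a)$ (lowering the weight at $s_\infty$, respectively $s_0$) preserve the flow-preserving and action-class-preserving conditions (1)--(2) of Definition \ref{M}, because any contracted tail maps equivariantly to $X$ and its degree is absorbed into $\dim\Coker e$ at the corresponding marked section, exactly as in equation (\ref{weights on sections}). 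They therefore restrict to morphisms $M_{(j,j+3)}(X) \to M_{(j,j+2)}(X)$ and $M_{(j,j+3)}(X) \to M_{(j+1,j+3)}(X)$, and by associativity of weight reduction their further compositions to $M_{(j+1,j+2)}(X)$ agree. This produces $\Phi$.

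Second, to construct $\Psi$, I would use that the remark following Theorem \ref{consecutive} generalises verbatim to $M_{(j,j+3)}(X)$: closed points are chains whose interior nodes map to $\{i_{j+1},i_{j+2}\}$. Such a chain splits uniquely at the node over $i_{j+1}$ (if present) into a left part with $\mu\circ\varphi(s_\infty)\leq i_{j+1}$ that gets contracted under the reduction to $M_{(j+1,j+3)}(X)$ and a right part that sits in $M_{(j+1,j+3)}(X)$; dually, it splits at the node over $i_{j+2}$ into a left part in $M_{(j,j+2)}(X)$ and a contracted right part. The common middle component -- the single orbit with $\mu$-range crossing $(i_{j+1},i_{j+2})$ -- is what is recorded in $M_{(j+1,j+2)}(X)$. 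Given an $S$-point $(\cF_1,\cF_2,\iota)$ of the fiber product, I would therefore assemble a family over $S$ by pasting $C_1$ and $C_2$ along the matched middle orbit furnished by $\iota$: the tail of $C_1$ near $s_\infty^1$ that gets contracted under the reduction to $M_{(j+1,j+2)}(X)$ is glued to the tail of $C_2$ near $s_0^2$ that gets contracted under that same reduction, with the universal orbit identified via $\iota$. The line bundle $\cL$ and morphism $e$ on the glued curve are assembled from $\cF_1$ and $\cF_2$ on each piece, with cokernel multiplicities at the new $s_0,s_\infty$ determined by equation (\ref{weights on sections}).

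A useful conceptual complement comes from the blow-up description in the theorem immediately preceding Theorem \ref{four_consecutive}. Applied to the consecutive triples $(i_j,i_{j+1},i_{j+2})$ and $(i_{j+1},i_{j+2},i_{j+3})$, it identifies $M_{(j,j+2)}(X) \to M_{(j+1,j+2)}(X)$ and $M_{(j+1,j+3)}(X) \to M_{(j+1,j+2)}(X)$ as the weighted blow-ups of $M_{(j+1,j+2)}(X)$ along $Y_{(j+1,j+2)}^+ = [(X_{j+1}^+\setminus X_{j+1})/\CC^*]$ and $Y_{(j+1,j+2)}^- = [(X_{j+2}^-\setminus X_{j+2})/\CC^*]$ respectively. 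These two centers correspond to independent chain-degenerations (a break at $\mu=i_{j+1}$ versus a break at $\mu=i_{j+2}$); on $M_{(j+1,j+2)}(X)$ their normal bundles are transverse, being obtained by descent from normal bundles attached to disjoint strata of the Bialynicki-Birula decomposition of $X$. Hence the fiber product realises the simultaneous blow-up, which by direct inspection parametrizes precisely chains with independent nodes at both levels $i_{j+1}$ and $i_{j+2}$, matching the description of $M_{(j,j+3)}(X)$ given by Definition \ref{M}.

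The main obstacle I expect is the rigorous verification of $\Psi$ in families, especially over points of $S$ where the length of either chain jumps. Concretely, one must check that after pulling back the universal families over $M_{(j,j+2)}(X)$ and $M_{(j+1,j+3)}(X)$ to $S$ and gluing them along the overlap matched by $\iota$, the result is a flat family of weighted stable maps satisfying conditions (1)--(2) of Definition \ref{M} uniformly over $S$. The required flatness and compatibility are controlled by the upper-semicontinuity analysis of $\Coker e$ at the marked sections carried out in the proof of Theorem \ref{consecutive} together with the transversality of the two blow-up centers in $M_{(j+1,j+2)}(X)$ observed above. Once $\Psi$ is constructed, the identities $\Phi\circ\Psi=\mathrm{id}$ and $\Psi\circ\Phi=\mathrm{id}$ reduce to the uniqueness of the chain decomposition at closed points and to the fact that the two weight reductions making up $\Phi$ exactly reverse the gluing performed by $\Psi$.
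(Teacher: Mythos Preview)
Your approach is essentially the paper's own: build the canonical map $\Phi$ from weight reduction, and construct its inverse $\Psi$ by gluing the two families along their common image in $M_{(j+1,j+2)}(X)$. Two remarks, however.

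First, your description of the gluing has the contracted tails on the wrong ends. Under the reduction $M_{(j,j+2)}(X)\to M_{(j+1,j+2)}(X)$ it is the weight $a_0$ that drops, so it is the tail of $C_1$ containing $s_0$ that may be contracted, not the tail near $s_\infty$; symmetrically, reducing $M_{(j+1,j+3)}(X)\to M_{(j+1,j+2)}(X)$ contracts the tail near $s_\infty$ of $C_2$. The paper sidesteps this confusion with a cleaner recipe that you should adopt: remove only the sections, and glue $C_{(j,j+2)}\setminus \mbox{Im }s_\infty$ to $C_{(j+1,j+3)}\setminus \mbox{Im }s_0$ along the common open $C_{(j+1,j+2)}\setminus(\mbox{Im }s_0\cup\mbox{Im }s_\infty)$ via the isomorphism $\sigma$ supplied by the fiber product datum. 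This is manifestly flat in families because it is an open gluing, and it directly handles the locus where chain length jumps, resolving exactly the obstacle you flagged. The paper then checks the stability condition for the glued $(C,\{s_0,s_\infty\},e)$ by computing $\deg\cL$ on the possibly new end components via the descent relations $\cL\otimes\cO(i_{j+1}D_0)\to\cL_{(j+1,j+3)}$ and its analogue at $s_\infty$.

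Second, your blow-up ``conceptual complement'' is correct in spirit but is not needed and does not by itself give $\Psi$ in families; the paper makes no use of transversality of the two centers here and relies solely on the direct gluing above.
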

\begin{proof}
 We noted earlier the existence of morphisms $M_{j,k}(X) \to M_{j',k'}(X)$ whenever $j\leq j'$ and $k\geq k'$.
They are obtained by restriction from the birational morphisms between the moduli spaces of weighted stable maps.
Thus there exists a canonical map $a: M_{(j,{j+3})}(X)  \to M_{(j,j+2)}(X)  \times_{M_{(j+1,j+2)}(X)} M_{({j+1},{j+3})}(X)$.
To construct its inverse, consider an object in $M_{(j,j+2)}(X)  \times_{M_{(j+1,j+2)}(X)} M_{({j+1},{j+3})}(X)(B)$ over a scheme $B$.
To it there correspond tuples $(C_{(j,j+2)}, \{s_0, s_\infty\}, e_{(j,j+2)}:\cO_{C_{(j,j+2)}}^{n+1}\to \cL_{(j,j+2)})$ and $(C_{(j+1,j+3)}, \{s_0, s_\infty\}, e_{(j+1,j+3)}:\cO_{C_{(j+1,j+3)}}^{n+1}\to \cL_{(j+1,j+3)})$
together with a pair of isomorphism $(\sigma, \ol{\sigma})$:
\bea  \diagram C_{(j,j+2)}  \dto^{q^-} & C_{(j+1,j+3)} \dto^{q^+}   &   \mbox{ and }  & \cO_{C^-_{(j+1,j+2)}}^{n+1} \dto \rto^{e^-} & \cL^-_{(j+1,j+2)} \dto^{\cong \ol{\sigma}} \\ C^-_{(j+1,j+2)} \rto^{\sigma} & C^+_{(j+1,j+2)} & & \sigma^*\cO_{C^+_{(j+1,j+2)}}^{n+1} \rto^{\sigma^*e^+} & \sigma^*\cL^+_{(j+1,j+2)}.
\enddiagram  \eea such that \\
\noindent $\bullet$  The rational map $q^-$ contracts the locus $D_{0}$ where the line bundle \bea \omega_{C_{(j,j+2)}|B}(\frac{d-i_{j+1}}{d}s_0+ \frac{i_{j+2}}{d}s_\infty)\otimes \cL_{(j,j+2)}^a\eea fails to be ample, and similarly, $q^+$ the locus  $D_{\infty}$ where \bea \omega_{C_{(j+1,j+3)}|B}(\frac{d-i_{j+1}}{d}s_0+ \frac{i_{j+2}}{d}s_\infty)\otimes \cL_{(j+1,j+3)}^a\eea fails to be ample,
\\ \noindent $\bullet$ $\cL^-_{(j+1,j+2)}$ is obtained by descent from $\cL_{(j,j+2)}\otimes \cO_{C_{(j,j+2)}}(i_{j+1}D_0)$, and $\cL^+_{(j+1,j+2)}$ from $\cL_{(j+1,j+3)}\otimes \cO_{C_{(j+1,j+3)}}((d-i_{j+2})D_\infty)$,
\\ \noindent $\bullet$ The second diagram is commutative.

  We construct a flat family $C$ over $B$ by gluing $C_{(j,j+2)}\setminus\mbox{ Im } s_\infty$ and $C_{(j+1,j+3)}\setminus\mbox{ Im } s_0$ along the open subschemes  \bea \diagram C^-_{(j+1,j+2)}\setminus (\mbox{ Im } s_\infty \bigcup \mbox{ Im } s_0) \rto^{\cong } & C^+_{(j+1,j+2)}\setminus (\mbox{ Im } s_\infty \bigcup \mbox{ Im } s_0), \enddiagram \eea via the restriction of $\sigma$.
   By construction $C$ inherits the subschemes $D_0$ and $D_\infty$ from $ C_{(j,j+2)}$ and $C_{(j+1,j+3)}$ respectively,
  so their contractions give maps $p^+$ and $p^-$
  fitting into a commutative diagram
  \bea   \diagram  & C \dlto_{p^+} \drto^{p^-} & \\      C_{(j,j+2)} \drto_{q^-}    & & C_{(j+1,j+3)}  \dlto^{q^+} \\ & C_{(j+1,j+2)}. &          \enddiagram  \eea
  (as can be checked on the two patches).

  As well, the restrictions of $e_{(j,j+2)}$ and $e_{(j+1,j+3)}$ on $C_{(j,j+2)}\setminus\mbox{ Im } s_\infty$ and $C_{(j+1,j+3)}\setminus\mbox{ Im } s_0$ can be glued via the pullbacks of $\ol{\sigma}$ on ${C^-_{(j+1,j+2)}\setminus (\mbox{ Im } s_\infty \bigcup \mbox{ Im } s_0)}$ to form a morphism $e:\cO_{C}^{n+1}\to \cL$ of bundles over $C$.
  We have thus obtained a tuple $(C, \{s_0, s_\infty\}, e:\cO_{C}^{n+1}\to \cL)$ in $M_{(j,{j+3})}(X)(B)$. Indeed, the conditions in Definition \ref{M} are all inherited from the patches. To check that the stability properties in Definition \ref{weighted map spaces} are also inherited from those on $C_{(j,j+2)}$ and $C_{(j+1,j+3)}$, it is enough to note that:
   \\ \noindent $\bullet$ The sections $s_0$ and $s_\infty$ in $C$ are inherited from $ C_{(j,j+2)}$  and $C_{(j+1,j+3)}$, respectively.
  \\ \noindent $\bullet$ On each fibre $C_b$ over $b\in B$, the component $C_{0,b}$ containing $s_0(b)$ is also inherited from $ C_{(j,j+2)}$ so $\cL=\cL_{(j,j+2)}$ over $C_{0,b}$. Similarly $\cL=\cL_{(j+1,j+3)}$ over the component $C_{\infty, b}$ containing $s_\infty (b)$.
  \\ \noindent $\bullet$ By construction $\cL\otimes \cO_{C}(i_{j+1}D_0)$ descends to $\cL_{(j+1,j+3)}$ via $p^-$ and the composition
   \bea  \diagram  \cO_{C}^{n+1} \rto^{e} & \cL \rto &  \cL\otimes \cO_{C}(i_{j+1}D_0)  \enddiagram \eea
  descends to  $e_{(j+1,j+3)}$.  Thus for the points $b$ such that $C_{0,b}\subset D_0$ we have
   \bea \deg \cL_{C_{0,b}}= i_{j+1} = \dim \Coker e_{({j+1},{j+3}), s_0(b))} \eea
   and similarly,  $\deg \cL_{C_{\infty,b}}= d-i_{j+2}=\dim \Coker e_{(j,j+2), s_\infty(b))}$ whenever $C_{\infty,b} \subset D_\infty$.

    The canonical contractions $p^+$ and $p^-$ insure that the map  $M_{(j,j+2)}(X)  \times_{M_{(j+1,j+2)}(X)} M_{({j+1},{j+3})}(X) \to M_{(j,{j+3})}(X) $ thus constructed is the inverse of $a$.

\end{proof}

\begin{corollary} \label{1stpyramid}
For all the elements $0=i_0< i_1< i_2  < i_3 < ...< i_k=d$ of $I$, we obtain a net of Cartesian diagrams:

\begin{tikzpicture}[line cap=round,line join=round,>=triangle 45,x=.5cm,y=.5cm]
\clip(-3.12,-7.98) rectangle (25.77,5.75);
\draw (5.52,1.88) node[anchor=north west] {$M_{(0, k-1)}(X)$};
\draw (13.67,1.83) node[anchor=north west] {$M_{(1,k)}(X)$};
\draw (9.66,5.93) node[anchor=north west] {$M_{(0,k)}(X)$};
\draw (1.7,-2.26) node[anchor=north west] {$M_{(0,k-2)}(X)$};
\draw (9.48,-2.17) node[anchor=north west] {$M_{(1,k-1)}(X)$};
\draw (17.76,-2.17) node[anchor=north west] {$M_{(2,k)}(X)$};
\draw (-2.08,-6.27) node[anchor=north west] {$M_{(0,k-3)}(X)$};
\draw (5.25,-6.27) node[anchor=north west] {$M_{(1,k-2)}(X)$};
\draw (13.62,-6.27) node[anchor=north west] {$M_{(2,k-1)}(X)$};
\draw (22.08,-6.27) node[anchor=north west] {$M_{(3,k)}(X)$};
\draw [->] (12,4) -- (14,2);
\draw [->] (16,0) -- (18,-2);
\draw [->] (20,-4) -- (22,-6);
\draw [->] (10,4) -- (8,2);
\draw [->] (14,0) -- (12,-2);
\draw [->] (18,-4) -- (16,-6);
\draw [->] (6,0) -- (4,-2);
\draw [->] (8,0) -- (10,-2);
\draw [->] (4,-4) -- (6,-6);
\draw [->] (2,-4) -- (0,-6);
\draw [->] (12,-4) -- (14,-6);
\draw [->] (10,-4) -- (8,-6);
\draw (-3.08,-7.35) node[anchor=north west] {$..............................................................................................................................$};
\end{tikzpicture}

At the top, $M_{(0,k)}(X)=\underleftarrow{\lim}{M_{(j,j')}(X)}$ is a closed and open substack of $\ol{M}_{0,2}(X, \beta)^{\CC^*}$.
\end{corollary}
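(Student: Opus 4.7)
The plan is to (i) promote Theorem \ref{four_consecutive} from width-three diamonds to arbitrary diamonds in the pyramid by induction on $j'-j$, (ii) deduce the inverse-limit identification from these iterated Cartesian squares, and (iii) establish the closed-and-open property by extending the argument of the corollary following Theorem \ref{consecutive}.

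For step (i), for every $0\leq j<j'\leq k$ with $j'-j\geq 3$ I would show by induction on $j'-j$ that
\[
M_{(j,j')}(X)\;\cong\;M_{(j,j'-1)}(X)\times_{M_{(j+1,j'-1)}(X)}M_{(j+1,j')}(X).
\]
The base case $j'-j=3$ is exactly Theorem \ref{four_consecutive}. For the inductive step I would reuse the gluing procedure from the proof of Theorem \ref{four_consecutive} verbatim: given a compatible pair of families
\[
(C_{(j,j'-1)},\{s_0,s_\infty\},e_{(j,j'-1)})\quad\text{and}\quad (C_{(j+1,j')},\{s_0,s_\infty\},e_{(j+1,j')})
\]
agreeing on $M_{(j+1,j'-1)}(X)$, one contracts the loci $D_0,D_\infty$ where the respective log-relative $\omega$-bundles fail to be ample, glues the resulting families along the open subscheme over $M_{(j+1,j'-1)}(X)$, and promotes the morphisms of bundles via the isomorphism $\overline{\sigma}$. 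The verifications that the flow-preserving and action-class-preserving conditions (Definition \ref{M}) persist, and that the degree bookkeeping (\ref{weights on sections}) is correct at the glued node, are identical to the width-three case because they are local at the junction; the higher width only affects the components further away from the gluing node, which are untouched.

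For step (ii), once every small diamond in the pyramid is Cartesian, the top object $M_{(0,k)}(X)$ is universal among compatible systems of objects in all the $M_{(j,j')}(X)$, which is precisely the universal property defining $\underleftarrow{\lim}M_{(j,j')}(X)$. Since the indexing poset has $(0,k)$ as its unique minimal element, this limit is computed formally as the top of the diagram, and the iterated Cartesian property identifies $M_{(0,k)}(X)$ with it.

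For step (iii), I would repeat the argument from the corollary following Theorem \ref{consecutive}. The ambient space $\overline{M}_{0,\cA(i_0,i_k)}(\PP^N,d,a)=\ol{M}_{0,2}(\PP^N,d)$ is smooth, so its $\CC^*$-fixed locus is smooth and its connected components are open-and-closed irreducible substacks. The defining conditions (1)-(2) of Definition \ref{M} are given by discrete data (the combinatorics of the chain of components, the fixed components of $X$ hosting the images of the marked points and nodes, and the degrees of the restrictions of $\cL$ to components) which are locally constant on a smooth stack. Hence $M_{(0,k)}(\PP^N)$ is a union of connected components of $\ol{M}_{0,2}(\PP^N,d)^{\CC^*}$, and pulling back via the closed embedding $X\hookrightarrow\PP^N$ gives the same statement for $M_{(0,k)}(X)\hookrightarrow\ol{M}_{0,2}(X,\beta)^{\CC^*}$.

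The main obstacle is the verification in the inductive step of (i): one must check that the gluing of two weighted stable maps produces an object satisfying \emph{all} the conditions of Definition \ref{M} for the full range $\cA(i_j,i_{j'})$, not just at the junction. The degree compatibility on the two ``end'' components (which control the support of $\Coker e$ at $s_0$ and $s_\infty$) is inherited from the factors via (\ref{weights on sections}), and the stability inequalities in Definition \ref{weighted map spaces} are preserved because neither end component is altered by the contraction-gluing procedure, so the induction reduces cleanly to the base case at each step.
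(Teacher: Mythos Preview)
Your proposal is essentially correct, and the overall architecture matches the paper's intent, but there are two genuine differences worth noting.

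\textbf{On the Cartesian squares and the inverse limit.} The paper's proof of the corollary does not address these at all: it treats the net of Cartesian diagrams and the inverse-limit identification as immediate consequences of Theorem \ref{four_consecutive}, and devotes the entire written proof to the closed-and-open claim. Your explicit induction on $j'-j$ is a sound way to fill this in, and the gluing argument from Theorem \ref{four_consecutive} does generalise as you say; the only care needed is that the contraction loci $D_0$, $D_\infty$ and the degree bookkeeping now involve $i_{j+1}$ and $i_{j'-1}$ rather than two consecutive walls, but this is cosmetic. So here you are simply making explicit what the paper leaves tacit.

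\textbf{On the closed-and-open property.} Here the two arguments genuinely diverge. You pass to the ambient $\PP^N$, use smoothness of $\ol{M}_{0,2}(\PP^N,d)$ to conclude that its $\CC^*$-fixed locus is smooth with open-and-closed connected components, observe that conditions (1)--(2) single out some of these components, and then pull back along the closed embedding into $\ol{M}_{0,2}(X,\beta)^{\CC^*}$. The paper instead works directly over $X$ and gives a concrete topological argument: for a family $\pi:C\to B$ with a $\CC^*$-invariant stable map $\varphi$, it finds a neighbourhood $B'$ of any point $b$ on which all nodes of nearby fibres land in the same components $X_j$ of the fixed locus (using continuity of $\varphi$, properness of $\pi$, and compactness of $X_j^\pm/\CC^*$), so that flow-preservation and the degree constraints persist. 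Your route is cleaner and more conceptual; the paper's is self-contained over $X$ and avoids the (mild) verification that open-and-closed substacks of the $\PP^N$-fixed locus pull back to open-and-closed substacks of the $X$-fixed locus.
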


\begin{proof}
We note that the weights $\cA(0,d)=(1,1)$ so $M_{(0,k)}(X)$ is indeed a closed substack of $\ol{M}_{0,2}(X, \beta)^{\CC^*}$.
Conditions (1)-(2) in Definition \ref{M} are open in $\ol{M}_{0,2}(X, \beta)^{\CC^*}$. Indeed, consider any family $\pi: C\to B$ with sections $s_0$ and $s_\infty$ and stable map $\varphi: C \to X$, invariant under the $\CC^*$-action, and consider a fibre $C_b$ satisfying properties (1)-(2) in Definition \ref{M}. The nodes $s_0, s_1, ..., s_r=s_\infty$ of $C$ are mapped to points $x_j$ in the fixed point loci $X_{j}$ for some $j\in \{0,1,..., k\}$. Let $Z$ denote the set of indices $j$ with this property.

Since the components of the fixed locus of $X$ are closed and finitely many, there exists a neighborhood $B'$ of $b$, such that all the nodes in the fibres of $C_{|B'} \to B'$ are mapped into the same components $X_{j}$-s with $j\in Z$. (We used the continuity of $\varphi$  and properness of $\pi$.) Since $X_{j}^+/\CC^*$ and $X_{j}^-/\CC^*$ are compact and $\mbox{ Im } \varphi_b \bigcap X_{j}^\pm\not=\emptyset$, then $B'$ can be chosen such that whenever $C_{b'} \bigcap X_{j} \not=\emptyset$ for some $b'\in B'$, then  $C_{b'} \bigcap X^+_{j} \not=\emptyset$ and $C_{b'} \bigcap X^-_{j} \not=\emptyset$. Thus the preservation of flow is an open condition. As well, since there are finitely many ways to split a fibre according to the components of the fixed locus of $X$ where the nodes are mapped and according to the degrees of the components, it follows that fixing the degree of $\varphi$ on components of the fibres intersecting given components of the fixed point locus of $X$ gives an open condition as well.


\end{proof}

For each $j,j'\in \{0, 1, ..., k\}$, the universal family $\cU_{(j,j')}(X)$ over $M_{(j,j')}(X)$ admits a rational  evaluation map $\mbox{ev}_{(j,j')}$ into $X$ defined everywhere except on the images of the sections $s_0$ and $s_\infty$. The next step will be to replace $X$ by a more suitable target $X_{(j,j')}$.

\begin{notation} \label{notation_weights_for_targets}
We will work inside the weighted stable map spaces with three marked points $\ol{M}_{0,\cA(i, i', \Delta i)}(\PP^n, d, a)$,
where $0<a< \frac{1}{2d+1}$ and $\cA(i,i',\Delta i)$ represents the triple of weights
\bea  \begin{array}{lll}  a_\infty (i):= 1-(d-i-1)a, & a_0(i'):= 1-(i'-1)a,  & \mbox{ and } a_1(\Delta i):=(\Delta i-1)a,\end{array}\eea
for $-1\leq i < i' \leq d+1$ and $\Delta i \geq i'-i$.
\end{notation}

 In particular, $\ol{M}_{0,\cA(-1,d+1, d+2)}(\PP^n, d, a)\cong \PP^n_d:=\PP^{(n+1)(d+1)-1}$, is the "linear sigma model" representing trivial families of $\PP^1$-s with degree $d$ rational maps into $\PP^n$. We note that $\PP^n_d$ admits a natural $\CC^*$--action induced by the action with weights $(0,1)$ of $\CC^*$ on the source $\PP^1$.



Of special interest to us will be the cases when $\Delta i = i'-i$ and $0\leq i < i' \leq d$.

\begin{lemma} \label{2nd_action}
Each of the moduli spaces $\ol{M}_{0,\cA(i,i', i'-i)}(\PP^n, d, a)$  with  $0\leq i < i' \leq d$ parametrizes families of $\PP^1$-s with at least two disjoint sections and as such admits a natural $\CC^*$--action induced from the source.
\end{lemma}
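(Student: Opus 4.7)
The plan has two parts, mirroring the structure of the lemma: first, show that $s_0$ and $s_\infty$ are disjoint on every fiber; second, use this disjointness to construct the $\CC^*$-action. Only the first part involves any real computation.

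For disjointness, I will argue by contradiction. Suppose that $s_0(b) = s_\infty(b) = s$ in some fiber $C_b$. Condition (3) of Definition~\ref{weighted map spaces}, applied with $J = \{0,\infty\}$, then demands
\[
a_0(i') + a_\infty(i) + a \dim \cG_s \leq 1.
\]
Substituting $a_0(i') = 1 - (i'-1)a$ and $a_\infty(i) = 1 - (d-i-1)a$, this is equivalent to
\[
(i' + d - i - 2)\, a \;\geq\; 1 + a \dim \cG_s.
\]
Since $0 \leq i < i' \leq d$ gives $i' + d - i - 2 \leq 2d-2$, the left-hand side is at most $(2d-2)a$. The standing assumption $a < 1/(2d+1)$ from Notation~\ref{notation_weights_for_targets} then forces the left-hand side to be strictly less than $1$, while the right-hand side is at least $1$, a contradiction. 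Hence $s_0$ and $s_\infty$ are disjoint on every fiber.

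With two disjoint sections available, the universal family $C \to B$ admits a fiberwise $\CC^*$-action $\phi_t$ fixing $s_0$ and $s_\infty$: on a smooth fiber $C_b \cong \PP^1$ with $s_0(b)=0$ and $s_\infty(b)=\infty$ this is the standard scaling action, and on a chain fiber we extend it by letting $\CC^*$ scale the component containing $s_0$ and act trivially on all other components. The induced action on the moduli space sends the isomorphism class $[C, s_0, s_1, s_\infty, \cL, e]$ to $[C, s_0, \phi_t \circ s_1, s_\infty, \cL, (\phi_{t^{-1}})^* e]$. Because $\phi_t$ is a biholomorphism fixing $s_0$ and $s_\infty$ and preserving the degree of $\cL$ on each component, every condition of Definition~\ref{weighted map spaces} is preserved, so the action is well-defined. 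The only computational step of substance is the disjointness inequality above, and I do not anticipate further obstacles.
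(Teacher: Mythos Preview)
There is a genuine gap: the lemma asserts that every fiber $C_b$ is an \emph{irreducible} $\PP^1$, not merely a nodal rational curve with two disjoint marked points. You skip this entirely. Your disjointness computation for $s_0$ and $s_\infty$ is correct (and the paper also invokes Condition~(3) for the same purpose), but that is the easy part. The substance of the paper's proof is the irreducibility argument: if $C_b$ were reducible, any tail must contain $s_0(b)$ or $s_\infty(b)$ (because $a_1(i'-i)+da\leq 1$), and then a case analysis on the location of $s_1(b)$ together with the ampleness condition~(1) in Definition~\ref{weighted map spaces} forces $\deg\cL_{|C_0}+\deg\cL_{|C_\infty}>d$, a contradiction. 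This irreducibility is not cosmetic---it is reused explicitly in the proof of Lemma~\ref{also2nd_action} and is what makes Corollary~\ref{basisofXpyramid} work.

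Your proposed $\CC^*$-action on a chain fiber---scaling only the component through $s_0$ and fixing the rest---is an ad hoc fix for a situation that does not arise, and it is not the ``natural $\CC^*$-action induced from the source'' the lemma refers to. Once irreducibility is established, the universal curve is the $\PP^1$-bundle $C=\PP(\pi_*\cO_C(\mbox{Im }s_0))$, and the intended action is simply the weight-$(0,1)$ scaling on this bundle. Your construction on chains would not globalize to such a bundle action and would not be compatible with the later uses of this lemma.
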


\begin{proof}

Let $(\pi: C\to B, \{s_0, s_1, s_\infty\}, e: \cO^{n+1}_C \to \cL)$ be a family of $\cA(i,i', i'-i)$-weighted stable maps. We first note that all the fibres $C_b$ of the family $\pi: C \to B$ are $C_b \cong \PP^1$. Indeed, suppose $C_b$ is reducible and let $C'$ be a "tail" of $C_b$, namely a component such that $C_b\setminus C'$ is an open subset of $C_b$ with exactly one point on the boundary. As $a_1(i'-i)+da\leq 1$, condition (1) in Definition \ref{weighted map spaces} implies that $C'$ must contain $s_0(b)$ or $s_\infty(b)$.
If $C_0$ and $C_\infty$ are the components of $C_b$ containing $s_0(b)$ and $s_\infty(b)$ respectively, then we are in one of these three cases:
\begin{itemize}
\item[(i)]  $s_1(b)$ is in neither $C_0$ nor $C_\infty$. Then the choice of weights and condition (1) in Definition \ref{weighted map spaces} imply $ \deg \cL_{|C_0}\geq i'$ and $ \deg \cL_{|C_\infty}\geq d-i$, but $d-i+i'>d=\deg \cL$, which is impossible.
\item[(ii)] $s_1(b) \in C_0$. Then $\deg \cL_{|C_0}+\deg \cL_{|C_\infty}>i+(d-i)=d=\deg \cL$, which is impossible.
\item[(iii)] $s_1(b) \in C_\infty$. Then $\deg \cL_{|C_0}+\deg \cL_{|C_\infty}>i'+(d-i')=d=\deg \cL$, which is impossible.
\end{itemize}
Hence $C_0=C_\infty$.
Condition (3) in Definition \ref{weighted map spaces} insures that $s_0(b)$  and $s_\infty(b)$ are always distinct. Hence $C=\PP(\pi_*\cO_C(\mbox{ Im } s_0))$
which admits a natural  $\CC^*$--action with weights $(0, 1)$ on the fibres of $\pi$.
\end{proof}

\begin{remark} \label{chambers_for_weights}
We note that for $a_1:=(i'-i-1)a$, all values $a_0$ and $a_\infty$ with
\bea  \begin{array}{lll}  1-(d-i)a < a_\infty \leq  1-(d-i-1)a \mbox{ and } & 1-i'a < a_0\leq  1-(i'-1)a,\end{array}\eea
  define the same moduli space $\ol{M}_{0,\cA'(i,i', i'-i)}(\PP^n, d, a)$.
\end{remark}

\begin{lemma} \label{also2nd_action}
  For positive integers $0\leq i < i' < i''< i''' \leq d$, there is a commutative diagram of rational maps and morphisms as illustrated below, such that the lowest square is Cartesian. All the spaces in the diagram below  admit two $\CC^*$--actions induced from the target $X$ and source respectively, and the natural maps between them are  $\CC^*$--equivariant:

\end{lemma}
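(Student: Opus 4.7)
The plan is to mirror the strategy of Theorem \ref{four_consecutive}, but now working with the three-pointed weighted spaces $\ol{M}_{0,\cA(i,i',i'-i)}(\PP^n, d, a)$ and tracking the two independent $\CC^*$-actions throughout. The extra marked section $s_1$ is the only genuinely new data compared to the proof of Theorem \ref{four_consecutive}, and once it is handled, the rest is formal.

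First I would construct each arrow in the diagram as a weight-reduction/contraction morphism between weighted stable map spaces, following \cite{noi3}. Given any two pairs $(p,q)$ and $(p',q')$ with $p\leq p'< q'\leq q$ appearing as indices in the diagram, raising the weight at $s_0$ and $s_\infty$ while keeping $a_1$ fixed destabilizes precisely those boundary components where the relative dualizing sheaf twisted by the new weights fails to be ample; contracting them yields the desired canonical morphism. Commutativity of every square in the diagram is then immediate, since any two compositions realize the same functorial operation on the triple $(C\to B, \{s_0,s_1,s_\infty\}, \cL, e)$, namely tensoring $\cL$ with $\cO_C$ of the new base-locus divisors and contracting the newly unstable tails. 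Remark \ref{chambers_for_weights} guarantees that the small perturbations of weights needed to align all four corners of the lowest square stay within a common chamber, so no moduli space actually changes under those perturbations.

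Next I would record both $\CC^*$-actions on each space and check equivariance. The source action is supplied by Lemma \ref{2nd_action}: every fibre of the universal curve is $\PP^1$ with disjoint sections $s_0,s_\infty$, so $(t,[u_0:u_1])\mapsto [u_0:tu_1]$ lifts canonically to the whole moduli space. The target action is induced, exactly as in the paragraph before Definition \ref{M}, by the equivariant linearization on $X$ acting on $e:\cO_C^{n+1}\to\cL$. The two actions commute because one acts on the domain curve and the other on the bundle morphism. Equivariance of the diagram arrows is then a formal consequence of functoriality: the weight-reduction and contraction operations depend only on the intrinsic data $(C\to B,\{s_j\},\cL,e)$ and commute with any automorphism thereof, in particular with both $\CC^*$-actions.

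The substantive step is the Cartesian property of the lowest square, and I would prove it by adapting the gluing argument of Theorem \ref{four_consecutive}. Given compatible families on the two sides of the putative fiber product, one obtains curves $C_{(p,q)}$ and $C_{(p',q')}$ together with an isomorphism $(\sigma,\bar\sigma)$ over the intermediate quotient $C_{(p',q)}$; gluing the two families along this common open subset produces a curve $C$ equipped with three sections inherited from the patches and a morphism $e:\cO_C^{n+1}\to\cL$ obtained by descent of $\bar\sigma$. The canonical contractions of the two new base-locus divisors then provide the inverse of the natural map from $\ol{M}_{0,\cA(p,q,q-p)}$ into the fiber product. The hard point to check is that the glued triple really satisfies the stability conditions of Definition \ref{weighted map spaces}, and in particular condition (3) at the new section $s_1$, since $s_1$ may collide with a node or with one of $s_0,s_\infty$ in boundary strata. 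This reduces to a numerical check that the four weight vectors $\cA(i,i''), \cA(i,i'''), \cA(i',i''), \cA(i',i''')$ satisfy the required inequalities uniformly; with Remark \ref{chambers_for_weights} and the freedom $0<a<1/(2d+1)$, the check is straightforward. The remaining verifications that the conditions (1) and (2) of Definition \ref{M} are inherited from the patches are identical to the corresponding step in Theorem \ref{four_consecutive}.
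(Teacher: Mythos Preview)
Your overall strategy of tracking the two $\CC^*$-actions and treating the arrows as weight-change morphisms is reasonable, but several details are off, and your route to the Cartesian property differs substantially from the paper's.

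First, two concrete errors. You write that the arrows are obtained by ``raising the weight at $s_0$ and $s_\infty$ while keeping $a_1$ fixed''. This is false: the third weight $a_1$ varies across the diagram (e.g.\ $a_1=(i''-i-1)a$ for $\cA(i,i'',i''-i)$ versus $a_1=(i''-i'-1)a$ for $\cA(i',i'',i''-i')$), and this is essential --- the position of $s_1$ relative to $s_0,s_\infty$ in the boundary strata is exactly what distinguishes the various blow-up loci. Second, you invoke ``conditions (1) and (2) of Definition \ref{M}'' at the end, but Lemma \ref{also2nd_action} is about the full moduli spaces $\ol{M}_{0,\cA}(\PP^n,d,a)$, not the flow-preserving loci; those conditions enter only later in Corollary \ref{DiamondX}.

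On the source $\CC^*$-action: functoriality alone is not enough. Lemma \ref{2nd_action} gives the action only on the integer-index spaces, where every fibre is $\PP^1$. On the half-integer spaces (which do carry reducible curves) the paper obtains the action by checking explicitly that each blow-up locus is fixed by the source action, so the action lifts to the blow-up. You also do not address the dashed arrows in the diagram, which are genuinely rational; the paper describes them as sequences of blow-ups followed by blow-downs along invariant loci.

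For the Cartesian property of the lowest square, the paper's argument is far simpler than your gluing proposal: it identifies the two maps into $\ol{M}_{0,\cA(i',i'',i''-i')}$ as blow-ups along the loci $\{s_1=s_0,\ m_0=i'\}$ and $\{s_1=s_\infty,\ m_\infty=d-i''\}$ respectively, observes that these loci are disjoint (since $s_0\neq s_\infty$ always), and concludes that the fibre product of the two blow-ups is the successive blow-up along both, which is the middle space. Your gluing approach \`a la Theorem \ref{four_consecutive} could in principle be made to work, but it is heavier machinery for what is ultimately the elementary fact that blow-ups along disjoint centres commute and their fibre product is the common blow-up.
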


\definecolor{xdxdff}{rgb}{0.49,0.49,1}
\definecolor{ffqqqq}{rgb}{1,0,0}
\definecolor{qqqqff}{rgb}{0,0,1}
\begin{tikzpicture}[line cap=round,line join=round,>=triangle 45,x=.95cm,y=.95cm]
\clip(3,-3.7) rectangle (19.39,5.82);
\draw [color=qqqqff](5.5,3.6) node[anchor=north west] {\small{$\overline{M}_{0, \mathcal{A}(i,i''+\frac{1}{2},i''-i+\frac{1}{4})}{(\mathbb P^n, d, a)}$}};
\draw [color=qqqqff](11.33,3.6) node[anchor=north west] {\small{$\overline{M}_{0, \mathcal{A}(i'-\frac{1}{2},i''',i'''-i'+\frac{1}{4})}(\mathbb P^n, d, a)$}};
\draw (3.01,1.32) node[anchor=north west] {\small{$\overline{M}_{0, \mathcal{A}(i,i'',i''-i)}(\mathbb P^n, d, a)$}};
\draw [color=ffqqqq](7.47,1.32) node[anchor=north west] {\small{$\overline{M}_{0, \mathcal{A}(i'-\frac{1}{2},i''+\frac{1}{2},i''-i'+\frac{1}{4})}(\mathbb P^n, d, a)$}};
\draw (13.43,1.32) node[anchor=north west] {\small{$\overline{M}_{0, \mathcal{A}(i',i''',i'''-i')}(\mathbb P^n, d, a)$}};
\draw [color=qqqqff](5.5,-0.7) node[anchor=north west] {\small{$\overline{M}_{0, \mathcal{A}(i'-\frac{1}{2},i'',i''-i'+\frac{1}{4})}(\mathbb P^n, d, a)$}};
\draw (9.29,5.52) node[anchor=north west] {\small{$\overline{M}_{0, \mathcal{A}(i,i''',i'''-i)}(\mathbb P^n, d, a)$}};
\draw [color=qqqqff](11.59,-0.7) node[anchor=north west] {\small{$\overline{M}_{0, \mathcal{A}(i',i''+\frac{1}{2},i''-i'+\frac{1}{4})}(\mathbb P^n, d, a)$}};
\draw (9.35,-2.68) node[anchor=north west] {\small{$\overline{M}_{0, \mathcal{A}(i',i'',i''-i')}(\mathbb P^n, d, a)$}};
\draw [->,dash pattern=on 3pt off 3pt] (8.93,3.48) -- (10.01,4.47);
\draw [->,dash pattern=on 3pt off 3pt] (12.45,3.5) -- (11.55,4.44);
\draw [->] (7.95,2.37) -- (6.83,1.29);
\draw [->,dash pattern=on 3pt off 3pt] (11.27,1.49) -- (12.39,2.43);
\draw [->] (13.58,2.37) -- (14.57,1.38);
\draw [->,dash pattern=on 3pt off 3pt] (10.2,1.47) -- (9.26,2.33);
\draw [->] (11.31,0.42) -- (12.23,-0.54);
\draw [->] (10.16,0.48) -- (9.03,-0.55);
\draw [->] (12.21,-1.65) -- (11.1,-2.71);
\draw [->,dash pattern=on 3pt off 3pt] (13.22,-0.69) -- (14.52,0.3);
\draw [->,dash pattern=on 3pt off 3pt] (7.95,-0.6) -- (7.01,0.44);
\draw [->] (8.96,-1.71) -- (9.9,-2.61);

\end{tikzpicture}

\begin{proof}
We first note the existence of a birational morphism
\bea \overline{M}_{0, \mathcal{A}(i,i''+\frac{1}{2},i''-i+\frac{1}{4})}(\mathbb P^n, d, a) \to \overline{M}_{0, \mathcal{A}(i,i'',i''-i)}(\mathbb P^n, d, a). \eea
(Indeed, even if not all the weights defining the first space are larger than the corresponding weights of $\mathcal{A}(i,i'',i''-i)$, they are all larger than other triples in the same chamber as specified in Remark \ref{chambers_for_weights}.) The restriction of the map above to the exceptional locus is
\bea    \ol{M}_{0,(a_0(i''),1)}(\PP^n, i'', a) \times_{\PP^n} \ol{M}_{0,(1, a_1(i''-i), a_\infty(i))}(\PP^n, d-i'', a) \to \ol{M}_{0,(a_0(i''),1)}(\PP^n, i'', a), \eea
representing weighted stable maps of splitting type:

\definecolor{qqqqff}{rgb}{0,0,1}
\definecolor{xdxdff}{rgb}{0.49,0.49,1}
\begin{tikzpicture}[line cap=round,line join=round,>=triangle 45,x=.9cm,y=.9cm]
\clip(3,-1) rectangle (19,1);
\draw [shift={(8.44,-2.36)}] plot[domain=0.99:2.17,variable=\t]({1*2.83*cos(\t r)+0*2.83*sin(\t r)},{0*2.83*cos(\t r)+1*2.83*sin(\t r)});
\draw [shift={(5.34,-1.76)}] plot[domain=0.86:2.29,variable=\t]({1*2.31*cos(\t r)+0*2.31*sin(\t r)},{0*2.31*cos(\t r)+1*2.31*sin(\t r)});
\draw [shift={(15,-2)}] plot[domain=0.92:2.21,variable=\t]({1*2.51*cos(\t r)+0*2.51*sin(\t r)},{0*2.51*cos(\t r)+1*2.51*sin(\t r)});
\draw (3.66,0.78) node[anchor=north west] {$s_0$};
\draw (8.28,1.14) node[anchor=north west] {$s_1$};
\draw (9.5,0.66) node[anchor=north west] {$s_\infty$};
\draw (13.46,0.8) node[anchor=north west] {$s_0$};
\draw (16.24,0.76) node[anchor=north west] {$s_1=s_\infty$};
\draw (4.84,0.3) node[anchor=north west] {$\deg i''$};
\draw (7.,0.3) node[anchor=north west] {$\deg (d-i'')$};
\draw (14,0.3) node[anchor=north west] {$\deg i''$};
\draw (16.1,0.2) node[anchor=north west] {{$m_\infty=d-i''$}};
\draw [->] (10.76,0.16) -- (12.94,0.14);
\begin{scriptsize}
\fill [color=xdxdff] (10,0) circle (1.5pt);
\fill [color=qqqqff] (3.8,0) circle (1.5pt);
\fill [color=xdxdff] (8.38,0.47) circle (1.5pt);
\fill [color=xdxdff] (16.52,0) circle (1.5pt);
\fill [color=xdxdff] (13.5,0) circle (1.5pt);
\end{scriptsize}
\end{tikzpicture}

(We note that the fractionary parts of the weights play no significant role in the moduli problem for
 the exceptional divisor and can be omitted here.)

Indeed, redoing the casework from the proof of Lemma \ref{2nd_action} in the case of the moduli space $\overline{M}_{0, \mathcal{A}(i,i''+\frac{1}{2},i''-i+\frac{1}{4})}(\mathbb P^n, d, a)$:
 \begin{itemize}
 \item If  $s_1(b) \in C_\infty$, then
\bea  \deg \cL_{|C_0} > i''-\frac{1}{2} \mbox{ and } \deg \cL_{|C_\infty}  > d-i''-\frac{1}{4},\eea
yielding $\cL_{|C_0} = i''$ and $\deg \cL_{|C_\infty}  = d-i''$ as in the exceptional divisor above.
\item Whereas if $s_1(b) \in C_0$, then
\bea  \deg \cL_{|C_0} > \frac{1}{4}+i \mbox{ and } \deg \cL_{|C_\infty} >  d-i-\frac{1}{4}\eea
which is impossible.
\end{itemize}

The blow-up locus $\ol{M}_{0,(a_0(i''),1)}(\PP^n, i'', a)$  represents objects in
 $\overline{M}_{0, \mathcal{A}(i,i'',i''-i)}(\mathbb P^n, d, a)$ for which $s_1=s_\infty$ and $\dim \Coker e_{s_1} = d-i''$.
 This locus is fixed by the $\CC^*$--action defined in Lemma \ref{2nd_action} and so there is an induced $\CC^*$--action on the weighted blow-up
 $\overline{M}_{0, \mathcal{A}(i,i''+\frac{1}{2},i''-i+\frac{1}{4})}(\mathbb P^n, d, a)$.

\bigskip

   The two other SW--oriented arrows are blow-downs whose exceptional loci represent the same splitting type as above. Similarly, the SE--oriented arrows are blow-downs which over the exceptional locus represent contractions of the following splitting type within their moduli spaces:

   \definecolor{qqqqff}{rgb}{0,0,1}
\definecolor{xdxdff}{rgb}{0.49,0.49,1}
\begin{tikzpicture}[line cap=round,line join=round,>=triangle 45,x=.9cm,y=.9cm]
\clip(3,-1) rectangle (18.5,1);
\draw [shift={(8.44,-2.36)}] plot[domain=0.99:2.17,variable=\t]({1*2.83*cos(\t r)+0*2.83*sin(\t r)},{0*2.83*cos(\t r)+1*2.83*sin(\t r)});
\draw [shift={(5.34,-1.76)}] plot[domain=0.86:2.29,variable=\t]({1*2.31*cos(\t r)+0*2.31*sin(\t r)},{0*2.31*cos(\t r)+1*2.31*sin(\t r)});
\draw [shift={(15,-2)}] plot[domain=0.92:2.21,variable=\t]({1*2.51*cos(\t r)+0*2.51*sin(\t r)},{0*2.51*cos(\t r)+1*2.51*sin(\t r)});
\draw (3.66,0.68) node[anchor=north west] {$s_0$};
\draw (9.5,0.66) node[anchor=north west] {$s_\infty$};
\draw (12.3,0.84) node[anchor=north west] {$s_1=s_0$};
\draw (16.24,0.76) node[anchor=north west] {$s_\infty$};
\draw (4.86,0.42) node[anchor=north west] {$\deg i'$};
\draw (7.1,0.34) node[anchor=north west] {$\deg (d-i')$};
\draw (14,0.42) node[anchor=north west] {$\deg (d-i')$};
\draw (12.3,0.04) node[anchor=north west] {$m_0=i'$};
\draw [->] (10.68,0.02) -- (12.36,0);
\draw (5.22,1.14) node[anchor=north west] {$s_1$};
\begin{scriptsize}
\fill [color=xdxdff] (10,0) circle (1.5pt);
\fill [color=qqqqff] (3.8,0) circle (1.5pt);
\fill [color=xdxdff] (16.52,0) circle (1.5pt);
\fill [color=xdxdff] (13.5,0) circle (1.5pt);
\fill [color=xdxdff] (5.22,0.55) circle (1.5pt);
\end{scriptsize}
\end{tikzpicture}

  As the images of the sections $s_0$ and $s_\infty$ are always disjoint, then so are the two different blow-up loci in  $\overline{M}_{0, \mathcal{A}(i',i'',i''-i')}(\mathbb P^n, d, a)$, and so the result of these two successive blow-ups $\overline{M}_{0, \mathcal{A}(i'-\frac{1}{2},i''+\frac{1}{2},i''-i'+\frac{1}{4})}(\mathbb P^n, d, a)$ is the Cartesian product of the two individual blow-ups along those two loci. As noted earlier, the blow-up loci are fixed by the $\CC^*$--action induced from the source and so  $\overline{M}_{0, \mathcal{A}(i',i'',i''-i')}(\mathbb P^n, d, a)$ also inherits such a $\CC^*$--action making all the morphisms in the diagram equivariant.

  The NE--oriented rational map with target $\overline{M}_{0, \mathcal{A}(i,i''',i'''-i)}(\mathbb P^n, d, a)$ can be understood as obtained by successively blow-ing up the loci for which $ i''\leq m_0 <i'''$ (and their strict transforms), in decreasing order of $m_0$, following by blow-downs of all exceptional divisors which had resolved the cases $m_0>i''$. Similarly for the NW--oriented maps and
   $ d-i' \leq m_\infty < d-i$. These loci are invariant under the $\CC^*$--actions so the rational maps are equivariant where defined. The same arguments are valid for the remaining dotted arrows in the diagram.
\end{proof}

\begin{definition} \label{notation_targets}

The two $\CC^*$--actions induced from source and target respectively on the moduli spaces in Lemma \ref{also2nd_action} together form an action of $\CC^* \times \CC^*$ on these spaces. Consider the inverse diagonal action of $\CC^*$ given by the
embedding  $\CC^*\to \CC^*\times \CC^*$ where $t \to (t, t^{-1})$.

For each pair of positive integers $(j,j')$ with $0<j<j'<k$ and their corresponding wall elements $i_j, i_{j'} \in I$ in the image of the moment map of $X$, we define $X_{(j, j')}$ to be the subspace of $(\overline{M}_{0, \mathcal{A}(i_j,i_{j'},i_{j'}-i_j)}(\mathbb P^n, d, a))^{\CC^*}$ whose points parametrize $\CC^*$-fixed weighted stable maps to $X$ which are both flow-preserving and action-class-preserving, in the sense of Definition \ref{M}. Similarly to the embedding
\bea X_{(j, j')}  \hookrightarrow \overline{M}_{0, \mathcal{A}(i_j,i_{j'},i_{j'}-i_j)}(\mathbb P^n, d, a)^{\CC^*}, \eea
 we also define the following:  \bea & & X_{(j, j'+\frac{1}{2})}  \hookrightarrow  \overline{M}_{0, \mathcal{A}(i_j,i_{j'}+\frac{1}{2},i_{j'}-i_j+\frac{1}{4})}{(\mathbb P^n, d, a)}^{\CC^*},  \\
& & X_{(j-\frac{1}{2}, j')} \hookrightarrow  \overline{M}_{0, \mathcal{A}(i_j-\frac{1}{2},i_{j'},i_{j'}-i_j+\frac{1}{4})}{(\mathbb P^n, d, a)}^{\CC^*}, \mbox{ and } \\
& &X_{(j-\frac{1}{2}, j'+\frac{1}{2})} \hookrightarrow \overline{M}_{0, \mathcal{A}(i_j-\frac{1}{2},i_{j'}+\frac{1}{2},i_{j'}-i_j+\frac{1}{4})}{(\mathbb P^n, d, a)}^{\CC^*}.  \eea

\end{definition}

\begin{corollary} \label{DiamondX}
Let $j, j'$ be two integers with $0<j<j+1<j'-1<j'<k$ and consider the following walls in $I$:
\bea   i=i_j \mbox{, } i'=i_{j+1}  \mbox{, } i''=i_{j'-1} \mbox{ and  }  i'''=i_{j'}. \eea
The diagram in Lemma \ref{also2nd_action} restricted to the flow-preserving, action-class-preserving part of the fixed locus yields the following four Cartesian squares:

\begin{center} \begin{tikzpicture}[line cap=round,line join=round,>=triangle 45,x=.7cm,y=.7cm]
\clip(4.59,-3.78) rectangle (16.92,5.85);
\draw [color=qqqqff](7.83,3.63) node[anchor=north west] {$X_{(j,j'-\frac{1}{2})}$};
\draw [color=qqqqff](12.12,3.69) node[anchor=north west] {$X_{(j+\frac{1}{2}, j')}$};
\draw (5.88,1.62) node[anchor=north west] {$X_{(j, j'-1)}$};
\draw [color=ffqqqq](10.29,1.65) node[anchor=north west] {$X_{(j+\frac{1}{2}, j'-\frac{1}{2})}$};
\draw (14.16,1.62) node[anchor=north west] {$X_{(j+1,j')}$};
\draw [color=qqqqff](7.95,-0.36) node[anchor=north west] {$X_{(j+\frac{1}{2}, j'-1)}$};
\draw (10.14,5.58) node[anchor=north west] {$X_{(j,j')}$};
\draw [color=qqqqff](11.94,-0.36) node[anchor=north west] {$X_{(j+1,j'-\frac{1}{2})}$};
\draw (9.75,-2.43) node[anchor=north west] {$X_{(j+1,j'-1)}$};
\draw [->] (8.94,3.36) -- (10.02,4.35);
\draw [->] (12.45,3.5) -- (11.55,4.44);
\draw [->] (7.95,2.37) -- (6.83,1.29);
\draw [->] (11.15,1.43) -- (12.27,2.37);
\draw [->] (13.58,2.37) -- (14.57,1.38);
\draw [->] (10.2,1.47) -- (9.26,2.33);
\draw [->] (11.25,0.4) -- (12.17,-0.56);
\draw [->] (10.16,0.48) -- (9.03,-0.55);
\draw [->] (12.21,-1.65) -- (11.1,-2.71);
\draw [->] (13.22,-0.69) -- (14.52,0.3);
\draw [->] (7.95,-0.6) -- (7.01,0.44);
\draw [->] (8.96,-1.71) -- (9.9,-2.61);
\end{tikzpicture}
\end{center}
\end{corollary}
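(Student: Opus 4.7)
My plan is to deduce the corollary from Lemma \ref{also2nd_action} by restricting the diamond of moduli spaces of weighted stable maps to the subspaces $X_{(\ldots)}$, and then checking that all four rhombic squares in the restricted diamond become Cartesian. The argument has two main components: establishing that each square in the ambient moduli diamond is Cartesian after restriction, and showing that the restriction procedure itself preserves fibre products.

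For the restriction procedure, I would observe that each space $X_{(j,j')}$ (and its half-integer relatives) is obtained from the ambient weighted stable map space by three successive operations: base change along $X \hookrightarrow \PP^n$, which cuts out the locus of maps factoring through $X$; taking the fixed locus of the inverse-diagonal $\CC^*$-action introduced in Definition \ref{notation_targets}; and passing to the open substack cut out by the flow-preserving and action-class-preserving conditions, with openness established essentially as in Corollary \ref{1stpyramid}. Each of these operations commutes with fibre products over the base diagram: base change along a closed immersion is standard, taking $\CC^*$-fixed loci commutes with fibre products for torus actions on smooth Deligne--Mumford stacks, and restriction to a compatible family of open substacks preserves Cartesianness. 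The compatibility of the flow-preserving and action-class-preserving conditions with the morphisms in the diamond follows from tracing what happens to a fibre under a blow-up or blow-down: the underlying marked chain of rational curves is merely refined or contracted, and the configuration of fixed points dictating these conditions is inherited through the morphisms.

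For Cartesianness at the ambient moduli level, the bottom square of the diamond is handled directly by Lemma \ref{also2nd_action}, which identifies the central space as the result of two independent weighted blow-ups along disjoint loci (where $s_1 = s_0$ with $m_0 = i'$, and where $s_1 = s_\infty$ with $m_\infty = d - i''$). For each of the three upper squares, I would factor the NE- or NW-oriented rational map through the sequence of weighted blow-ups and blow-downs described in the proof of the lemma; after restriction to the flow-preserving and action-class-preserving $\CC^*$-fixed locus in $X$, these rational maps become honest morphisms, because the indeterminacy loci parametrize configurations violating action-class-preservation. Each resulting square then fits as a composition of elementary Cartesian squares arising from blow-ups along disjoint loci, invoking the same disjointness argument used for the bottom square.

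The main technical obstacle I anticipate is verifying in detail that the three upper squares are Cartesian rather than merely commutative: first that the indeterminacies of the NE/NW rational maps really do disappear upon restriction to $X_{(\ldots)}$, and second that the resulting morphisms satisfy the universal property of a fibre product. This requires tracking $\CC^*$-fixed flow-preserving weighted stable maps through the intermediate moduli spaces, and showing that such a map into, say, $X_{(j+\frac{1}{2},\,j'-\frac{1}{2})}$ is uniquely reconstructible from its images in $X_{(j,\,j'-\frac{1}{2})}$ and $X_{(j+\frac{1}{2},\,j')}$. Once this uniqueness and existence have been checked, all four squares are Cartesian at the level of moduli spaces, and the restriction procedure described above transports the Cartesian property to the diamond of $X_{(\ldots)}$.
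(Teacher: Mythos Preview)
Your proposal is essentially correct and follows the same overall strategy as the paper, but the paper's execution is more concrete in two places where your outline stays abstract.

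First, on why the rational maps become honest morphisms after restriction: rather than a general appeal to ``violating action-class-preservation,'' the paper pinpoints that the indeterminacy locus of, say, $X_{(j,j'-\frac{1}{2})} \dashrightarrow X_{(j,j')}$ corresponds to the condition $i_{j'-1} < m_0 < i_{j'}$. For a $\CC^*$-fixed flow-preserving map the multiplicity $m_0$ must be a wall value in $I$, and since $i_{j'-1}$ and $i_{j'}$ are \emph{consecutive} walls, no such value lies strictly between them. That is the mechanism, and it depends crucially on the hypothesis that the chosen indices are adjacent walls.

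Second, for Cartesianness of the upper three squares, the paper does not factor through compositions of elementary blow-up squares as you propose. Instead it argues directly: the images $Z_{(j,j'-\frac{1}{2})}$ and $Z_{(j+\frac{1}{2},j')}$ of the two exceptional loci in $X_{(j,j')}$ correspond to $m_0 = i_{j'-1}$ and $m_\infty = d - i_{j+1}$ respectively, and these cannot hold simultaneously since $i_{j'-1} > i_{j+1}$ would force $m_0 + m_\infty > d$. Disjointness of the exceptional loci then yields the fibre product by a gluing argument, and the same reasoning handles the remaining squares. Your framework of ``restriction commutes with fibre products'' is only directly applicable to the bottom square, since that is the only one Cartesian at the ambient level in Lemma~\ref{also2nd_action}; for the others the work happens entirely after restriction, via this disjointness observation.
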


\begin{proof}
The conditions of flow and action-class preservations are compatible with the maps in Lemma \ref{also2nd_action} leading to the diagram above. We check that all the arrows in this diagram represent well defined morphism.

Indeed,  the points where
$\overline{M}_{0, \mathcal{A}(i,i''+\frac{1}{2},i''-i+\frac{1}{4})}(\mathbb P^n, d, a) \dashrightarrow\overline{M}_{0, \mathcal{A}(i,i''',i'''-i)}(\mathbb P^n, d, a)$ is not well defined correspond to the condition
$i''< m_0 <i'''$ on the moduli problem. This locus intersects $X_{(j, j'-\frac{1}{2})}$ trivially
since $i''=i_{j'-1}$ and $i'''=i_{j'}$ are consecutive walls.
  Hence $X_{(j, j'-\frac{1}{2})} \to X_{(j, j')}$ is everywhere well-defined, and similarly for all the other maps in the diagram.

 We denote by $Z_{(j, j'-\frac{1}{2})}$ and $Z_{(j+\frac{1}{2}, j')}$ the images of the exceptional loci of the two maps to $X_{(j, j')}$. Note that $Z_{(j, j'-\frac{1}{2})}$ and $Z_{(j+\frac{1}{2}, j')}$  correspond to conditions $m_0=i_{j'-1}$ and $m_\infty=d-i_{j+1}$ respectively, which cannot be satisfied simultaneously since $i_{j'-1}>i_{j+1}$.  Hence the upper square in the diagram is Cartesian, as $X_{(j+\frac{1}{2}, j'-\frac{1}{2})}$ can be thought of as constructed by gluing $X_{(j, j'-\frac{1}{2})}$ and $X_{(j+\frac{1}{2}, j')}$ along $X_{(j, j')}\setminus (Z_{(j, j'-\frac{1}{2})} \bigcup Z_{(j+\frac{1}{2}, j')})$.
Similar arguments show that all the other squares in the diagram are Cartesian.

\end{proof}

\begin{lemma} \label{action_on_X0k}
There is a $\CC^*$-equivariant map $X_{(0,k)} \to X$  such that $X_{(0,k)}$ is the weighted blow-up of $X$ along the source and the sink components of the fixed locus.

\end{lemma}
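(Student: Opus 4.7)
The plan is to build the map by evaluating at the middle marked point, verify equivariance, and then recognize the result as the weighted blow-up by comparing fibres. First, by Lemma \ref{2nd_action} the ambient space $\overline{M}_{0, \mathcal{A}(0,d,d)}(\PP^n, d, a)$ parametrizes weighted stable maps from a single $\PP^1$ with marked points $s_0, s_1, s_\infty$. The weights satisfy $a_0(d) + a_1(d) = 1$ and $a_\infty(0) + a_1(d) = 1$, so no base locus of $e$ can lie at $s_1$, and evaluation at $s_1$ yields a morphism $\text{ev}_1: X_{(0,k)} \to X$ sending $[\varphi, (s_0, s_1, s_\infty), e]$ to $\varphi(s_1)$. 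The inverse-diagonal fixed condition translates to $\CC^*$-equivariance $\varphi(tz) = t \cdot \varphi(z)$, and the residual $\CC^*$-action on $X_{(0,k)}$ (target action, which agrees with the source action on this fixed locus) makes $\text{ev}_1$ equivariant via the identity $(t\cdot\varphi)(s_1) = t \cdot \varphi(s_1)$.

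Second, I would analyze the fibres of $\text{ev}_1$. Over $x \in X \setminus (X_0 \cup X_k)$, Lemma \ref{classes of orbit maps} gives a unique action map $f_x: \PP^1 \to X$ of degree $d$; placing $(s_0, s_1, s_\infty) = (0, 1, \infty)$ with $f_x(1) = x$ determines the unique preimage in $X_{(0,k)}$, since any inverse-diagonally fixed weighted stable map whose image avoids the fixed locus except at $s_0, s_\infty$ must be an action map, by an argument analogous to Lemma \ref{bottom_line} adapted to the three-marked-point setting. Over $x_0 \in X_0$, a local model of $X$ is provided by the affine bundle $X_0^+$ of Notation \ref{action_strata}, whose normal bundle $\cN_{X_0|X_0^+}$ decomposes into $\CC^*$-weight eigenspaces. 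Limits of action maps $f_{x_n}$ with $x_n \in X_0^+$ approaching $x_0$ along a direction $v \in \cN_{X_0|X_0^+}|_{x_0}$ depend only on $v$ up to the weighted $\CC^*$-scaling, so the fibre at $x_0$ is the weighted projective space $\PP^w(\cN_{X_0|X_0^+}|_{x_0})$ with $w$ the weight tuple. A symmetric description applies at each $x_k \in X_k$ using $\cN_{X_k|X_k^-}$.

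Third, I would assemble these fibrewise identifications into the global weighted blow-up statement in the sense of \cite{noi5}. On an equivariant affine chart around a component of $X_0$, the preimage in $X_{(0,k)}$ matches the standard weighted blow-up of the local normal bundle model along its zero section, with exceptional divisor the weighted projective fibration $\PP^w(\cN_{X_0|X_0^+})$. The main obstacle is verifying that these local weighted blow-ups glue coherently into a genuine weighted blow-up of $X$. This reduces to the intrinsic character of the $\CC^*$-weights on the normal bundle, which are invariants of the $\CC^*$-action on $X$ itself, together with the disjointness of $X_0$ and $X_k$, which permits the two weighted blow-ups to be performed independently and commute. This identifies $X_{(0,k)}$ with the weighted blow-up of $X$ along $X_0 \cup X_k$ carrying the weights induced by the $\CC^*$-action on the respective normal bundles.
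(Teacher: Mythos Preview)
Your proposal is correct in substance and arrives at the same map as the paper---evaluation at the middle marked point $s_1$---but the route differs in one structurally useful way.

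The paper first realizes $X_{(0,k)} \to X$ as the restriction of a birational morphism between moduli spaces of weighted stable maps, obtained by lowering weights. Concretely, it embeds $X \hookrightarrow \PP^n \hookrightarrow \PP^n_d$ into the linear sigma model $\PP^n_d = \PP^{(n+1)(d+1)-1}$ via the explicit coordinate assignment $x_i \mapsto y_i^l$ with $y_i^l = x_i$ if $l=d_i$ and $0$ otherwise. It then identifies $\PP^n_d$ itself as a three-pointed weighted stable map space for a suitable chamber of weights, so that the map $\ol{M}_{\cA(0,d,d)}(\PP^n,d,a)\to \PP^n_d$ is the standard birational contraction between chambers. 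Restricting everything to the flow- and action-class-preserving fixed loci gives a commutative square with $\PP^n_{(0,k)}\to \PP^n$ on the bottom, and on that bottom row the weighted blow-up statement can be checked \emph{in coordinates}. The result for $X$ then follows by pulling back along the equivariant embedding $X\hookrightarrow \PP^n$.

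Your approach instead works intrinsically on $X$: you identify the fibres of $\mathrm{ev}_1$ directly and then appeal to local models to glue them into a weighted blow-up. This is conceptually clean and avoids the auxiliary space $\PP^n_d$, but your third paragraph is where the paper's route earns its keep. You acknowledge that the ``main obstacle'' is the coherent gluing of local weighted blow-up charts, and you resolve it by invoking the intrinsic nature of the $\CC^*$-weights. That is morally right, but it stops short of exhibiting $X_{(0,k)}$ as $\Proj$ of a weighted Rees algebra over $X$, or otherwise pinning down the global structure in the sense of \cite{noi5}. The paper sidesteps this entirely: by reducing to $\PP^n_{(0,k)}\to \PP^n$, the weighted blow-up is visible at the level of homogeneous coordinates, and the statement for $X$ is inherited by restriction. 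If you want to keep your intrinsic argument, you should either supply the Rees-algebra description explicitly, or note that the embedding $X\hookrightarrow\PP^n$ allows you to borrow the coordinate verification from the ambient projective space.
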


\begin{proof}
For each point $x\in X$, the action-map $\varphi_x: \CC^* \to X$, given by $t\to t\cdot x$, can be represented by a point in $\PP^n_d$. We can define this explicitly as follows:  Let $d_0, ..., d_n$ be the weights of the $\CC^*$ action on $\PP^n$ for which  the embedding $X \hookrightarrow \PP^n$ is $\CC^*$--equivariant. Then we have an equivariant map
\bea  X \hookrightarrow \PP^n & \hookrightarrow & \PP^n_d, \\
 \left[(x_i)_{i\in \{0,...,n\} }\right] & \to & \left[(y_i^l)_{i\in \{0,...,n\}, l\in \{0,...,n\} }\right] \eea
where $y_i^l=\left\{ \begin{array}{l} x_i \mbox{ if } l=d_i, \\ 0 \mbox{ otherwise. } \end{array}\right.$

We think of $\PP^n_d$ as a space of weighted stable maps with 3 marked points $\ol{M}_{\cA}(\PP^n, d, a)$ with $0<a<\frac{1}{2d}$ and $0<a_i<1-da$ as well as $a_i+a_l>1$ for all $i,l\in \{0, \infty, 1\}$. Then with the embedding above, $\PP^n$ is the flow and action-class--preserving part of the fixed locus of $\PP^n_d$, for the inverse-diagonal action introduced earlier, and $X$ plays the similar role for $\ol{M}_{\cA}(X, \beta, a)$.

In the choice of weights for $\PP^n_d$, we can choose $a_1$ small enough to justify the existence of a $\CC^*$--equivariant birational morphism
\bea  \ol{M}_{\cA(0, d, d)}(\PP^n, d, a) \to \PP^n_d, \eea
Restricting to  $\ol{M}_{\cA(0, d, d)}(X, \beta, a)$, and then to the flow and action-class preserving parts of the fixed locus, we obtain a commutative diagram
\bea   \diagram  X_{(0,k)} \rto \dto & X \dto \\ \PP^n_{(0,k)} \rto & \PP^n    \enddiagram \eea
of $\CC^*$-equivariant morphisms. The action of $\CC^*$ on the moduli spaces is  $X_{(0,k)}$ and $\PP^n_{(0,k)}$ is induced from the target, which in this case is the same as the one induced from the source of the weighted stable maps.

The morphism $\PP^n_{(0,k)} \to  \PP^n $ can be described very explicitly in coordinates as the weighted blow-up of $\PP^n$ along the source and the sink of $\PP^n$, and similarly for $X_{(0,k)} \to X$. Indeed, the two exceptional divisors in  $X_{(0,k)}$ parametrize  orbits in $X$  with $s_1=s_0\in X_0$ and  $s_1=s_\infty \in X_\infty$ respectively, while the map $X_{(0,k)} \to X$ is exactly the evaluation map at $s_1$.
Thus the exceptional divisors are exactly the weighted projective fibrations
\bea  [(X_0^+\setminus X_0)/\CC^*] \to X_0 \mbox{ and } [(X_\infty^-\setminus X_\infty)/\CC^*] \to X_\infty. \eea
\end{proof}

From now on, unless explicitly stated otherwise, all spaces of weighted stable maps and their subspaces will be considered with the $\CC^*$--action induced from the action on the source.

\begin{proposition} \label{evaluation_maps}
Let $j, j'$ be positive integers such that $0<j<j'<k$ and let  $\cU_{(j,j')}(X)$ denote the universal family over $M_{(j,j')}(X)$, with evaluation map \bea \mbox{ev}_{(j,j')}: \cU_{(j,j')}(X) \dashrightarrow X \eea well defined everywhere except on the images of $s_0$ and $s_\infty$. The following properties hold for $X_{(j,j')}$:
\begin{itemize}
\item[a)] There is a well defined, $\CC^*$--equivariant map $\varphi_{(j,j')} : \cU_{(j,j')}(X) \to X_{(j,j')}$.
\item[b)] There is a $\CC^*$--equivariant birational map $X_{(j,j')} \dashrightarrow X$ defined everywhere outside the source and sink of $X_{(j,j')}$. Where defined, this map is an isomorphism on its image.
  \item[c)] Via the above isomorphism, the restriction of $\varphi_{(j,j')}$ to $\cU_{(j,j')}\setminus (\mbox{ Im } s_0 \bigcup \mbox{ Im } s_\infty )$ is identified with $\mbox{ev}_{(j,j')}$.
\end{itemize}
\end{proposition}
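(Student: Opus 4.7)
The plan is to realize $\varphi_{(j,j')}$ as the classifying morphism of a 3-pointed family obtained from the universal 2-pointed family over $M_{(j,j')}(X)$ by appending the tautological diagonal as the new section $s_1$, and to recognize $X_{(j,j')}$ as a birational model of $X$ via evaluation at this third marked point.

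For part (a), I would form the pullback family $\widetilde{\pi}: \widetilde{\cU} := \cU_{(j,j')}(X) \times_{M_{(j,j')}(X)} \cU_{(j,j')}(X) \to \cU_{(j,j')}(X)$ carrying three sections: the pullbacks $\widetilde{s}_0, \widetilde{s}_\infty$ of $s_0, s_\infty$, and the tautological diagonal section $\widetilde{s}_1$. Together with the pullback of $(\cL, e)$ this is a 3-pointed family of prestable maps. Composition with the natural contraction morphism from the moduli problem with weights $(1,1,a_1)$ (for $a_1$ small enough to lie in the same chamber as $a_1(i_{j'}-i_j)$) to the one with weights $\cA(i_j,i_{j'},i_{j'}-i_j)$ produces a family of $\cA(i_j,i_{j'},i_{j'}-i_j)$-weighted stable maps whose fibres are single $\PP^1$s by \lemref{2nd_action}. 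Since the source $\CC^*$-action on $\cU_{(j,j')}(X)$ permutes points on each fibre and only $\widetilde{s}_1$ moves with it, the resulting classifying morphism is $\CC^*$-equivariant for the source action on the target. The flow and action-class preserving conditions are inherited from $M_{(j,j')}(X)$ because the moment-map endpoints and the total class are preserved under contraction, so the morphism factors through $X_{(j,j')}$.

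For part (b), by \lemref{2nd_action} the universal family $\cC_X \to X_{(j,j')}$ is a family of $\PP^1$s with three sections. The argument from the proof of \thmref{consecutive} applies verbatim to show that on every fibre $\dim \cG_{s_0} = \mu(\varphi(s_0))$ and $\dim \cG_{s_\infty} = d - \mu(\varphi(s_\infty))$, so the support of $\cG = \Coker e$ is contained in $\mbox{ Im } s_0 \cup \mbox{ Im } s_\infty$. Evaluation at $s_1$ is therefore a well-defined morphism on the complement of this support in $\cC_X$, inducing $X_{(j,j')} \setminus (\mbox{source} \cup \mbox{sink}) \to X$. An inverse, modelled on \lemref{action_on_X0k}, sends a point $x \in X$ outside the source and sink of $X$ to the weighted stable map supported on the closure of the orbit through $x$, with $s_1 = x$, $s_0, s_\infty$ its two limits, and base-locus multiplicities forced as in the proof of \thmref{consecutive}. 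Equivariance follows by naturality. Part (c) is then immediate: since $\widetilde{s}_1$ was chosen as the diagonal, the composition $\cU_{(j,j')}(X) \to X_{(j,j')} \to X$ coincides with $\mbox{ev}_{(j,j')}$ on its domain of definition.

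The main obstacle I expect is the verification in part (a) that after restabilization the resulting family lies in the flow and action-class preserving substack $X_{(j,j')}$, rather than in some other component of the fixed locus. Concretely, one must redo the casework of \lemref{2nd_action} while tracking how the multiplicities at $s_0, s_\infty$ absorb the degrees of the contracted components, and verify that the identity $\mu(\varphi(s_0)) + (d - \mu(\varphi(s_\infty))) + \deg \varphi_*[\PP^1] = d$ on the surviving $\PP^1$ is compatible with the action-class preserving condition.
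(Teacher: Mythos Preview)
Your proposal is essentially the same as the paper's proof. For part~(a), the paper also realizes $\cU_{(j,j')}(X)$ as sitting inside a $3$-pointed weighted moduli space (which is your pullback-plus-diagonal construction, rephrased) and then invokes the contraction morphism coming from a comparison of weights; your explicit formation of $\widetilde{\cU}$ is the standard way to say the same thing. One small imprecision: the weights carried by $\cU_{(j,j')}(X)$ are not $(1,1,a_1)$ but rather $\cA^a(i_j,i_{j'})=(1-ai_j,\,1-a(d-i_{j'}),\,a_1)$ with $a_1$ small; the paper uses exactly this and checks directly that these dominate the target weights $\cA(i_j,i_{j'},i_{j'}-i_j)$ once $a<\tfrac{1}{2d}$.

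For part~(b), you go straight to evaluation at $s_1$ and invert via the action-map, while the paper takes a slightly more indirect route: it first links $X_{(j,j')}$ to $X_{(0,k)}$ through the chain of birational maps in Corollary~\ref{DiamondX}, observes that these maps are isomorphisms away from source and sink, and then invokes Lemma~\ref{action_on_X0k} for the last step $X_{(0,k)}\to X$. Your direct approach is shorter; the paper's route has the advantage of exhibiting the whole network of $X_{(l,l')}$'s as birational models of $X$ simultaneously, which feeds into the later inverse-limit arguments. Either way the content of part~(c) is immediate, and the paper's formulation there (identifying a point $b$ with $s_1(b)\in X$) matches yours exactly. The obstacle you flag---checking that the contracted family remains flow- and action-class-preserving---is real but mild, and the paper does not belabor it either; the point is that contraction only alters the multiplicities $m_0,m_\infty$ by absorbing degrees of collapsed components, preserving both the endpoints $\mu(\varphi(s_0)),\mu(\varphi(s_\infty))$ and the total class.
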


\begin{proof}
a) For any positive real number $a$ such that $0\leq a\leq \frac{1}{d}$ we have \bea  \cU_{(j,j')}(X) \hookrightarrow \ol{M}_{\cA^a(i_j, i_{j'})}(\PP^n, d, a)\eea where $\cA^a(i_j, i_{j'})$ can be chosen to be any triple $(a_0, a_\infty, a_1)$ satisfying
\bea  a_0 \in (1-a(i_j+1), 1-ai_j]\mbox{, } a_\infty \in (1-a(d-i_{j'}+1), 1-a(d-i_{j'})]\mbox{, } 0\leq a_1 <1-da, \eea
while for  $0\leq a< \frac{1}{2d}$  we have $X_{(j,j')} \hookrightarrow \ol{M}_{\cA(i_j, i_j', i_j'-i_j)}(\PP^n, d, a)$ with
\bea  a_0 \in (1-ai_{j'}, 1-a(i_{j'}-1)], a_\infty \in (1-a(d-i_{j}-1), 1-a(d-i_{j})],  a_1=a(i_{j'}-i_j-1). \eea
As $i_j+1 \leq i_{j'}$ and by choice $a<\frac{1}{2d}$, the weights in $\cA^a(i_j, i_{j'})$ are larger than those in $\cA(i_j, i_j', i_j'-i_j)$, justifying the existence of a birational morphism
\bea \ol{M}_{\cA^a(i_j, i_{j'})}(\PP^n, d, a) \to  \ol{M}_{\cA(i_j, i_j', i_j'-i_j)}(\PP^n, d, a).  \eea
Restriction to the flow and action-class--preserving part of the fixed point locus in the moduli spaces of target $X$ leads to a morphism $\varphi_{(j,j')} : \cU_{(j,j')}(X) \to X_{(j,j')}$.

At the level of moduli problems, the morphism $\varphi_{(j,j')} : \cU_{(j,j')}(X) \to X_{(j,j')}$  consists in contracting each chain of $\PP^1$-s with marked point $s_1$ down to the component containing $s_1$, while adding the degrees of the contracted components to the new multiplicities of $s_0$ and $s_\infty$.

b) The components of the fixed point locus of $X_{(j,j')}$ are the following:
\begin{itemize}
\item The source and sink, defined by the conditions $s_0=s_1$ and $s_1=s_\infty$, respectively.
\item The remaining loci, parametrizing $\PP^1$-s together with contractions to a fixed point of $X$, with $m_0=i$ and $m_\infty=d-i$ for some $i\in I$.
\end{itemize}

We have already illustrated the existence of a chain of birational maps between  $X_{(j,j')}$ and $X_{(0,k)}$, so that the composition $X_{(j,j')} \dashrightarrow X_{(0,k)}$ is  defined everywhere except some subspaces of the source and sink. Indeed, in the diagram at Corollary \ref{DiamondX}, all rational maps which invert the NE and NW--oriented arrows have this property.

Finally, Lemma \ref{action_on_X0k} shows that $X_{(0,k)}$ is the weighted blow-up of $X$ along the source and the sink components of the fixed locus.

 c) Outside of the source and sink, we can identify each point $b$ of $X_{(j,j')}$ with $s_1(b) \in X$. Indeed, this map can be inverted by identifying $s_1(b)$ with its action-map $t\to t\cdot s_1(b)$. With this identification, the map $\varphi_{(j,j')}$ described in part (a) of the proof is identified with $\mbox{ev}_{(j,j')}$ on $\cU_{(j,j')}\setminus (\mbox{ Im } s_0 \bigcup \mbox{ Im } s_\infty )$.
\end{proof}

\begin{remark} \label{identification}
Let $(X_{(j,j')})_0$ and $(X_{(j,j')})_\infty$  denote the source and sink of $X_{(j,j')}$. Via the birational map $X_{(j,j')} \dashrightarrow X$, we can identify the open subspace  $X_{(j,j')}\setminus (X_{(j,j')})_0\bigcup (X_{(j,j')})_\infty)$ with $X_{(j,j')}^o:=(\bigcup_{l'\leq j' }X_{l'}^+) \bigcap (\bigcup_{l\geq j} X_{l}^- )$. Then  $X_{(j,j')}=\overline{X^o}_{(j,j')}$ is a compactification of this compatible with the $\CC^*$--action, constructed like in the first part of the proof of Theorem \ref{consecutive}. Indeed, the source and sink of $X_{(j,j')}$ are birational to the source and sink in $X_{(0,k)}$, and hence divisors (see Lemma \ref{action_on_X0k}). Hence $(X_{(j,j')})_0^+ \to (X_{(j,j')})_0$ is an affine fibration with  $\AA^1$ fibres, and similarly for $(X_{(j,j')})_\infty^- \to (X_{(j,j')})_\infty$.

\end{remark}

Next we describe the maps between the target spaces  $X_{(j,j')}$ in terms of the $\CC^*$--action on $X$.

\begin{notation}
We denote by $\overline{X_{j'}^+}$ the closure of ${X_{j'}^+}$ in $X_{(j,j'+1)}$. As an alternate definition, \bea  \overline{X_{j'}^+}\cong [\left( (X_{j'}^{+}\times \AA^1) \setminus (X_{j'}\times \{0\})\right)/\CC^*], \eea  up to isomorphisms the unique compactification of $X_{j'}^{+}$ to a projective fibration
over $X_{j'}$.
Indeed,
$\overline{X_{j'}^+}\setminus X_{j'} \subset (X_{(j,j'+1)})_\infty^-$, which is an affine fibration with one-dimensional fibres over $(X_{(j,j'+1)})_\infty$, and so $\overline{X_{j'}^+}\setminus X_{j'}$ is an $\AA^1$-fibration over its image in $(X_{(j,j'+1)})_\infty$. This image thus forms the divisor at infinity of  $\overline{X_{j'}^+} \to X_{j'}$.
\end{notation}

Recall the notation $Y^-_{(j'-1, j')}=[( X^-_{j'}-X_{j'}) /\CC^*]$.

\begin{theorem}
a) The exceptional locus of the two birational morphisms \bea  X_{(j,j')} \longleftarrow  X_{(j,j'+\frac{1}{2})} \longrightarrow  X_{(j,j'+1)} \eea is $E:= Y^-_{(j'-1, j')} \times_{X_{j'}} \overline{X_{j'}^+}$. The restrictions of the two morphisms to the exceptional locus give the top two maps in the Cartesian diagram
\bea \diagram  &&  Y^-_{(j'-1, j')} \times_{X_{j'}} \overline{X_{j'}^+} \dllto_{\ol{g_{j'}^+}} \drrto^{f_{j'}^-} && \\
Y^-_{(j'-1, j')} \drrto^{p_{j'}^{-}}  &&&& \overline{X_{j'}^+} \dllto_{\overline{q^+_{j'}}} \\ &&{X_{j'}}.&&
 \enddiagram \eea

 Both $p_{j'}^{-}$ and $\overline{q^+_{j'}}$ are weighted projective fibrations over $X_{j'}$.

b) Both morphisms $ X_{(j,j')} \longleftarrow  X_{(j,j'+\frac{1}{2})} \longrightarrow  X_{(j,j'+1)} $ are weighted blow-ups along the smooth loci $Y^-_{(j'-1, j')}$ and $\overline{X_{j'}^+}$, respectively, with the weights induced from the $\CC^*$--action. All spaces are smooth Deligne-Mumford stacks.

c) Let $\cF$ denote the conormal bundle of $E$ in $X_{(j,j'+\frac{1}{2})}$, and let $\pi$ denote the morphism $X_{(j,j'+\frac{1}{2})}\to X_{(j,j'+1)}$. The associated affine fibration over the blow-up locus $\overline{X_{j'}^+}$, defined as $A=\Spec (\oplus_{n\geq 0} \pi_* \cF^n)$, satisfies the property
\bea A\otimes\cO_{\ol{X_{j'}^+}}(-1):= \Spec (\oplus_{n\geq 0} \pi_* \cF^n \otimes\cO_{\ol{X_{j'}^+}}(-n)) \cong X^-_{j'} \times_{X_{j'}}\overline{X_{j'}^+}, \eea
with the natural $\CC^*$--action induced from the action on $X$.
The normal bundle of the zero section in  $A$ is  $\overline{q^+_{j'}}^*\cN_{X_{j'}|X}^-\otimes \cO_{\ol{X_{j'}^+}}(1)$.

Similarly, if $B \to Y^-_{(j'-1, j')}$ is the affine fibration associated to the weighted blow-up $X_{(j,j'+\frac{1}{2})} \to X_{(j,j')} $, then
\bea B\otimes\cO_{Y^-_{(j'-1, j')}}(-1) \cong Y^-_{(j'-1, j')}\times_{X_{j'}}(X^+_{j'}\oplus \cO_{X_j'}).\eea
The normal bundle of the zero section in $B$ is $p_{j'}^{- *}(\cN_{X_{j'}|X}^+\oplus \cO_{X_{j'}})\otimes \cO_{Y^-_{(j'-1, j')}}(1)$.

\end{theorem}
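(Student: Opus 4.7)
The plan is to deduce all three parts from Lemma \ref{also2nd_action} and the local description of weighted blow-ups in \cite{noi5}, by restricting ambient maps between $\overline{M}_{0,\cA}$-spaces to the flow-preserving, action-class-preserving portion of the $\CC^*$-fixed locus. For part (a), the contraction $X_{(j,j'+\frac{1}{2})} \to X_{(j,j')}$ is obtained by restricting the SW-oriented blow-down in Lemma \ref{also2nd_action} with $i=i_j$, $i''=i_{j'}$. The exceptional locus there parametrizes curves $C=C_1\cup C_2$ glued at a node $p$, with $s_0\in C_1$ of degree $i_{j'}$ and $s_1,s_\infty\in C_2$ of degree $d-i_{j'}$. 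Flow-preservation and action-class preservation then force $\varphi(p)\in X_{j'}$, with $\varphi|_{C_1}$ an orbit ending in $X_{j'}$ (parametrized, after quotienting by the source $\CC^*$-action, by $Y^-_{(j'-1,j')}$) and $\varphi|_{C_2}$ an orbit starting in $X_{j'}$ (parametrized by $\overline{X_{j'}^+}$, where the compactification point corresponds to the limit $s_1=s_\infty$). Matching along the common image in $X_{j'}$ gives $E=Y^-_{(j'-1,j')}\times_{X_{j'}}\overline{X_{j'}^+}$. The analogous splitting analysis applies to $X_{(j,j'+\frac{1}{2})}\to X_{(j,j'+1)}$: the geometric wall is at $i_{j'}$ on both sides, so the same $E$ is produced. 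Finally, $p_{j'}^-$ and $\overline{q^+_{j'}}$ are weighted projective fibrations over $X_{j'}$ by construction, with weights coming from the eigenvalue decomposition of $\cN^\pm_{X_{j'}|X}$ discussed after Notation \ref{action_strata}.

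For part (b), each birational map in Lemma \ref{also2nd_action} is a weighted blow-up of its ambient moduli stack along a smooth locally embedded centre (the splitting locus), and restricting to a $\CC^*$-invariant closed/open substack preserves this weighted blow-up structure. This identifies our two morphisms as weighted blow-ups along the smooth centres $Y^-_{(j'-1,j')}$ and $\overline{X_{j'}^+}$, respectively, with weights induced from the $\CC^*$-action on $X$. Smoothness of all three spaces follows inductively: Lemma \ref{action_on_X0k} gives $X_{(0,k)}$ as a weighted blow-up of the smooth $X$ along smooth loci, hence smooth; the pyramid of Cartesian squares in Corollaries \ref{1stpyramid} and \ref{DiamondX} then propagates smoothness through the whole family by iterated weighted blow-ups of smooth Deligne-Mumford stacks along smooth centres, appealing to the local theory of \cite{noi5}.

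Part (c) is the most technical. The plan is to identify the intrinsic affine fibration $A=\Spec(\oplus_{n\geq 0}\pi_*\cF^n)$ of \cite{noi5}, associated to the blow-up with centre $\overline{X_{j'}^+}$, with the explicit space $X^-_{j'}\times_{X_{j'}}\overline{X_{j'}^+}$ after the tautological twist $\cO_{\overline{X_{j'}^+}}(-1)$. The key step is to recognise that the conormal filtration $\cF^n$ on $E$, restricted fibrewise over $\overline{X_{j'}^+}$, encodes the infinitesimal deformations of an orbit approaching $X_{j'}$ from the $X^-_{j'}$ side; this datum is precisely $\overline{q^+_{j'}}^*\cN_{X_{j'}|X}^-$ twisted by the tautological bundle on $\overline{X_{j'}^+}$ that records the $\CC^*$-weight along the splitting direction. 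Once this identification is in place, the normal bundle formula follows by reading off the zero-section neighbourhood. The argument for $B$ is analogous, with the extra $\cO_{X_{j'}}$ summand accounting for the additional $\AA^1$-direction at the sink of $\overline{X_{j'}^+}\to X_{j'}$ from Remark \ref{identification}. The main obstacle will be tracking the $\CC^*$-equivariant line bundle twists correctly and verifying that the weights induced from the source of the weighted stable maps agree with those from the $\CC^*$-action on $X$.
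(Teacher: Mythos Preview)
Your outline for part (a) matches the paper's argument and is correct: both identify the exceptional locus by reading off the splitting type from Lemma \ref{also2nd_action} and then restricting to the flow-- and action-class--preserving fixed locus.

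Part (b) has a genuine gap. You assert that ``restricting to a $\CC^*$-invariant closed/open substack preserves this weighted blow-up structure,'' but this is precisely the point that requires work. A weighted blow-up is governed by an increasing filtration $\{\cI_l\}_{l>0}$ of the ideal sheaf of the centre, and to get a weighted blow-up of the substack $X_{(j,j'+1)}$ with smooth Deligne-Mumford total space one must verify the compatibility conditions of \cite{noi5}, Lemma 2.5, which in this setting reduce to checking $j^{-1}(\cI_l\cap \cI^2)=j^{-1}\cI_l\cap j^{-1}\cI^2$ for the embedding $j:X_{(j,j'+1)}\hookrightarrow M$. The paper does this by observing that the filtration is given by the $\CC^*$-weights, so the grading of $\cI^+_{|\overline{X_{j'}^+}}$ is preserved on restriction to $X_{(j,j'+1)}$; only then can one identify the resulting weighted blow-up with $X_{(j,j'+\frac{1}{2})}$ inside $\widetilde{M}$. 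Your inductive smoothness argument via the pyramid is not needed once this is done, since smoothness of a weighted blow-up along a smooth centre follows directly from \cite{noi5}.

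Part (c) is presently a heuristic rather than an argument. The paper's proof is a concrete computation: it invokes the weighted blow-up diagram of \cite{noi5}, Lemma 2.8, restricts to the open set away from the sink to identify the picture with $\mbox{Bl}^w_{X_{j'}}X_{j'}^-\times_{X_{j'}}X_{j'}^+$, and then writes $\cN_{E|X_{(j,j'+\frac{1}{2})}}\cong \ol{g^+_{j'}}^*\cO(-1)\otimes \cO(-D)$ explicitly (determining the exponent of $D$ by restriction to fibres). The formulae for $A$ and $B$ then follow by dualising and pushing forward along $f_{j'}^-$ and $\ol{g^+_{j'}}$ respectively, using base change in the Cartesian square of part (a). Your phrase ``encodes the infinitesimal deformations of an orbit approaching $X_{j'}$'' is the right intuition, but to turn it into a proof you must actually compute $\cN_{E|X_{(j,j'+\frac{1}{2})}}$ and carry out these push-forwards; the ``extra $\cO_{X_{j'}}$ summand'' in $B$ arises from the identification $\overline{X_{j'}^+}=P^w(X_{j'}^+\oplus \cO_{X_{j'}})$, not merely from Remark \ref{identification}.
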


\begin{proof}

a)  Following the proof of Lemma \ref{also2nd_action}, the image of the exceptional locus of $ X_{(j,j'+\frac{1}{2})} \rightarrow  X_{(j,j'+1)}$ represents objects in $X_{(j,j'+1)}$ for which $\dim \Coker e_{s_0} = i_{j'}$. Via the identifications in Remark \ref{identification}, removing the sink from this locus yields $X^+_{j'}$, so the locus is exactly $\overline{X_{j'}^+}$, a weighted projective fibration over $X_{j'}$.

   On the other hand, the image of the exceptional locus of $ X_{(j,j'+\frac{1}{2})} \to  X_{(j,j')}$ represents objects in $X_{(j,j')}$  for which $s_1=s_\infty$ and $\dim \Coker e_{s_\infty} = d-i_{j'}$. This can be identified with the sublocus of $M_{(j', j'+1)}(X)$ parametrizing orbits in $X_{{j'}}^-$. We know $M_{(j', j'+1)}(X) \cong [X^s_{(j', j'+1)}/\CC^*]$ by Theorem \ref{consecutive} and by the same arguments, the blow-up locus is identified with $[(X_{{j'}}^-\setminus X_{j'})/\CC^*]$, which is a weighted projective fibration over $X_{{j'}}$.

   The shared exceptional locus for $X_{(j,j')}\leftarrow X_{(j,j'+\frac{1}{2})} \rightarrow  X_{(j,j'+1)}$ is thus \bea [(X_{{j'}}^-\setminus X_{j'}) /\CC^*] \times_{X_{j'}} \overline{X_{j'}^+}. \eea
This is a weighted projective fibration over the loci $[ (X_{j'}^-\setminus X_{j'})/\CC^*]$ and $\overline{X_{j'}^+}$; all three are smooth Deligne-Mumford stacks.

b) Recall the embedding $X_{(j, j'+1)} \hookrightarrow (\overline{M}_{0,\mathcal{A}(i_j,i_{j'+1},i_{j'+1}-i_j)}(\mathbb P^n, d, a))^{\CC^*}$. For simplicity, we will denote by $M$ the complement in $\overline{M}_{0,\mathcal{A}(i_j,i_{j'+1},i_{j'+1}-i_j)}(\mathbb P^n, d,a))^{\CC^*}$ of the locus made by stable map spaces with $m_0>i_{j'}$. Similarly, let
$\widetilde{M}$ denote the preimage of $M$ in $\overline{M}_{0, \mathcal{A}(i_j,i_{j'}+\frac{1}{2},i_{j'}-i_j+\frac{1}{4})}{(\mathbb P^n, d, a)}^{\CC^*}$. Then as remarked in Corollary \ref{DiamondX}, $X_{(j, j'+1)} \hookrightarrow M$ and $X_{(j, j'+\frac{1}{2})}\hookrightarrow \widetilde{M}$.


We know that $\widetilde{M}\to M$ is a $\CC^*$--equivariant weighted blow-up and the blow-up locus parametrizes weighted stable maps for which $m_0=i_{j'}$. Thus in terms of the $\CC^*$--action, the blow-up locus can be described as $\overline{M_{j'}^+}$, where
\begin{itemize}
\item $M_{j'}$ denotes the $\CC^*$--fixed point locus parametrizing weighted stable maps for which $m_0=i_{j'}$ and $m_\infty=d-i_{j'}$, the source being contracted to a fixed point of $\PP^n$.
\item $M_{j'}^+:=\{ c\in M\mbox{; } \lim_{t\to 0} t\cdot c\in M_{j'}\}$.
\item $\ol{M_{j'}^+}:=M_{j'}^{+} \bigcup \{  \lim_{t\to \infty} t\cdot c\mbox{; } c\in M_{j'}\}$.
\end{itemize}

Indeed, for a weighted stable map $c=[(C, \{s_0, s_\infty, s_1\}, e:\cO^{n+1}_C \to \cL)]$, we have $t\cdot c=[(C, \{s_0,  s_\infty, t\cdot s_1\}, e:\cO^{n+1}_C \to \cL)]$. If $c$ is in the blow-up locus, then at $ \lim_{t\to 0} t\cdot c$, the source contains a component $C_{01}$ of degree $0$ with $s_0, s_1\in C_{01}$  (note that $s_0\not=s_1$ due to the stability conditions). As well, $m_0=i_{j'}$ and the stability conditions imply $s_\infty\in  C_{01}$ and $m_\infty=d-i_{j'}$. On the other hand,  $ \lim_{t\to \infty} t\cdot c$ satisfies $s_1=s_\infty$ and so is a point in the sink of $M$.

Let $\cI$ denote the ideal sheaf of $M_{j'}$.  The $\CC^*$--action on $M$ induces a natural splitting $\cI=\cI^+\oplus\cI^-$. Here $\cI^+$ is the ideal of $\overline{M_{j'}^{+}}$ in $M$, and it admits a natural increasing filtration $\{\cI_l\}_{l> 0}$.  Each term $\cI_l$ can be described explicitly, in an \'{e}tale neighborhood  of ${M_{j'}}$  by $\CC^*$--invariant quasi-affine schemes,
as generated by the sections on which $\CC^*$ acts with weights $\geq l$.

 Consider the equivariant embedding $j: X_{(j, j'+1)} \hookrightarrow M$, so that, with the identifications from part (a), $j^{-1}\cI^+$ is the ideal of $\overline{X_{j'}^+}$ in $X_{(j, j'+1)}$. The ideal $j^{-1}\cI^+$ comes with a natural increasing filtration $\{j^{-1}\cI_l\}_{l> 0}$ compatible with the $\CC^*$--action.
 The construction of a smooth Deligne-Mumford stack structure  $\widetilde{X_{(j, j'+1)}}$ for the weighted blow-up given by this filtration  depends on the two technical conditions from \cite{noi5}, Lemma 2.5, which for $\{j^{-1}\cI_l\}_{l> 0}$ reduce to checking
 \bea j^{-1}(\cI_l\bigcap \cI^2)=  j^{-1}\cI_l\bigcap j^{-1}\cI^2.\eea
We may localize outside the sink and reduce the condition above to $j^{-1}(\cI_l\setminus \cI_{l+1})=j^{-1}(\cI_l)\setminus j^{-1}(\cI_{l+1})$ for each index $l\geq 1$, which indeed holds as $\cI^+_{|\overline{X_{j'}^+}}$ is graded by the weights of the $\CC^*$-action, and this grading is preserved when restricting to $X_{(j, j'+1)}$.

Finally, we can identify $\widetilde{X_{(j, j'+1)}}$ with $X_{(j, j'+\frac{1}{2})}$ as they are both embedded in $ \widetilde{M}$, they are both mapped birationally onto $X_{(j, j'+1)}$ and the maps restrict over the exceptional locus to $  E:= Y^-_{(j'-1, j')} \times_{X_{j'}} \overline{X_{j'}^+} \to  \overline{X_{j'}^+}.$

c)  Recall the weighted blow-up diagram
\bean  \diagram \Spec (\oplus_{n\geq 0} \cF^n) \rrto  &&\Spec (\oplus_{n\geq 0} \pi_* \cF^n)\\
E= Y^-_{(j'-1, j')} \times_{X_{j'}} \overline{X_{j'}^+}  \uto \rrto && \overline{X_{j'}^+} \uto \enddiagram \eean
where the second line represents the restriction over the exceptional locus.
  (\cite{noi5}, Lemma 2.8).

Let $F:=Y^-_{(j'-1, j')} \times_{X_{j'}} {X_{j'}^+}$, and let $U$ be the space obtained by removing the sink from $X_{(j,j')}$. From the description of the weighted blow-up $\pi$ in the proof of part (b) as induced by the weights of the $\CC^*$--action, we can identify the restriction to $U$  of the above diagram with the weighted blow-up diagram
\bea  \diagram \mbox{ Bl}^{w}_{X_{j'}}X_{j'}^- \times_{X_{j'}} {X_{j'}^+} \rrto  && X_{j'}^- \times_{X_{j'}} {X_{j'}^+}\\
F=Y^-_{(j'-1, j')} \times_{X_{j'}} {X_{j'}^+} \uto \rrto && {X_{j'}^+} \uto \enddiagram \eea
(conform \cite{noi5}, Lemma 2.8).

Here $\mbox{ Bl}^{w}_{X_{j'}}X_{j'}^- \times_{X_{j'}} {X_{j'}^+} = \Spec (\oplus_{n\geq 0} g^+_{j' *}\cO(n)), $ where $\cO(1)$ is canonically defined on $[( X^-_{j'}-X_{j'}) /\CC^*]$, the weighted projective fibration over $X_{j'}$, and $g^+_{j'}$ is the restriction of $\ol{g^+_{j' }}$ to $F$.

As a consequence, the normal bundle $\cN_{E|X_{(j,j'+\frac{1}{2})}}$ can be written as
\bean \cN_{E|X_{(j,j'+\frac{1}{2})}}\cong  \ol{g^+_{j' }}^*\cO(-1) \otimes \cO(-mD), \eean
where $D=E-F$ and as such $\cO(-D)=f_{j'}^{- *}\cO_{\ol{X^+_{j'}}}(-1)$.  Moreover, restriction over the fibres of $\ol{g^+_{j' }}$ gives $m=1$.

After dualizing and push-forward of the above equation by $f_{j'}^{- }$ we obtain:
\bea   f_{j'  *}^{-}\cN_{E|X_{(j,j'+\frac{1}{2})}}^\vee \cong  f_{j'  *}^{-}(\ol{g^+_{j' }}^*\cO(1) \otimes f_{j'}^{- *}\cO_{\ol{X^+_{j'}}}(1)) &\cong&   f_{j'  *}^{-}\ol{g^+_{j' }}^*\cO(1) \otimes \cO_{\ol{X^+_{j'}}}(1)  \\ &\cong&  \ol{q^+_{j' }}^*p_{j'  *}^{-}\cO(1) \otimes \cO_{\ol{X^+_{j'}}}(1),\eea
 while $\Spec (\oplus_{n\geq 0} p_{j'  *}^{-}\cO(n)))\cong X^{-}_{j'}$. This leads to the property of the affine fibration $A$ stated in part (c) of the theorem.  Furthermore, the normal bundle of the zero section in $X_{j'}^-$ is $ \cN_{X_{j'}|X}^-$ (consistent with \cite{noi5}, Lemma 2.10. for the given filtration), hence the formula for the normal bundle of the zero section in $A$.

Similarly,
\bea   \ol{g^+_{j' }}_*\cN_{E|X_{(j,j'+\frac{1}{2})}}^\vee \cong  \ol{g^+_{j' }}_*(\ol{g^+_{j' }}^*\cO(1) \otimes f_{j' }^{- *}\cO_{\ol{X^+_{j'}}}(1)) &\cong& \cO(1) \otimes \ol{g^+_{j' }}_*f_{j'  }^{- *}\cO_{\ol{X^+_{j'}}}(1) \\
&\cong& \cO(1) \otimes p_{j' }^{- *}\ol{q^+_{j' }}_*\cO_{\ol{X^+_{j'}}}(1).\eea
The weighted projective fibration $\ol{X^+_{j'}}=P^w(X^+_{j'}\oplus \cO_{X_j'})$ corresponds to the affine fibration $\Spec (\oplus_{n\geq 0} \ol{q^+_{j' }}_* \cO_{X^{-}_{j'}}(n)) \cong X^+_{j'}\oplus \cO_{X_j'}$. The normal bundle of the zero section in this affine bundle is $\cN_{X_{j'}|X}^+\oplus \cO_{X_{j'}}$. Hence the properties of the fibration $B$ stated in part (c) of the theorem.

\end{proof}

\begin{corollary} \label{basisofXpyramid} With the notations introduced earlier,  we have \bea \cU_{(j'-1,j')}(X) \cong X_{(j'-1,j')} \mbox{  and } \eea
\bea \cU_{(j'-1,j'+1)}(X) \cong X_{(j'-1, j'+\frac{1}{2})}  \times_{X_{(j'-1,j'+1)}}   X_{(j'-\frac{1}{2}, j'+1)} \cong X_{(j'-\frac{1}{2}, j'+\frac{1}{2})}.    \eea
\end{corollary}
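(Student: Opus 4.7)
The first isomorphism $\cU_{(j'-1, j')}(X) \cong X_{(j'-1, j')}$ follows from Proposition \ref{evaluation_maps}. By Lemma \ref{bottom_line} combined with Theorem \ref{consecutive}, for consecutive walls $i_{j'-1} < i_{j'}$ the universal family $\cU_{(j'-1, j')}(X) \to M_{(j'-1, j')}(X) \cong [X^s_{(j'-1, j')}/\CC^*]$ has $\PP^1$-fibers, so the moduli-level operation underlying the morphism $\varphi_{(j'-1, j')}\colon \cU_{(j'-1, j')}(X) \to X_{(j'-1, j')}$ (``contract chains containing $s_1$'') acts trivially---it simply records the location of the new marked point on the single $\PP^1$. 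Its inverse is the forgetful morphism dropping $s_1$ from the 3-pointed datum, and the flow- and action-class-preserving conditions of Definition \ref{M} translate exactly into those of the 3-pointed space $X_{(j'-1, j')}$.

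For the second chain of isomorphisms, the equality $X_{(j'-1, j'+\frac{1}{2})} \times_{X_{(j'-1, j'+1)}} X_{(j'-\frac{1}{2}, j'+1)} \cong X_{(j'-\frac{1}{2}, j'+\frac{1}{2})}$ follows from the preceding theorem, which describes these spaces as weighted blow-ups. The two blow-ups $X_{(j'-1, j'+\frac{1}{2})} \to X_{(j'-1, j'+1)}$ and $X_{(j'-\frac{1}{2}, j'+1)} \to X_{(j'-1, j'+1)}$ have disjoint centers---one located near the sink boundary and the other near the source---so their fiber product is canonically the simultaneous weighted blow-up along both centers, which by the same theorem applied twice (in either order) is $X_{(j'-\frac{1}{2}, j'+\frac{1}{2})}$.

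The substantive equality $\cU_{(j'-1, j'+1)}(X) \cong X_{(j'-1, j'+\frac{1}{2})} \times_{X_{(j'-1, j'+1)}} X_{(j'-\frac{1}{2}, j'+1)}$ is constructed as follows. A point of $\cU_{(j'-1, j'+1)}(X)$ consists of a 2-pointed weighted stable map $(C, (s_0, s_\infty), \cL, e)$ together with a point $p \in C$, where $C$ is a chain of at most two $\PP^1$-components meeting at a node mapped to $X_{j'}$ (applying the reasoning of Lemma \ref{bottom_line} to the three relevant walls). Setting $s_1 := p$ and applying $\varphi_{(j'-1, j'+1)}$ yields a 3-pointed weighted stable map: if $p$ lies on the component of $C$ containing $s_\infty$, the other component contracts with its degree absorbed into the multiplicity at $s_0$, and the result lies in $X_{(j'-1, j'+\frac{1}{2})}$ (by a direct multiplicity comparison with the weight data of Definition \ref{notation_weights_for_targets}); symmetrically, $p$ near $s_0$ gives an object in $X_{(j'-\frac{1}{2}, j'+1)}$; the case $p$ at the node is common to both branches. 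Each point therefore determines a compatible pair in the fiber product. Conversely, an object of the fiber product recovers its 2-pointed underlying map by forgetting $s_1$ and blowing down to $X_{(j'-1, j'+1)}$ then to $M_{(j'-1, j'+1)}(X)$, while the position of $s_1$ determines the point $p$ on the universal curve.

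The main obstacle is verifying coherence at the node stratum, where both branches of the construction meet: the interplay of the multiplicities at $s_0, s_\infty$ dictated by Definition \ref{notation_weights_for_targets} with the degrees of the contracted components must be tracked carefully. The $\CC^*$-equivariance (for the source action, per the convention adopted after Lemma \ref{action_on_X0k}) is preserved throughout, and a local computation at the exceptional divisors of the two blow-ups comprising $X_{(j'-\frac{1}{2}, j'+\frac{1}{2})} \to X_{(j'-1, j'+1)}$---using the explicit description of the affine fibrations $A$ and $B$ given in the preceding theorem---establishes that the constructed maps are mutual inverses.
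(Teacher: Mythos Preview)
Your argument for the first isomorphism is essentially the paper's: since both $\cU_{(j'-1,j')}(X)$ and $X_{(j'-1,j')}$ parametrize only irreducible $\PP^1$-s, the contraction map $\varphi_{(j'-1,j')}$ has nothing to contract and is an isomorphism.

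For the second statement, however, your claim that the two blow-up centers in $X_{(j'-1,j'+1)}$ are \emph{disjoint} is incorrect. You appear to be invoking the disjointness used in Lemma~\ref{also2nd_action} or Corollary~\ref{DiamondX}, but that argument requires the separation $j+1<j'-1$, which fails here: with only the three consecutive walls $i_{j'-1}<i_{j'}<i_{j'+1}$, the center of $X_{(j'-1,j'+\frac{1}{2})}\to X_{(j'-1,j'+1)}$ is $\overline{X_{j'}^+}$ (the locus $m_0=i_{j'}$) and the center of $X_{(j'-\frac{1}{2},j'+1)}\to X_{(j'-1,j'+1)}$ is $\overline{X_{j'}^-}$ (the locus $m_\infty=d-i_{j'}$). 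These meet precisely along $X_{j'}$, the locus of degree-zero maps contracting to a fixed point. The paper records this correctly as a \emph{transverse} intersection along $X_{j'}$; transversality (not disjointness) is what makes the fiber product of the two weighted blow-ups coincide with the successive blow-up $X_{(j'-\frac{1}{2},j'+\frac{1}{2})}$.

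Your route to $\cU_{(j'-1,j'+1)}(X)\cong X_{(j'-\frac{1}{2},j'+\frac{1}{2})}$ also differs from the paper's. You attempt a pointwise matching by tracking where $s_1$ lies on a two-component chain and invoking a local computation at the exceptional divisors. The paper instead argues directly that the morphism $\varphi_{(j'-1,j'+1)}\colon \cU_{(j'-1,j'+1)}(X)\to X_{(j'-1,j'+1)}$ is itself the composition of the two weighted blow-ups along $\overline{X_{j'}^+}$ and $\overline{X_{j'}^-}$; the identification with the fiber product then follows immediately from the transversality just noted. This is both shorter and avoids the node-stratum bookkeeping you flag as ``the main obstacle''. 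Your sketch could be completed, but as written it defers the actual verification to an unspecified local computation, whereas the paper's argument needs only the moduli interpretation of $\varphi$ as ``contract the components not containing $s_1$'' together with the structure of the preceding theorem.
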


\begin{proof}
At the level of moduli problems of weighted stable maps, the morphism \bea  \varphi_{(j,j')}:  \cU_{(j,j')}(X) \to X_{(j,j')}\eea corresponds to contracting those components of the domain curves which do not contain the section $s_1$. However, in the case of both spaces  $\cU_{(j'-1,j')}(X)$ and $X_{(j'-1,j')}$ the curves parametrized are $\PP^1$-s, hence $\cU_{(j'-1,j')}(X) \cong X_{(j'-1,j')}$.

On the other hand the morphism \bea \varphi_{(j'-1,j'+1)}:  \cU_{(j'-1,j'+1)}(X) \to X_{(j'-1,j'+1)} \eea
is a composition of two weighted blow-ups with blow-up loci $\overline{X^+_{j'}}$ and  $\overline{X^-_{j'}}$. Blowing up $X_{(j'-1,j'+1)}$ along $\overline{X^+_{j'}}$ yields $X_{(j'-\frac{1}{2},j'+1)}$ while blowing up $X_{(j'-1,j'+1)}$ along $\overline{X^-_{j'}}$ yields $X_{(j'-1,j'+\frac{1}{2})}$. The two blow-up loci intersect transversely along $X_{j'}$. Hence
\bea \cU_{(j'-1,j'+1)}(X) \cong X_{(j'-1, j'+\frac{1}{2})}  \times_{X_{(j'-1,j'+1)}}   X_{(j'-\frac{1}{2}, j'+1)} \cong X_{(j'-\frac{1}{2}, j'+\frac{1}{2})}.    \eea

\end{proof}

We obtained a family of birational morphisms

\definecolor{ffqqqq}{rgb}{1,0,0}
\definecolor{qqqqff}{rgb}{0,0,1}
\begin{tikzpicture}[line cap=round,line join=round,>=triangle 45,x=0.55cm,y=0.55cm]
\clip(-2.62,-7.88) rectangle (24.24,5.53); \label{grand_pyramid}
\draw [color=qqqqff](7.82,3.82) node[anchor=north west] {$X_{(0,k-\frac{1}{2})}$};
\draw [color=qqqqff](12.14,3.87) node[anchor=north west] {$X_{(\frac{1}{2}, k)}$};
\draw (5.88,1.8) node[anchor=north west] {$X_{(0, k-1)}$};
\draw [color=ffqqqq](10.29,1.84) node[anchor=north west] {$X_{(\frac{1}{2}, k-\frac{1}{2})}$};
\draw (14.16,1.8) node[anchor=north west] {$X_{(1,k)}$};
\draw [color=qqqqff](3.72,-0.14) node[anchor=north west] {$X_{(0, k-\frac{3}{2})}$};
\draw [color=qqqqff](7.95,-0.18) node[anchor=north west] {$X_{(\frac{1}{2}, k-1)}$};
\draw (10.16,5.76) node[anchor=north west] {$X_{(0,k)}$};
\draw [color=qqqqff](11.96,-0.18) node[anchor=north west] {$X_{(1,k-\frac{1}{2})}$};
\draw [color=qqqqff](16.05,-0.09) node[anchor=north west] {$X_{(\frac{3}{2}, k)}$};
\draw (1.83,-2.25) node[anchor=north west] {$X_{(0,k-2)}$};
\draw [color=ffqqqq](6.02,-2.16) node[anchor=north west] {$X_{(\frac{1}{2},k-\frac{3}{2})}$};
\draw (10.07,-2.25) node[anchor=north west] {$X_{(1,k-1)}$};
\draw [color=ffqqqq](13.98,-2.21) node[anchor=north west] {$X_{(\frac{3}{2},k-\frac{1}{2})}$};
\draw (18.48,-2.21) node[anchor=north west] {$X_{(2,k)}$};
\draw [color=qqqqff](-0.19,-4.19) node[anchor=north west] {$X_{(0, k-\frac{5}{2})}$};
\draw [color=qqqqff](3.9,-4.19) node[anchor=north west] {$X_{(\frac{1}{2}, k-2)}$};
\draw [color=qqqqff](7.91,-4.19) node[anchor=north west] {$X_{(1, k-\frac{3}{2})}$};
\draw [color=qqqqff](11.96,-4.19) node[anchor=north west] {$X_{(\frac{3}{2}, k-1)}$};
\draw [color=qqqqff](15.92,-4.19) node[anchor=north west] {$X_{(2, k-\frac{1}{2})}$};
\draw [color=qqqqff](20.15,-4.19) node[anchor=north west] {$X_{(\frac{5}{2},k)}$};
\draw (-2.08,-6.26) node[anchor=north west] {$X_{(0,k-3)}$};
\draw [color=ffqqqq](1.83,-6.17) node[anchor=north west] {$X_{(\frac{1}{2}, k-\frac{5}{2})}$};
\draw (5.88,-6.26) node[anchor=north west] {$X_{(1,k-2)}$};
\draw [color=ffqqqq](9.84,-6.17) node[anchor=north west] {$X_{(\frac{3}{2}, k-\frac{3}{2})}$};
\draw (13.94,-6.26) node[anchor=north west] {$X_{(2,k-1)}$};
\draw [color=ffqqqq](17.94,-6.17) node[anchor=north west] {$X_{(\frac{5}{2}, k-\frac{1}{2})}$};
\draw (22.08,-6.26) node[anchor=north west] {$X_{(3,k)}$};
\draw [->] (8.94,3.36) -- (10.02,4.35);
\draw [->] (12.45,3.5) -- (11.55,4.44);
\draw [->] (7.95,2.37) -- (6.83,1.29);
\draw [->] (4.71,-0.6) -- (5.84,0.35);
\draw [->] (3.77,-1.63) -- (2.69,-2.76);
\draw [->] (0.66,-4.6) -- (1.7,-3.66);
\draw [->] (-0.28,-5.73) -- (-1.32,-6.72);
\draw [->] (11.03,1.55) -- (12.15,2.49);
\draw [->] (13.58,2.37) -- (14.57,1.38);
\draw [->] (16.28,-0.6) -- (15.42,0.3);
\draw [->] (17.4,-1.68) -- (18.44,-2.71);
\draw [->] (20.15,-4.51) -- (19.38,-3.75);
\draw [->] (21.5,-5.91) -- (22.44,-6.85);
\draw [->] (10.2,1.47) -- (9.26,2.33);
\draw [->] (11.31,0.36) -- (12.23,-0.6);
\draw [->] (10.16,0.48) -- (9.03,-0.55);
\draw [->] (12.21,-1.65) -- (11.1,-2.71);
\draw [->] (14.03,-3.61) -- (12.95,-4.65);
\draw [->] (16.05,-5.61) -- (14.61,-6.73);
\draw [->] (13.22,-0.69) -- (14.52,0.3);
\draw [->] (14.97,-2.58) -- (16.28,-1.59);
\draw [->] (17.01,-4.57) -- (18.32,-3.58);
\draw [->] (19.16,-6.72) -- (20.37,-5.73);
\draw [->] (7.95,-0.6) -- (7.01,0.44);
\draw [->] (5.97,-2.71) -- (5.03,-1.72);
\draw [->] (3.86,-4.69) -- (2.96,-3.75);
\draw [->] (1.79,-6.67) -- (0.93,-5.82);
\draw [->] (8.96,-1.71) -- (9.9,-2.61);
\draw [->] (6.87,-3.72) -- (7.83,-4.68);
\draw [->] (5.12,-5.86) -- (6.02,-6.76);
\draw [->] (6.74,-2.58) -- (7.86,-1.59);
\draw [->] (8.85,-4.65) -- (9.98,-3.7);
\draw [->] (10.83,-6.54) -- (11.87,-5.59);
\draw [->] (14.07,-2.58) -- (13.22,-1.72);
\draw [->] (11.96,-4.6) -- (11.01,-3.66);
\draw [->] (9.8,-6.58) -- (8.85,-5.68);
\draw [->] (15.09,-3.75) -- (16.01,-4.69);
\draw [->] (13.08,-5.82) -- (14.03,-6.81);
\draw [->] (18.08,-6.67) -- (17,-5.64);
\draw [->] (7.91,-5.59) -- (6.83,-6.76);
\draw [->] (5.93,-3.61) -- (4.76,-4.74);
\draw [->] (2.51,-6.58) -- (3.81,-5.37);
\end{tikzpicture}

which we would like to compare to that of Corollary    \ref{1stpyramid}.

\begin{notation} \label{notation_morphisms}

For every $j, j', l, l'$ such that  $0\leq j \leq l < l'\leq j' \leq k$ we will employ the following notations for the natural morphisms between moduli spaces:
 \begin{tabular}{cc} $m^{j, j'}_{l, l'}: M_{(j, j')}(X) \to M_{(l, l')}(X),$  & $u^{j, j'}_{l, l'} : \cU_{(j, j')}(X) \to \cU_{(l, l')}(X),$ \\
$p_{(j, j')}: \cU_{(j, j')}(X) \to M_{(j, j')}(X), $ &
$\varphi_{(j, j')}: \cU_{(j, j')}(X) \to X_{(j, j')}$.
 \end{tabular}
For suitable choices of  $j, j', l, l'$, we will denote $v^{j, j'}_{l, l'} : X_{(j, j')}(X) \to X_{(l, l')}$.
For simplicity of notations, in cases when there is no danger of confusion we may omit the indices of the morphisms, preserving only the those of the domain and target spaces.
\end{notation}

\begin{theorem} For a pair of positive integers $(j, j')$ such that $0<j<j'<k$, consider the triangle formed by all the spaces $X_{(l,l')}$ above such that
\bea   0\leq j \leq l < l'\leq j' \leq k. \eea
We denote by   $I(j,j')$ the inverse family generated by all the morphisms between these spaces found in the triangle above, their compositions and fibred products.

There are natural isomorphisms as follows:
\begin{itemize}
\item[a)] $\cU_{(j, j'+1)}(X) \cong \cU_{(j, j')}(X)\times_{X_{(j'-1,j')}}X_{(j'-\frac{1}{2}, j'+\frac{1}{2})}$.
\item[b)] The universal family $\cU_{(j, j')}(X)$ over $M_{(j, j')}(X)$ is isomorphic with the inverse limit of the inverse family  $I(j,j')$.
\end{itemize}
\end{theorem}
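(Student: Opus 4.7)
The plan is to establish part~(a) by constructing an explicit isomorphism via universal properties, and then deduce part~(b) from (a) by induction on $j'-j$ combined with general properties of inverse limits.

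For part~(a), I will build a canonical morphism $\Psi\colon \cU_{(j, j'+1)}(X) \to \cU_{(j, j')}(X)\times_{X_{(j'-1,j')}}X_{(j'-\frac{1}{2}, j'+\frac{1}{2})}$ using the functoriality of universal families. The morphisms $m^{j,j'+1}_{j,j'}$ and $m^{j,j'+1}_{j'-1,j'+1}$ between moduli spaces lift to morphisms $u^{j,j'+1}_{j,j'}\colon \cU_{(j,j'+1)}(X) \to \cU_{(j,j')}(X)$ and $u^{j,j'+1}_{j'-1,j'+1}\colon \cU_{(j,j'+1)}(X) \to \cU_{(j'-1,j'+1)}(X) \cong X_{(j'-\frac{1}{2}, j'+\frac{1}{2})}$, where the last identification is Corollary~\ref{basisofXpyramid}. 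The two further compositions down to $\cU_{(j'-1,j')}(X) \cong X_{(j'-1,j')}$ agree by functoriality, so $\Psi$ is well defined.

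To invert $\Psi$, I will work at the level of $T$-points. A $T$-point of the target consists of two pointed families classified by $\cU_{(j,j')}(X)$ and $\cU_{(j'-1,j'+1)}(X)$ whose images in $X_{(j'-1,j')}$ coincide. Iterating Theorem~\ref{four_consecutive} yields a Cartesian decomposition
\[M_{(j,j'+1)}(X) \cong M_{(j,j')}(X) \times_{M_{(j'-1,j')}(X)} M_{(j'-1,j'+1)}(X),\]
from which the underlying 2-pointed family $C/T$ classified by $M_{(j,j'+1)}(X)$ is reconstructed by gluing $C'/T$ and $C''/T$ along their common contraction in $M_{(j'-1,j')}(X)$, in the manner of the proof of Theorem~\ref{four_consecutive}. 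To place the extra section $s_1\colon T \to C$, a case analysis based on the positions of $s'_1(t)$ and $s''_1(t)$ in each geometric fiber is required: if $s'_1(t)$ lies on a component of $C'_t$ that is not collapsed in passing to the family over $M_{(j'-1,j')}(X)$, then $s_1(t)$ is the unique lift of $s'_1(t)$ to $C_t$; otherwise $s''_1(t)$ lies away from the collapsed section $s''_0$ and provides the lift. The compatibility in $X_{(j'-1,j')}$ guarantees that these two determinations agree on the middle portion common to $C'$ and $C''$, and rules out the inconsistent configuration where $s_1$ would need to lie simultaneously above $i_{j'}$ and below $i_{j'-1}$.

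For part~(b), I will proceed by induction on $j'-j$. The base case $j'=j+1$ is immediate from Corollary~\ref{basisofXpyramid}, since the inverse system $I(j,j+1)$ is trivial. For the inductive step, part~(a) gives
\[\cU_{(j,j'+1)}(X) \cong \cU_{(j,j')}(X) \times_{X_{(j'-1,j')}} \cU_{(j'-1,j'+1)}(X),\]
and by the inductive hypothesis each factor is the inverse limit of its inverse family $I(j,j')$ or $I(j'-1, j'+1)$. A fiber product of inverse limits over a common object is itself the inverse limit of the union of the two diagrams glued along that common object; this glued system coincides with $I(j, j'+1)$, since every morphism in the latter either arises from one of the two sub-families or is the fiber product of such morphisms over intermediate spaces. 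The main obstacle lies in part~(a): ensuring that the reconstruction of $s_1$ is well defined in the mixed situation where $s'_1$ or $s''_1$ may lie on collapsed sections or at nodes, which requires carefully tracking the compatibility condition through the contraction maps and invoking the finer description of the exceptional loci worked out in the Cartesian decomposition.
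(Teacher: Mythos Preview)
Your overall plan---induction for part~(b) based on part~(a), with the base case from Corollary~\ref{basisofXpyramid}---matches the paper's proof. The difference lies in how part~(a) is established.

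The paper avoids your case analysis on the placement of $s_1$ by first proving the isomorphism in a pulled-back form over the common base $M_{(j,j'+1)}(X)$:
\[
\cU_{(j,j'+1)}(X)\;\cong\; m^{j,j'+1\,*}_{j,j'}\cU_{(j,j')}(X)\;\times_{\,m^{j,j'+1\,*}_{j'-1,j'}\cU_{(j'-1,j')}(X)}\; m^{j,j'+1\,*}_{j'-1,j'+1}\cU_{(j'-1,j'+1)}(X).
\]
Over this base the gluing is purely a fiberwise curve-gluing (exactly the argument of Theorem~\ref{four_consecutive}), and no separate bookkeeping for $s_1$ is required: the extra marked point is just the tautological fiber coordinate, automatically carried along. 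The desired fiber product over $X_{(j'-1,j')}$ then follows by two formal Cartesian manipulations, using that $M_{(j,j'+1)}(X)\cong M_{(j,j')}(X)\times_{M_{(j'-1,j')}(X)}M_{(j'-1,j'+1)}(X)$ and the identifications of Corollary~\ref{basisofXpyramid}. This sidesteps precisely the ``main obstacle'' you flag: the mixed situations where $s_1'$ or $s_1''$ land on collapsed loci never need to be analyzed directly.

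For part~(b), the paper also includes an explicit verification---via comparison of the weight triples $\cA^a(i_j,i_{j'})$ and $\cA(i_l,i_{l'},i_{l'}-i_l)$---that $\cU_{(j,j')}(X)$ admits compatible morphisms to \emph{all} spaces $X_{(l,l')}$ in $I(j,j')$, not only those in $I(j,j'-1)\cup I(j'-2,j')$. Your categorical statement about fiber products of inverse limits is correct, but you should note that the glued subsystem is only cofinal in $I(j,j'+1)$ because the ``missing'' spaces $X_{(l,j'+1)}$, $X_{(l,j'+\frac{1}{2})}$ with $l<j'-1$ are themselves iterated fiber products of spaces already present, by the Cartesian squares of Corollary~\ref{DiamondX}. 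The paper's argument, which just checks the universal property directly after exhibiting the full cone of maps, is slightly more direct here.
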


\begin{proof}


We first claim that $\cU_{(j,j'+1)}(X)$ is naturally isomorphic to
 \bean \label{isom1}   m^{j, j'+1 *}_{j, j'} \cU_{(j,j')}(X) \times_{m^{j, j'+1 *}_{j'-1, j'}\cU_{(j'-1,j')}(X)} m^{j, j'+1 *}_{j'-1, j'+1} \cU_{(j'-1,j'+1)}(X). \eean
Indeed, within each fibre over a point $b$ in $M_{(j,j'+1)}(X)$, both maps \bea \cU_{(j,j'+1)}(X) &\to & m^{j, j'+1 *}_{j, j'} \cU_{(j,j')}(X) \mbox{ and }  \\ m^{j, j'+1 *}_{j'-1, j'+1} \cU_{(j'-1,j'+1)}(X) &\to & m^{j, j'+1 *}_{j'-1, j'} \cU_{(j'-1,j')}(X)\eea  contract exactly those irreducible components $\cC_\infty$ which contain $s_\infty(b)$ and such that
 $\cC_\infty \setminus s_\infty(b)$ is mapped into $X^+_{j'}$.
The maps \bea  \cU_{(j,j'+1)}(X) & \to &  m^{j, j'+1 *}_{j'-1, j'+1}\cU_{(j'-1, j'+1)}(X) \mbox{ and }  \\ m^{j, j'+1 *}_{j, j'}\cU_{(j,j')}(X) & \to & m^{j, j'+1 *}_{j'-1, j'} \cU_{(j'-1,j')}(X)\eea contract exactly those (unions of) components $\cC'$ which do not contain $s_\infty(b)$ and whose image in $X$ is sent by the moment map to $[0, i_{j'-1}]$.
Thus $\cU_{(j,j'+1)}(X)$ can be obtained by gluing the complements of the contraction loci of the spaces $m^{j, j'+1 *}_{j, j'} \cU_{(j,j')}(X)$ and $m^{j, j'+1 *}_{j'-1, j'+1} \cU_{(j'-1,j'+1)}(X)$ along their image in  $ {m^{j, j'+1 *}_{j'-1, j'} \cU_{(j'-1,j')}(X)}$.

Furthermore, by definition, the pullback $m^{j, j'+1 *}_{j'-1, j'+1} \cU_{(j'-1,j'+1)}(X) $ is naturally isomorphic to
\bean \label{isom2}
  m^{j, j'+1 *}_{j'-1, j'}\cU_{(j'-1,j')}(X) \times_{m^{j'-1, j'+1 *}_{j'-1, j'}\cU_{(j'-1,j')}(X)} \cU_{(j'-1,j'+1)}(X), \eean
and on the other hand
\bean \label{isom3}
 m^{j, j'+1 *}_{j, j'} \cU_{(j,j')}(X) \cong  \cU_{(j,j')}(X)  \times_{ \cU_{(j'-1,j')}(X) } m^{j'-1, j'+1 *}_{j'-1, j'}\cU_{(j'-1,j')}(X) \eean
follows directly from the isomorphism  \bea M_{(j,j'+1)}(X) \cong  M_{(j,j')}(X) \times_{M_{(j'-1,j')}(X)}  M_{(j'-1,j'+1)}(X) \eea  (see Corollary \ref{1stpyramid}).
Putting together  (\ref{isom1}), (\ref{isom2}), and (\ref{isom3}) we obtain a natural isomorphism
\bean \label{isom4} \cU_{(j,j'+1)}(X) \cong  \cU_{(j,j')}(X) \times_{\cU_{(j'-1,j')}(X)}  \cU_{(j'-1,j'+1)}(X). \eean

By Corollary \ref{basisofXpyramid}, we have $\cU_{(j'-1,j')}(X) \cong X_{(j'-1,j')}$ and
\bea \cU_{(j'-1,j'+1)}(X) \cong X_{(j'-1, j'+\frac{1}{2})} \times_{X_{(j'-1,j'+1)}}   X_{(j'-\frac{1}{2}, j'+1)} \cong X_{(j'-\frac{1}{2}, j'+\frac{1}{2})}.    \eea

Hence equation (\ref{isom4}) becomes: \bea \cU_{(j,j'+1)}(X) \cong  \cU_{(j,j')}(X) \times_{X_{(j'-1,j')}}   X_{(j'-\frac{1}{2}, j'+\frac{1}{2})},  \eea
which proves part (a).

 To prove that the universal family $\cU_{(j, j')}(X)$ over $M_{(j, j')}(X)$ is isomorphic with the inverse limit of the inverse family  $I(j,j')$, we first show that there exist  natural maps  $\cU_{(j, j')}(X) \to X_{(l, l')}$  to all the spaces in the family $I(j, j')$. Indeed, the weight choices for $\cU_{(j, j')}(X)$
\bea  a_0= 1-ai_j \mbox{, } a_\infty = 1-a(d-i_{j'})\mbox{, } 0\leq a_1 <1-da \eea
are larger than the respective weights for $X_{(l-\frac{1}{2}, l'+\frac{1}{2})}  \hookrightarrow  \overline{M}_{0, \mathcal{A}(i_{l}-\frac{1}{2},i_{l'}+\frac{1}{2}, i_{l'}-i_{l}+\frac{1}{4})}{(\mathbb P^n, d, a)}$  whenever $0\leq j < l \geq l'< j' \leq k$:
\bea  \begin{array}{lll}   a_0:= 1-(i_{l'}-\frac{1}{2})a\mbox{, } a_\infty = 1-(d-i_{l}-\frac{1}{2})a \mbox{, } a_1=(i_{l'}-i_{l}-\frac{3}{4})a,\end{array}\eea
 which confirms the existence of a morphism $\cU_{(j, j'+1)}(X) \to X_{(l+\frac{1}{2}, l'-\frac{1}{2})}$ for the given ranges of $l, l'$.  Furthermore, each space in the family $I(j, j')$ is the targer of a map in the inverse system, with domain of the form $X_{(l+\frac{1}{2}, l'-\frac{1}{2})}$ for suitable $l, l'$ as above.

To finalize part (b), we apply induction on $j'>j$: In the case when $j'=j+1$, we know
$\cU_{(j, j+1)}(X) \cong X_{(j, j+1)}$. We now assume that $\cU_{(j, j')}(X)$ does
 satisfy the universal property of the inverse limit of the inverse family  $I(j,j')$, and prove the same for  $\cU_{(j, j'+1)}(X)$
 and the inverse family  $I(j,j'+1)$. Indeed, any scheme having compatible morphisms into all the spaces $X_{(l,l')}$ of $I(j,j'+1)$ will, in particular, have a natural morphism to $\cU_{(j, j')}(X)$ (by the induction hypothesis), and to $X_{(j'-\frac{1}{2}, j'+\frac{1}{2})}$, and hence, by part (a), to $\cU_{(j, j'+1)}(X)$. The naturality insures that $\cU_{(j, j'+1)}(X)$ is indeed the inverse limit of the family.


\end{proof}

\section{Computations in equivariant $K$--theory}

Let $\beta$ be the class of the map $\PP^1 \to X$ given by $t\to t\cdot x$ for $x$ generic in $X$. In the previous section we have described in detail the fixed locus $ M_{(0,k)}(X)=M_{\omega, 2}$, where $\omega =((u_0, u_\infty), \beta)$, and $u_0, u_\infty$ are the source and sink of the graph associated to the action. We will now discuss  the contribution of the fixed locus $M_{\omega, 2} = M_{(0,k)}(X)$ to the computation of the virtual fundamental class of $\overline{M}_{0,2}(X,\beta)$. This contribution consists in two components: a fixed part $[M_{\omega, 2}]^{vir}$ and a moving part contribution $e^{\CC^*}(N_{M_{\omega, 2}}^{vir})$. The fixed part of the virtual fundamental class can be recovered as a special case of Theorem 6.3 in \cite{andrei}:

Consider the Cartesian diagram

\bea \diagram  M_{(0,k)}(X) \rto \dto & \prod_{j=0}^{k-2} M_{(j, j+2)}(X) \dto^{p} \\
    \Delta:\prod_{j=0}^{k-1} M_{(j, j+1)}(X) \rto & \prod_{j=0}^{k-2} M_{(j, j+1)}(X)\times M_{(j+1, j+2)}(X) \enddiagram  \eea

 Here $\Delta$ is the diagonal map and $p= (m^{j,j+2}_{j,j+1}, m^{j,j+2}_{j+1,j+2})_j $. In this context, \bea [M_{(0,k)}(X)]^{vir}= p^*([\Delta]).\eea This is indeed a cycle in $M_{(0,k)}(X)$ as $M_{(0,k)}(X)= p^{-1}(\mbox{ Im }\Delta)$.

 For the remainder of this section we will focus on the moving part.

 The virtual normal bundle  $N_{M_{(0,k)}(X)}^{vir}$ satisfies \bea [N_{M_{(0,k)}(X)}^{vir}]= \chi_{\neq 0}(R^{\bullet} p_{(0,k) *}( ev^*(\cT_X))), \eea
 in the equivariant $K$-theory of $M_{(0,k)}(X)$. The equivariant $K$-groups come with a grading given by the $\CC^*$-action, and for any complex $N^{\bullet}$, the term $\chi_{\neq 0}(N^{\bullet})$ represents the positive degree part of  $\chi(N^{\bullet})$.
  The formula above follows directly from \cite{ber}, \cite{graber_pand} together with \cite{andrei}. In general one should also consider the contribution from the relative cotangent complex of $m: M_{0,2}(X, \beta) \to \mathfrak{M}_{0,2}$, where $ \mathfrak{M}_{0,2}$ is the Artin stack of rational curves. However  the relative cotangent complex above does not contribute to the moving part for $M_{(0,k)}(X)$ because, given any map parametrized by a point in  $M_{(0,k)}(X)$, the deformations of the domain curve that keep $s_0$ and $s_\infty$ fixed are $\CC^*$-equivariant.

Our strategy for computing $\chi_{\neq 0}(R^{\bullet} p_{(0,k) *}( ev^*(\cT_X)))$ will be the following: we will write the class of $ev^*(\cT_X)$ in the equivariant K-theory of $ \cU_{(0,k)}(X)$ as a sum of classes supported on the fixed point locus of $ \cU_{(0,k)}(X)$ together with classes which are mainly pulled-back from $ M_{(0,k)}(X)$ or some substrata of $ M_{(0,k)}(X)$. These last classes will not contribute to  $\chi_{\neq 0}(R^{\bullet} p_{(0,k) *}( ev^*(\cT_X)))$   as their push-forward via $p_{(0,k)}$ will have trivial $\CC^*$-action.

 \begin{proposition}\label{induction}

 Recall that $\varphi_{(l,l')}: \cU_{(l,l')} \to X_{(l,l')}$ is the natural "evaluation map" and for $j\leq l<l'\leq j'$, we denoted by $u^{j,j'}_{l,l'}: \cU_{(j,j')}(X) \to \cU_{(l,l')}(X)$ the natural maps between the universal families over $m^{j,j'}_{l,l'}: M_{(j,j')}(X) \to M_{(l,l')}(X)$.

 Let $g_{ll'}^{jj'}:= \varphi_{(l,l')} \circ u_{j,j'}^{l,l'}$. For any $(j+1) <l \leq l' \leq j'$, the following equation holds in the equivariant $K$-groups of $ \cU_{(j,j')}(X)$: $$[g_{(j+1)l}^{jj'*}(\cT_{X_{(j+1,l)}})]-[g_{jl}^{jj'*}(\cT_{X_{(j,l)}})]=[g_{(j+1)l'}^{jj'*}(\cT_{X_{(j+1,l')}})]-[g_{jl'}^{jj'*}(\cT_{X_{(j,l')}})],$$ where $\cT_{X_{(j,l)}}$ is the tangent bundle to $X_{(j,l)}$.
 \end{proposition}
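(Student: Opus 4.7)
The proof reduces, by telescoping, to showing
\[
g_{(j+1)(l+1)}^{jj'*}\cT_{X_{(j+1,l+1)}} - g_{(j+1)l}^{jj'*}\cT_{X_{(j+1,l)}} = g_{j(l+1)}^{jj'*}\cT_{X_{(j,l+1)}} - g_{jl}^{jj'*}\cT_{X_{(j,l)}}
\]
for each fixed $l$ with $j+1 < l < j'$. Fix $m \in \{j, j+1\}$ and apply the weighted blow-up theorem immediately preceding Corollary~\ref{basisofXpyramid}: $X_{(m, l+\frac{1}{2})}$ is simultaneously a weighted blow-up $\pi_m^-$ of $X_{(m,l)}$ along $Y^-_{(l-1,l)}$, and a weighted blow-up $\pi_m^+$ of $X_{(m,l+1)}$ along $\overline{X^+_l}$. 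The engine of the argument is part~(c) of that theorem: the normal bundles of these centers and the $\CC^*$-weights acting on them are assembled from $\cN_{X_l|X}^\pm$ and the weights of the $\CC^*$-action on $X$, and are therefore \emph{independent} of $m$.

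\textbf{Applying the weighted blow-up formula.} Invoking the standard formula in equivariant $K$-theory for the tangent sheaf of a weighted blow-up (cf.\ \cite{noi5}, Lemma~2.10 and the surrounding discussion), on $X_{(m, l+\frac{1}{2})}$ one obtains an identity
\[
[\pi_m^{+*}\cT_{X_{(m,l+1)}}] - [\pi_m^{-*}\cT_{X_{(m,l)}}] = \Phi_l^{(m)},
\]
where $\Phi_l^{(m)}$ is an explicit class supported on the exceptional divisor, built from the blow-up centers, their normal bundles, and the weights. By the invariance noted above, the data defining $\Phi_l^{(m)}$ are the same for $m=j$ and $m=j+1$.

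\textbf{Pullback to $\cU_{(j,j')}(X)$.} By the inverse-system construction of the universal family (final theorem of Section~3), there are morphisms $a_m : \cU_{(j,j')}(X) \to X_{(m, l+\frac{1}{2})}$ factoring $g_{ml}^{jj'} = \pi_m^- \circ a_m$ and $g_{m(l+1)}^{jj'} = \pi_m^+ \circ a_m$. Pulling the displayed identity back through $a_m$ yields
\[
g_{m(l+1)}^{jj'*}\cT_{X_{(m,l+1)}} - g_{ml}^{jj'*}\cT_{X_{(m,l)}} = a_m^*\Phi_l^{(m)}.
\]
The hypothesis $l > j+1$ forces the exceptional divisors $E_m \subset X_{(m, l+\frac{1}{2})}$ to lie in the open substack on which the birational map $X_{(m, l+\frac{1}{2})} \dashrightarrow X$ of Remark~\ref{identification} is an isomorphism onto a neighborhood of $X_l$. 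Hence both $a_j^*\Phi_l^{(j)}$ and $a_{j+1}^*\Phi_l^{(j+1)}$ are identified with the pullback, to $\cU_{(j,j')}(X)$, of a single universal class built from $\cN_{X_l|X}^\pm$ and the $\CC^*$-weights, via the evaluation map of the universal family. The two sides therefore agree, which proves the required identity.

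\textbf{Main obstacle.} The delicate step is the last one: verifying that $a_j^*\Phi_l^{(j)} = a_{j+1}^*\Phi_l^{(j+1)}$. This requires careful bookkeeping of the Koszul-type $K$-theory classes entering the weighted blow-up formula, together with the observation that, in a neighborhood of the preimages of $Y^-_{(l-1,l)}$ and $\overline{X^+_l}$, the maps $a_j$ and $a_{j+1}$ both coincide with the universal evaluation of $\cU_{(j,j')}(X)$ into $X$. Granted this, telescoping in $l$ and $l'$ yields the full statement of the proposition.
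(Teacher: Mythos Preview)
Your proof is correct, but it takes a noticeably different route from the paper's.

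The paper argues by a direct locality observation, without telescoping and without invoking the weighted blow-up formula. The point is that the birational modification from $X_{(j,l)}$ to $X_{(j+1,l)}$ takes place entirely in a neighborhood of $\overline{X^-_{j+1}}$ (the closure of the stratum flowing into $X_{j+1}$), so the two pullbacks $g_{(j+1)l}^{jj'*}\cT_{X_{(j+1,l)}}$ and $g_{jl}^{jj'*}\cT_{X_{(j,l)}}$ agree as sheaves outside the preimage of that locus. On the other hand, since $l>j+1$, the birational modification from $X_{(j,l)}$ to $X_{(j,l')}$ (and from $X_{(j+1,l)}$ to $X_{(j+1,l')}$) takes place near the sink, away from $\overline{X^-_{j+1}}$; hence on a neighborhood $U$ of the preimage of $\overline{X^-_{j+1}}$ one has $g_{jl}^{jj'*}\cT_{X_{(j,l)}}|_U = g_{jl'}^{jj'*}\cT_{X_{(j,l')}}|_U$ and likewise with $j$ replaced by $j+1$. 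Covering $\cU_{(j,j')}(X)$ by $U$ and the complement of the support, one sees that the difference of the two sides of the asserted identity vanishes.

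Your approach instead factors through $X_{(m,l+\frac{1}{2})}$ and the explicit weighted blow-up $K$-theory formula, producing the difference as a concrete class $\Phi_l^{(m)}$ supported on the exceptional divisor. What you call the ``main obstacle''---the equality $a_j^*\Phi_l^{(j)}=a_{j+1}^*\Phi_l^{(j+1)}$---is exactly the locality observation the paper uses as its \emph{entire} argument: the exceptional loci lie over $X_l$, and since $l>j+1$ the passage from $m=j$ to $m=j+1$ is an isomorphism there. So your route is valid but more elaborate; its payoff is that the explicit form of $\Phi_l^{(m)}$ is precisely what the paper goes on to compute in the next two propositions anyway, so you have effectively front-loaded that computation.
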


 \begin{proof}

 We will identify the component $X_{j+1}$ of the fixed locus in $X$ with its preimage in $X_{(j,l)}$. Using the stratification given by the $\CC^*$-action on $X_{j,l}$ we denoted $$\overline{X^-_{j+1}} = \ol{\{ x \in X_{(j,l)} \mbox{ such that } \lim_{t \to \infty} tx \in X_{j+1}\}}.$$ There is an open set of $X_{j,l}$ containing $\overline{X}_{j+1}^-$ on which the natural birational map between $X_{(j,l)} $ and $X_{(j,l')}$ is an isomorphism. On the other hand $g_{(j+1)l}^{jj'}*(\cT_{X_{(j+1,l)}})$ and $g_{jl}^{jj'}(\cT_{X_{(j,l)}})$ are isomorphic outside $g_{jl}^{j(j'-1)}(\overline{X^-_{j+1}})$, and also in a neighborhood $U$ around  $g_{jl}^{j(j'-1)}(\overline{X^-_{j+1}})$,  $$g_{(j+1)l}^{jj'*}(\cT_{X_{(j+1,l)}})_{|U}=g_{(j+1)l'}^{jj'*}(\cT_{X_{(j+1,l')}})_{|U}$$ and $$g_{jl}^{jj'*}(\cT_{X_{(j,l)}})_{|U}= g_{jl'}^{jj'*}(\cT_{X_{(j,l')}})_{|U}.$$

 \end{proof}

The following proposition will be our main tool in comparing classes of tangent bundles and their pull-backs:

 \begin{proposition}\label{Chern}

 Let $Z$ denote a smooth Deligne-Mumford stack and let $Y$ be a smooth substack. For a suitable increasing filtration of the ideal sheaf of $Y$ in $X$, we denote by
 $f: \widetilde{Z} \to Z$ the weighted blow-up of $Z$ along $Y$ (as defined in \cite{noi5}, section 2.1). Let $N_{Y|Z}= \oplus_w Q_w$ be the decomposition of the normal bundle after the weights associated to the weighted blow-up, where $w$ denotes the weight of $Q_w$.

 We will denote  by $g: E \to Y$ the restriction of $f$ to the exceptional divisor and by $i: E \hookrightarrow \widetilde{Z}$ the embedding of the exceptional divisor into $\widetilde{Z}$.

 The following equation holds in the $K$-theory of $\widetilde{Z}$ with rational coefficients:
  \bea [\cT_{\widetilde{Z}}]-[f^*(\cT_Z)]= i_*([\cO_E(E)] -\sum_w\sum_{0 \leq j < w}[g^*(Q_w)\otimes \cO_E(-jE)]). \eea  Moreover, if $\CC^*$ acts on $Z$ and the embedding of $Y$ in $Z$ is equivariant, then weighted blow-up map is equivariant then the same formula holds in the equivariant $K$-theory with rational coefficients.

 \end{proposition}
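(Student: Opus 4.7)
The strategy is to verify the identity by computing the restriction of both sides to the exceptional divisor $E$ and then upgrading the result to the desired equation in $K(\widetilde{Z})$. Since $f:\widetilde{Z}\to Z$ is an isomorphism away from $E$, the class $[\cT_{\widetilde{Z}}]-[f^*\cT_Z]$ vanishes on $\widetilde{Z}\setminus E$ and hence, by the localization sequence $K(E)\xrightarrow{i_*} K(\widetilde{Z})\to K(\widetilde{Z}\setminus E)\to 0$, lies in the image of $i_*$. The task is to identify it with $i_*\alpha$, where $\alpha:=[\cO_E(E)]-\sum_w\sum_{0\leq j<w}[g^*Q_w\otimes \cO_E(-jE)]$.

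First, I will compute $i^*([\cT_{\widetilde{Z}}]-[f^*\cT_Z])$ in $K(E)$ from three short exact sequences on $E$: the normal bundle sequence $0\to \cT_E\to \cT_{\widetilde{Z}}|_E\to \cO_E(E)\to 0$; the relative tangent sequence $0\to \cT_{E/Y}\to \cT_E\to g^*\cT_Y\to 0$; and the pullback via $g$ of the normal sequence for $Y\subset Z$, namely $0\to g^*\cT_Y\to f^*\cT_Z|_E\to g^*N_{Y|Z}\to 0$. The alternating sum yields $[\cT_{\widetilde{Z}}|_E]-[f^*\cT_Z|_E]=[\cT_{E/Y}]+[\cO_E(E)]-[g^*N_{Y|Z}]$.

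Second, I will substitute the weighted Euler sequence for the weighted projective fibration $g:E=P^w(N_{Y|Z})\to Y$ from \cite{noi5}, which in $K$-theory reads $[\cT_{E/Y}]=\sum_w [g^*Q_w\otimes \cO_E(-wE)]-[\cO_E]$, under the convention $\cO_{P^w(N)}(1)=\cO_{\widetilde{Z}}(-E)|_E$ that is consistent with the preceding theorem's identification $\cN_{E|\widetilde{Z}}\cong\cO_E(-1)$. This produces a closed formula for $i^*([\cT_{\widetilde{Z}}]-[f^*\cT_Z])$ in $K(E)$, and then I will verify that $i^*i_*\alpha$ matches it: the Koszul resolution $0\to \cO_{\widetilde{Z}}(-E)\to \cO_{\widetilde{Z}}\to i_*\cO_E\to 0$ together with the projection formula gives $i^*i_*\beta=\beta-\beta\otimes\cO_E(-E)$ for every $\beta\in K(E)$, and the telescoping identity $\sum_{0\leq j<w}([\cO_E(-jE)]-[\cO_E(-(j+1)E)])=[\cO_E]-[\cO_E(-wE)]$ reduces $i^*i_*\alpha$ to exactly the expression derived from the first two steps.

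Third, to pass from equality after $i^*$ to equality in $K(\widetilde{Z})$, I will compute $i_*\alpha$ globally via the projection formula, using $i_*[\cO_E(-jE)]=[\cO_{\widetilde{Z}}(-jE)]-[\cO_{\widetilde{Z}}(-(j+1)E)]$ and $i_*[\cO_E(E)]=[\cO_{\widetilde{Z}}(E)]-[\cO_{\widetilde{Z}}]$, together with an extension of each $Q_w$ to a class on $\widetilde{Z}$ (available in rational $K$-theory since $N_{Y|Z}$ has a natural representative pulled back from a tubular neighbourhood of $Y$ in $Z$). The resulting telescoping sum is then matched, term by term, with $[\cT_{\widetilde{Z}}]-[f^*\cT_Z]$ computed via the global analogues of the three exact sequences above in a neighbourhood of $E$. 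The main obstacle lies precisely here: promoting the identity from its restriction to $E$ to a global equality while handling the extensions of the $Q_w$. The equivariant version then requires no extra work, since all of the sequences employed above are $\CC^*$-equivariant under the stated hypothesis, so the same argument runs verbatim in equivariant rational $K$-theory.
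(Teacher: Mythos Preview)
Your approach diverges from the paper's and, as you yourself flag, it contains a genuine gap at the last step. The paper does not compute on $E$ and then lift; instead it invokes Proposition~2.12 and Theorem~2.13 of \cite{noi5}, which already give the comparison of Chern classes $c(\cT_{\widetilde{Z}})$ versus $f^*c(\cT_Z)$ in the Chow ring of $\widetilde{Z}$, and then converts that Chow identity into the stated $K$-theory identity using the exact sequences
\[
0 \to \cO_{\widetilde{Z}}(-(j+1)E) \longrightarrow \cO_{\widetilde{Z}}(-jE) \longrightarrow \cO_E(-jE) \to 0,\qquad 0\le j<w,
\]
which express each $i_*[\cO_E(-jE)]$ as a difference of global line-bundle classes. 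In other words, the heavy lifting (the global comparison of tangent bundles for a weighted blow-up) is imported from \cite{noi5}, and the present proof is essentially a change of language from Chow to $K$-theory.

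The problem with your route is exactly the one you name. Knowing that $[\cT_{\widetilde{Z}}]-[f^*\cT_Z]=i_*\gamma$ for some $\gamma$ (from the localization sequence) and that $i^*i_*\gamma=i^*i_*\alpha$ does \emph{not} yield $i_*\gamma=i_*\alpha$: the self-intersection formula gives $i^*i_*\beta=\beta\cdot(1-[\cO_E(-E)])$, and $1-[\cO_E(-E)]$ is nilpotent in $K(E)_\QQ$, hence a zero-divisor. Your attempted fix---extending the $Q_w$ to $\widetilde{Z}$ via a ``tubular neighbourhood'' and matching ``global analogues'' of the three exact sequences---is not available in the algebraic category, and those global sequences do not exist as stated. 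If you want a self-contained argument, the honest global input is the differential $df:\cT_{\widetilde{Z}}\to f^*\cT_Z$ (or dually the injection $f^*\Omega_Z\hookrightarrow\Omega_{\widetilde{Z}}$), whose cokernel is a sheaf on $\widetilde{Z}$ supported on $E$ that one must identify explicitly; this is precisely the content of the cited results in \cite{noi5}, so you would end up reproving them.
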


 \begin{proof}
   The proposition follows directly from Proposition 2.12 and Theorem 2.13 from \cite{noi5} after translating from the Chow classes to classes in $K$-theory,
  and using the short exact sequences  on $\widetilde{Z}$:
  \bea 0 \to \cO(-(j+1)E) \longrightarrow \cO(-jE) \longrightarrow \cO_E(-jE) \to 0 \eea
 for $0 \leq j < w.$
\end{proof}

 We apply the the above proposition in the context given by the following commutative diagram of weighted blow-ups

 \bea
\diagram & & X_{(j, j+2)} & & \\
& X_{(j, j+\frac{3}{2})} \urto^{v^{j, j+\frac{3}{2}}_{j, j+2}} \dlto_{v^{j, j+\frac{3}{2}}_{j, j+1}} & &
X_{(j+\frac{1}{2}, j+2)} \ulto_{v^{j+\frac{1}{2},j+2}_{j,j+2}}\drto^{v^{j+\frac{1}{2}, j+2}_{j+1, j+2}} & \\
X_{(j, j+1)} \dto^{\cong} & & X_{(j+\frac{1}{2}, j+\frac{3}{2})} \uuto^{\varphi_{(j,j+2)}} \dlto \dto^{\cong} \drto
\llto_{u^{j, j+2}_{j, j+1}} \ulto \urto \rrto^{u^{j, j+2}_{j+1, j+2}}  & & X_{(j+1, j+2)} \dto_{\cong} \\
\cU_{(j, j+1)}(X) \dto^{p_{(j,j+1)}}  & {m^{j,j+2 *}_{j,j+1}\cU_{(j, j+1)}(X)} \drto^{q} \lto_{H} & \cU_{(j, j+2)}(X) \dto^{p_{(j,j+2)}} \rto \lto^{ b} & {m^{j,j+2 *}_{j,j+1}\cU_{(j+1, j+2)}(X)} \dlto \rto  & \cU_{(j+1, j+2)}(X) \dto_{p_{(j+1,j+2)}} \\
M_{(j, j+1)}(X) & & M_{(j, j+2)}(X) \llto_{m^{j,j+2}_{j,j+1}} \rrto^{m^{j,j+2}_{j+1,j+2}} & & M_{(j+1, j+2)}(X)
\enddiagram
\eea

    With the identification $\cU_{(j,j+2)}\simeq X_{(j+\frac{1}{2},j+\frac{3}{2})}$, we can write $\varphi_{(j,j+2)}: \cU_{(j,j+2)} \to X_{(j,j+2)}$ as the composition of 2 weighted blow-ups. As well, using $\cU_{(j,j+1)}\simeq X_{(j,j+1)}$, note that $u_{j,j+1}^{j,j+2}$ factorizes through $m_{j,(j+1)}^{j,(j+2)*}(\cU_{(j,j+1)})$, where $ \cU_{(j,j+2)} \to m_{j,j+1 }^{j,j+2 *}(\cU_{(j,j+1)})$ is the standard blow-up of a codimension 2 stratum and $ m_{j,j+1}^{j,j+2 *}(\cU_{(j,j+1)}) \to \cU_{(j,j+1)}$ is a weighted blow-up.

  $X_{j+1}$ is a component of the fixed locus in $ X_{(j,j+2)}$, included in the two blow-up loci $\overline{X^+_{j+1}}$ and $\overline{X^-_{j+1}}$. Let $\cN_{X_{j+1}|X_{j+1}^+}= \oplus_w Q_{w,j+1}^+$ and $\cN_{X_{j+1}|X_{j+1}^-}= \oplus_w Q_{w,j+1}^-$ denote the positive and negative parts of the normal bundle of $X_{j+1}$ in $ X_{(j,j+2)}$ with their weight decompositions. The two exceptional divisors
 $T_{j+1}^+$ and $T_{j+1}^-$ in $\cU_{(j,j+2)}$ intersect after $\PP^w(\cN_{X_{j+1|X_{j+1}^+}}) \times_{X_j} \PP^w(\cN_{X_{j+1|X_{j+1}^-}})$. We will denote by $i^+_{j+1}$, $i^-_{j+1}$ and $i^{0}_{j+1}$ the embeddings of $T_{j+1}^+$,  $T_{j+1}^-$ and $ T_{j+1}^+ \cap T_{j+1}^-$ respectively in $\cU_{(j,j+2)}$, and by $\oplus_w \widetilde{Q}_{w,j+1}^-$ and  $\oplus_w  \widetilde{Q}_{w,j+1}^+$ the pullbacks of the normal bundles of $\overline{X^+_{j+1}}$ and $\overline{X^-_{j+1}}$ to $T_{j+1}^+$ and  $T_{j+1}^-$ respectively.

 For every $0 \leq j \leq k-2$ we will denote by $\widetilde{T}_j^+$, $\widetilde{T}_j^-$ and $\widetilde{T_j}$ the pseudo-divisors given by the pull-backs to $\cU_{(0,k)}(X)$ of $T_j^+$, $T_j^-$ and $T_j^++T_j^-$ via $u^{0,k}_{j,j+2}$.

 The complex $$( \cO_{\cU_{(0,k)}(X)}\rightarrow u^{0,k*}_{j,j+2}\cO_{\cU_{(j,j+2)}(X)}(T_j^+)) = : \cO_{\widetilde{T}_j^+} $$ is supported on $\widetilde{T}_j^+$. Similarly we define $\cO_{\widetilde{T}_j^-}$ and $\cO_{\widetilde{T}_j}$. Let $D_j := \widetilde{T}_j^+ \cap \widetilde{T}_j^-$ and $\cO_{D_j}=
(\cO_{\cU_{(0,k)}(X)}\oplus \cO_{\cU_{(0,k)}(X)} \rightarrow u_{j,j+2}^{0, k *}\cO_{\cU_{(j,j+2)}(X)}(T_j^+) \oplus u_{j,j+2 }^{0,k *}\cO_{\cU_{(j,j+2)}(X)}(T_j^-))$. We will denote by 
$Q_{w|D_j}^{\pm}:= \oplus u_{j,j+2 }^{0,k *}\varphi_{(j,j+2)}^*(Q_w^{\pm})\otimes \cO_{D_j}$.


 \begin{proposition} \label{Kclasses}

 With the notations from above, we have:

a) The following relation holds on $\cU_{(j,j+2)}(X)$: \bea [T_{ \cU_{(j,j+2)}(X)}] &-& [\varphi^*_{(j,j+2)}\cT_{X_{(j,j+2)}}] =  \\ &=& i^+_{j+1 * }([\cO(T_{j+1}^+)]-\sum_w\sum_{0 \leq l < w}[\widetilde{Q}_{w,j+1}^- \otimes \cO(-lT_{j+1}^+)])\\ &+&  i^-_{j+1 * }( [\cO(T_{j+1}^-)] - \sum_w\sum_{0 \leq l < w}[\widetilde{Q}_{w,j+1}^+ \otimes \cO(-lT_{j+1}^-)]). \eea

 \bea b)[\varphi^*_{(j,j+2)}\cT_{X_{(j,j+2)}}]&-&[u_{j,j+1}^{j,j+2 *}\cT_{X_{(j,j+1)}}] =[\cO( s_{\infty})]-[u_{j,j+1}^{j, j+2 *}\cO( s_{\infty})] - \\ &-& i^{0}_{j+1 *}(\sum_w\sum_{0 \leq l < w}\sum_{1\leq m \leq w-l}[Q_{w,j+1}^+ \otimes \cO(mT_{j+1}^+-lT_{j+1}^-)])\\ &-& i^{0}_{j+1 *}(\sum_w\sum_{0 \leq l < w}\sum_{1\leq m \leq w-l}[Q_{w,j+1}^- \otimes \cO(mT_{j+1}^--lT_{j+1}^+)]) +\\
 &+& i^-_{j+1 *} c_{-}+ i^+_{j+1 *} c_+\eea where $c_-$, $c_+$ are classes pulled-back from the boundary divisor $E_{(j,j+2)}$ in $M_{(j,j+2)}(X)$ to $T_{j+1}^-$ and $T_{j+1}^+$, respectively. Both $T_{j+1}^-$ and $T_{j+1}^+$ are $\PP^1$-bundles over $E_{(j,j+2)}$, and $E_{(j,j+2)} \cong T_{j+1}^- \bigcap T_{j+1}^+$.

 We denoted by $\cO(s_{\infty})$ the line bundle of the divisor given by the image of $s_\infty$, in the relevant universal family. By a slight abuse of notation, we denoted by $Q_{w,j+1}^{\pm}$ the pull-back of $Q_{w, j+1}^{\pm}$ to $T_j^+ \cap T_j^-$.

 c) Let $P^+_{w,l,j+1}:= - p_{(j,j+2)_*}i^0_{j+1 *}(\sum_{1\leq m \leq w-l}[Q_{w,j+1}^+ \otimes \cO(mT_{j+1}^+-lT_{j+1}^-)])$. The pullback of $P^+_{w,l,j+1}$ to the exceptional divisor $E_{(j,j+2)}$ of $M_{(j,j+2)}(X)$ is \bea [Q_{w,j+1}^+ \otimes\cO(lT_{j+1}^+)]-[Q_{w,j+1}^+ \otimes\cO(-lT_{j+1}^-)].\eea Assuming that there are vector bundles $K_{w,j+1}^+$, $L_{j+1}^{\pm}$ on  $M_{(j,j+2)}(X)$ whose pullbacks to $E_{(j,j+2)}$  are $Q_{w,j+1}^+$ and $\cO(T_{j+1}^{\pm})$ respectively,
 and such that $\cO(E_{(j,j+2)})=\cO(L_{j+1}^{+}+L_{j+1}^{-})$
  then \bea P^+_{w,l,j+1}=[K_{w,j+1}^+ \otimes\cO(lL_{j+1}^+)]-[K_{w,j+1}^+ \otimes\cO(-lL_{j+1}^-)]. \eea

d) The following relation holds in the equivariant $K$-theory of $M_{(0,k)}(X)$:
\bea \chi_{\neq 0}(p_{(0,k)*}[ev^*(\cT_X)]) &= &\sum_w\sum_{0 \leq l<w}[Q_{w,0}^+ \otimes\cO(l s_0)]+ \sum_w\sum_{0 \leq l<w}[Q_{w,k}]^- \otimes\cO(l  s_{\infty})] \\& +&  \sum_{j=1}^{k-1}\sum_{w}\sum_{0 \leq l < w }P^+_{w,l,j} +\sum_{j=1}^{k-1}\sum_{w}\sum_{0 \leq l < w } P^-_{w,l,j}.\eea

\end{proposition}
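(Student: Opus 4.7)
My plan is to derive (a) and (b) by applying the weighted blow-up formula of Proposition \ref{Chern} to the chains of weighted blow-ups that decompose $\varphi_{(j,j+2)}$ and the natural comparison maps from the large commutative diagram, obtain (c) by a projection-formula computation along the $\PP^1$-bundle $T^+_{j+1}\to E_{(j,j+2)}$, and assemble (d) by iterating (a)--(c) along the chain using Proposition \ref{induction}.

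For (a), the map $\varphi_{(j,j+2)}\colon \cU_{(j,j+2)} \to X_{(j,j+2)}$ factors through $X_{(j,j+\frac{3}{2})}$ (or equivalently $X_{(j+\frac{1}{2},j+2)}$) as two weighted blow-ups along $\overline{X^+_{j+1}}$ and the proper transform of $\overline{X^-_{j+1}}$, with exceptional divisors $T^-_{j+1}$ and $T^+_{j+1}$ and with normal-bundle weight pieces captured by $Q^-_{w,j+1}$ and $Q^+_{w,j+1}$ respectively. Applying Proposition \ref{Chern} to each blow-up and summing yields the stated formula; the transversality of $\overline{X^+_{j+1}}$ and $\overline{X^-_{j+1}}$ along $X_{j+1}$ ensures that no mixed correction term is needed.

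For (b), I would route through a different path in the diagram: use the weighted blow-ups $X_{(j,j+\frac{3}{2})} \to X_{(j,j+1)}$ along $Y^-_{(j,j+1)}$ and $X_{(j,j+\frac{3}{2})} \to X_{(j,j+2)}$ along $\overline{X^+_{j+1}}$, apply Proposition \ref{Chern} on each, subtract, and pull back to $\cU_{(j,j+2)}$ along $\varphi_{(j,j+2)}$ and $u^{j,j+2}_{j,j+1}$. The $[\cO(s_\infty)] - [u^{j,j+2*}_{j,j+1}\cO(s_\infty)]$ term arises from the change in the $\cO(s_\infty)$ line bundle induced by the weighted blow-up at the $s_\infty$ side; the double-sum terms supported on $i^0_{j+1}$ record how the pieces formerly sitting on $T^+_{j+1}\cap T^-_{j+1}$ split under the weight grading of the ideal of $X_{j+1}$, producing exactly the ranges $0\leq l < w$ and $1\leq m\leq w-l$; the residual classes $c_\pm$ absorb the contributions pulled back from $E_{(j,j+2)}\hookrightarrow M_{(j,j+2)}(X)$. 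Part (c) is a direct computation: the first claim follows from restricting the line bundles in sight to $E_{(j,j+2)} \cong T^+_{j+1}\cap T^-_{j+1}$ and using the short exact sequences from the proof of Proposition \ref{Chern}; the projection formula along the $\PP^1$-bundle $T^+_{j+1}\to E_{(j,j+2)}$ telescopes the inner sum over $m$; the closed-form expression then follows by replacing the restrictions with the global extensions $K^+_{w,j+1}$, $L^\pm_{j+1}$ and using $\cO(E_{(j,j+2)}) = \cO(L^+_{j+1}+L^-_{j+1})$.

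For (d), start with $ev^*\cT_X$ on $\cU_{(0,k)}$. Lemma \ref{action_on_X0k} identifies $X_{(0,k)}\to X$ as the weighted blow-up of $X$ along the source $X_0$ and the sink $X_k$; applying Proposition \ref{Chern} to this blow-up and pulling back via $\varphi_{(0,k)}$ replaces $[ev^*\cT_X]$ by $[\varphi^*_{(0,k)}\cT_{X_{(0,k)}}]$ plus boundary terms producing exactly $\sum_{l,w}[Q^+_{w,0}\otimes\cO(ls_0)] + \sum_{l,w}[Q^-_{w,k}\otimes \cO(ls_\infty)]$. Next, decompose $[\varphi^*_{(0,k)}\cT_{X_{(0,k)}}]$ as a telescoping sum by inductively comparing neighbouring pullbacks $[g^{0k*}_{j,l}\cT_{X_{(j,l)}}]$ and $[g^{0k*}_{j+1,l}\cT_{X_{(j+1,l)}}]$ via parts (a) and (b); Proposition \ref{induction} shows that these differences are independent of the second index $l$, so the telescoping consolidates at each interior $j$ into a contribution supported on $\widetilde{T}^\pm_j$. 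Finally, push forward by $p_{(0,k)*}$ and apply $\chi_{\neq 0}$: contributions pulled back from $M_{(0,k)}(X)$ or its substrata, including the residual $c_\pm$ classes from (b) and the $[\cO(s_\infty)]-[u^*\cO(s_\infty)]$ term (which is $\CC^*$-fixed modulo pullbacks from $M_{(0,k)}(X)$), have trivial $\CC^*$-weight after push-forward along the $\PP^1$-fibres and are therefore annihilated by $\chi_{\neq 0}$; what survives are precisely the moving-weight pieces $P^+_{w,l,j}+P^-_{w,l,j}$ at each interior $X_j$, as claimed.

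The principal technical obstacle is the bookkeeping in (b): disentangling the simultaneous effect of the two transversal weighted blow-ups on the grading of the ideal of $X_{j+1}$, matching $\CC^*$-weights across the various normal-bundle decompositions, and identifying the $\cO(s_\infty)$ discrepancy with the difference of the relevant relative cotangent sheaves. Once (b) is in hand, the remaining assembly of (d) is essentially formal, given (a), (c), Proposition \ref{induction} and Lemma \ref{action_on_X0k}.
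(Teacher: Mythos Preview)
Your proposal is essentially correct and follows the same overall strategy as the paper: parts (a) and (b) via repeated application of Proposition~\ref{Chern} to the factorization of $\varphi_{(j,j+2)}$ through $X_{(j,j+\frac32)}$, part (c) via short exact sequences, and part (d) by telescoping along the chain using (b) and Proposition~\ref{induction}.

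Two organizational differences are worth noting. For (d), the paper runs an explicit induction on $k$: the base case $k=1$ is handled by the relative Euler sequence for the $\PP^1$-bundle $\cU_{(0,1)}(X)\to M_{(0,1)}(X)$, which produces the source and sink contributions $\sum_{w,l}[Q^+_{w,0}\otimes\cO(ls_0)]$ and $\sum_{w,l}[Q^-_{w,k}\otimes\cO(ls_\infty)]$ directly, rather than via Lemma~\ref{action_on_X0k} as you suggest; the inductive step then uses only (b) and Proposition~\ref{induction}. Your direct telescoping is the unrolled form of this induction and is equally valid, but the Euler-sequence base case is slightly cleaner for tracking the $s_0,s_\infty$ terms. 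For (c), your phrase ``projection formula along the $\PP^1$-bundle $T^+_{j+1}\to E_{(j,j+2)}$'' is not quite the right picture: the pushforward $p_{(j,j+2)*}\circ i^0_{j+1*}$ is taken from $T^+_{j+1}\cap T^-_{j+1}$, which maps \emph{isomorphically} onto $E_{(j,j+2)}$; the paper instead telescopes the sum over $m$ using the short exact sequences $0\to\cO(-(l+1)E)\to\cO(-lE)\to\cO_E(-lE)\to 0$ on $M_{(j,j+2)}(X)$, tensored with the extensions $K^+_{w,j+1}$, after a change of variables $m=k-l$. This is a minor correction of mechanism, not of outcome.
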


 \begin{proof}

 a)This is just a direct application of Proposition \ref{Chern}. We use the fact that the strata $\overline{X_{j+1}^+}$ and $\overline{X_{j+1}^-}$ are transverse to each other and so that the normal bundle of the strict transform of $\overline{X_{j+1}^+}$ in $X_{(j+\frac{1}{2},j+2)}=Bl^w_{\overline{X_{j+1}^-}} X_{(j,j+2)}$ is the pull-back of the normal bundle of $\overline{X}_{j+1}^+$ in $X_{(j,j+2)}$.

 b) Denote by $\widetilde{\overline{X_{j+1}^-}}$ the strict transform of $\overline{X_{j+1}^-}$ in $X_{(j,j+\frac{3}{2})}$, and by $E$ the exceptional divisor for the morphism $ X_{(j,j+\frac{3}{2})} \to X_{(j,j+2)}$. The blow-down morphism $v^{j,j+\frac{3}{2}}_{ j,j+1}: X_{(j,j+\frac{3}{2})} \to X_{(j,j+1)}$ has the same exceptional divisor $E$. The normal bundle $N$ of $v^{j,j+\frac{3}{2}}_{ j,j+1}(\widetilde{\overline{X_{j+1}^-}})\cong \overline{X_{j+1}^-}$ in $X_{(j,j+1)}$ is the pull-back of the normal bundle of  $Y^-_{(j, j+1)}=p_{j,j+1}(v^{j,j+\frac{3}{2}}_{ j,j+1}(\widetilde{\overline{X_{j+1}^-}}))$ in $M_{(j,j+1)}(X)$.
 As the blow-up locus of $v_{ j,j+1}^{j,j+\frac{3}{2}*}$ is embedded in $v^{j,j+\frac{3}{2}}_{ j,j+1}(\widetilde{\overline{X_{j+1}^-}})$, by local computations
  \bea u^{j, j+2 *}_{j,j+1}N& =& v_{ j,j+\frac{3}{2}}^{j+\frac{1}{2},j+\frac{3}{2}*}v_{ j,j+1}^{j,j+\frac{3}{2}*}(N)=\oplus_w \widetilde{Q}_{w,j+1}^+\otimes\cO(wT_{j+1}^+)\\ &=& q^{- *}_{j,j+2}m^{j, j+2 *}_{j,j+1}\cN_{Y^-_{(j, j+1)}|M_{(j,j+2)}(X)},\eea where  $q^{- *}_{j,j+2}: T_{j+1}^- \to E_{(j,j+2)}$ is the projection to the exceptional divisor in $M_{j,j+2}(X)$.

 Furthermore, $T_{j+1}^+=v_{j,j+\frac{3}{2}}^{j+\frac{1}{2},j+\frac{3}{2} *}(E)$
 and so $Q_{w,j+1}^+\otimes \cO(-lT_{i+1}^-+(w-l)T_{j+1}^+)$ is the pull-back of a class from the exceptional divisor in $M_{j,j+2}(X)$.

  We use now that \bea i^-_{j+1 *}([\widetilde{Q}_{w,j+1}^+\otimes \cO(-lT_{j+1}^-)] &-& [\widetilde{Q}_{w,j+1}^+\otimes \cO(-lT_{i+1}^-+(w-l)T_{j+1}^+)])= \\&-&\sum _{0<m \leq w-l} i^{0}_{j+1 *}([Q_{w,j+1}^+\otimes \cO(-lT_{i+1}^-+mT_{j+1}^+)])\eea
  which follows from successive application of short exact sequences
  \bea 0 \to \cO((m-1)T^+_{j+1}) \longrightarrow \cO(mT^+_{j+1}) \longrightarrow \cO_{T^+_{j+1}}(mT^+_{j+1}) \to 0 \eea
  tensored with the bundles above.


  The formula now follows from the application of Proposition \ref{Chern} for the morphism $\cU_{(j,j+2)} \to X_{(j,j+\frac{3}{2})}$, combined with formula (a) and the computations above.


  c) The composition $p_{(j,j+2)}\circ i^0_{j+1} : T_{j+1}^+ \cap T_{j+1}^- \to M_{(j,j+2)}(X)$ is an isomorphism on its image $E_{(j,j+2)}$.

  Assume that there are vector bundles $K_{w,j+1}^+$, $L_{j+1}^{\pm}$ on  $M_{(j,j+2)}(X)$ whose pullbacks to $E_{(j,j+2)}$  are $Q_{w,j+1}^+$ and $\cO(T_{j+1}^{\pm})$ respectively, and such that $\cO(E_{(j,j+2)})=\cO(L_{j+1}^{+}+L_{j+1}^{-})$,
From the short exact sequence
\bea 0 \to \cO \longrightarrow \cO(E_{(j,j+2)}) \longrightarrow \cO_{E_{(j,j+2)}}(E_{(j,j+2)}) \to 0 \eea
tensored with relevant vector bundles, we get
 \bea [K^+_{w,j+1} \otimes \cO(kL_{j+1}^+)] &-&[K^+_{w,j+1} \otimes \cO(-kL_{j+1}^-)]= \\ &=& i_{*}(\sum_{k=1}^{w}\sum_{l=1}^{k-1}[Q^+_{w,j+1} \otimes \cO(kT_{j+1}^+-l\cO(E_{(j,j+2)}))], \eea
for the embedding $i: E_{(j,j+2)} \hookrightarrow M_{(j,j+2)}(X)$.

After a change of variables $m=k-l$, the last term above becomes $P^+_{w,l,j+1}$.

After replacing $M_{(j,j+2)}(X)$ by the normal bundle of $E_{(j,j+2)}$ in $M_{(j,j+2)}(X)$, the assumptions are satisfied in this new ambient space. We can now get the formula for the pull-back of $P^+_{w,l,j+1}$ to $E_{(j,j+2)}$ by applying the arguments above in this new context, and then pulling-back to $E_{(j,j+2)}$.

\bea 0 \to \cO(-(j+1)E) \longrightarrow \cO(-jE) \longrightarrow \cO_E(-jE) \to 0 \eea

 For part d) we apply induction on $k$.  At the initial step we use the relative Euler sequence for $\PP^1$-bundles to get the relative tangent bundle of $\cU_{(0,1)}(X) \to M_{(0,1)}(X)$ as $\cO(s_0) \oplus \cO(s_{\infty})/\cO_{\cU_{(0,1)}(X)}$.
 Then we apply formula (b) to get the formula for $k=2$. The induction step from $k$ to $k+1$ uses the same formula (b) for $j=k-1$ and Proposition \ref{induction}.

 \end{proof}

In order to obtain a nice closed formula for the moving part $e^{\CC^*}(\cN_{M_{0,k}(X)}^{vir})$,  we will assume that the conditions of part c) of the previous Proposition are satisfied. That is, we assume there are classes of vector bundles $K_{w,j}^{\pm}$ and line bundles $L_{w,j}^{\pm}$ on $M_{0,k}(X)$ such that $K_{w,j}^{\pm} \otimes \cO(D_j)= m^{0,k*}_{i,i+2}(Q_{w,j}^{\pm})\otimes \cO(D_j)$ and $L_{j}^{\pm} \otimes \cO(D_j)= m^{0,k*}_{i,i+2}\cO(T_{j}^{\pm})\otimes \cO(D_j)$. We denote by $c_K(t)$ the Chern polynomial of the vector bundle $K$. Let $p_N(t):=t^{\deg c_N}c_N(\frac{1}{t})$ and $\xi_j^\pm:=c_1(L_j^\pm)$.

\begin{theorem}

With the above notations $$e^{\CC^*}(\chi_{\neq 0}(R^{\bullet} p_{(0,k) *}( ev^*(\cT_X)))) = \prod_w \prod_{l=0}^{w-1}p_{Q_{w,0}^+}(w-lt-l\psi_1) \prod_w \prod_{l=0}^{w-1}p_{Q_{w,k}^-}((-w+l)t-l\psi_2) $$ $$\prod_{j=1}^{k-1} \frac{\prod_w \prod_{l=0}^{w-1}p_{K^+_{w,j}}((w-l)t +\xi_j^+)\prod_w \prod_{l=0}^{w-1}p_{K^-_{w,j}}((-w+l)t +\xi_j^-)}{\prod_w \prod_{l=0}^{w-1}p_{K^+_{w,j}}((w-l)t -\xi_j^-)\prod_w \prod_{l=0}^{w-1}p_{K^-_{w,j}}((-w+l)t -\xi_j^+)}.$$

\end{theorem}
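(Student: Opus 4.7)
The plan is to combine Proposition~\ref{Kclasses}(d), which expresses the class $\chi_{\neq 0}(R^\bullet p_{(0,k)*}(ev^*\cT_X))$ in equivariant $K$-theory as a sum of boundary terms at $j=0$ and $j=k$ together with interior contributions $P^\pm_{w,l,j}$ for $1\le j\le k-1$, with Proposition~\ref{Kclasses}(c), which rewrites each $P^\pm_{w,l,j}$ as a $K$-theoretic difference of two tensor products of bundles on $M_{(0,k)}(X)$. Because $e^{\CC^*}$ is multiplicative on direct sums and sends a difference of classes in $K$-theory to a ratio of Euler classes, the equivariant Euler class of the virtual normal bundle will factor as a product (from the boundary summands) times, for each intermediate $j$, a ratio whose numerator and denominator come from the two terms of $P^+_{w,l,j}+P^-_{w,l,j}$.

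To evaluate each individual factor, I will apply the elementary identity $e^{\CC^*}(V\otimes L)=p_V(c_1^{\CC^*}(L)+at)$, valid whenever $V$ has fibre-scalar $\CC^*$-weight $a$, where $p_V(s)=s^{\rank V}c_V(1/s)$ is the reverse Chern polynomial introduced just before the theorem. The scalar weights are read off from the geometry: $Q^+_{w,0}$ and $K^+_{w,j}$ carry weight $w$ as summands of the positive parts of the normal bundles $\cN_{X_0\mid X_0^+}$ and $\cN_{X_j\mid X_j^+}$, while $Q^-_{w,k}$ and $K^-_{w,j}$ carry weight $-w$.

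The remaining step is to compute the equivariant first Chern class of each auxiliary line bundle. For $\sigma_0^*\cO(ls_0)$ I will use the fact that the tangent direction of the universal curve at $s_0$ has a definite $\CC^*$-weight determined by the $(1,-1)$ action on the components of the domain $\PP^1$, producing an equivariant $c_1$ whose ordinary part is $-l\psi_1$ and whose equivariant part is linear in $t$; combined with the weight-$w$ action on $Q^+_{w,0}$, this yields the argument $(w-l)t-l\psi_1$ of $p_{Q^+_{w,0}}$, and the symmetric computation at $s_\infty$ gives $(-w+l)t-l\psi_2$. For the interior factors, the equivariant first Chern classes of $L_j^{\pm}$ are obtained from the $\CC^*$-weights on the normal bundles of the blow-up loci $Y^-_{(j-1,j)}$ and $\overline{X^+_j}$, as recorded in the blow-up theorem preceding Corollary~\ref{basisofXpyramid}, together with the assumption of Proposition~\ref{Kclasses}(c) that $\cO(E_{(j-1,j+1)})\cong\cO(L_j^++L_j^-)$; this pins down the $\CC^*$-weights of $L_j^\pm$ relative to the ordinary classes $\xi_j^\pm=c_1(L_j^\pm)$.

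The main obstacle will be the careful sign and weight bookkeeping in this last step: correctly decomposing the equivariant $c_1$ of each of the six line bundles $\cO(ls_0)$, $\cO(ls_\infty)$, $\cO(lL_j^+)$, $\cO(-lL_j^-)$, $\cO(lL_j^-)$ and $\cO(-lL_j^+)$ into its ordinary part and its $\CC^*$-weight contribution, so that the substitution $\lambda=c_1^{\CC^*}(L)$ into $p_V(at+\lambda)$ produces exactly the arguments $(w-l)t\pm\xi_j^\pm$ and $(-w+l)t\pm\xi_j^\mp$ appearing in the numerator and denominator of the interior ratios. Once these equivariant Chern classes are identified, assembling the stated product-and-ratio formula from the decomposition of Step~1 is entirely formal.
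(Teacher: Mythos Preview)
The paper does not supply a proof of this theorem: it is stated immediately after Proposition~\ref{Kclasses} and the definitions of $p_N$ and $\xi_j^\pm$, and the text then moves on to the next subsection. The theorem is evidently meant to be read as the direct translation of Proposition~\ref{Kclasses}(d) (with the $P^\pm_{w,l,j}$ expanded via part~(c)) from equivariant $K$-theory into equivariant cohomology, using multiplicativity of $e^{\CC^*}$ and the identity $e^{\CC^*}(V\otimes L)=p_V(at+c_1^{\CC^*}(L))$ for a bundle $V$ of scalar weight $a$.

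Your proposal is exactly this argument, spelled out in more detail than the paper provides; the ingredients and the order in which you assemble them are the natural ones, and nothing further is needed beyond the weight bookkeeping you flag at the end.
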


\subsection{Moduli spaces $M_{\omega, 2}$ and their virtual classes.}

Let's consider first the case $\omega =((u_0, u_\infty), n\beta)$, where $\beta$ is the class of the generic curve. This case can be treated in the same way as  $\omega =((u_0, u_\infty), \beta)$ by composing the action of $\CC^*$ on $X$ with the map $ \CC^* \times X \to \CC^* \times X $ which is the quotient by the cyclic subgroup $\ZZ_n$  on the first component, and identity on the second. We now have a new action of $\CC^*$ on $X$. In this case there is a morphism $M_{(0,k)}(X) \to M_{\omega, 2}$ whose fiber over points in $ M_{\omega, 2}$ parameterizing irreducible curves is $ B\ZZ_n=[\mbox{point}/\ZZ_n]$, while over points parameterizing a chain of $k$ curves, the fiber is $B(\ZZ_n)^k$. For this reason we will denote $M_{\omega, 2}=:(M_{(0,k)}(X)/ \ZZ_n)$.

In this case, the fixed part of the virtual class works is calculated be the same formula as in the previous section, with  \bea \Delta: (M_{(j,j+1)}(X)/\ZZ_n) \to \prod_{j=0}^{k-2} (M_{(j,j+1)}(X)/\ZZ_n) \times (M_{(j+1,j+2)}/\ZZ_n) \eea and $p: \prod_{j=0}^{k-2}(M_{(j,j+2)}/\ZZ_n) \to  \prod_{j=0}^{k-2} (M_{(j,j+1)}(X)/\ZZ_n) \times (M_{(j+1,j+2)}/\ZZ_n)$

Similarly, the virtual normal bundle of $M_{\omega, 2}$ is
\bea \chi_{\neq 0}(R^{\bullet} p^n_{(0,k) *} ev^*(\cT_X)) &= &\sum_w\sum_{0 \leq l<nw}[Q_{w,0}^+ \otimes\cO(l s_0)]+ \sum_w\sum_{0 \leq l<nw}[Q^-_{w,l}] \otimes\cO(l  s_{\infty})] \\& +&  \sum_{j=1}^{k-1}\sum_{w}\sum_{0 \leq l < nw }P^+_{w,l,j} +\sum_{j=1}^{k-1}\sum_{w}\sum_{0 \leq l < nw } P^-_{w,l,j}.\eea

We consider now $M_{\omega, 2}$ in the case when $\omega =((u, v), n\beta)$ when $\beta=v-u$ is not the class of the generic orbit and $n \in \ZZ_{>0}$. The choice of the moment map $\mu:X \to \RR$ depends on the choice of the line bundle which induces a linear map $p: H^{1,1}(X)^\vee \to \RR$ such that $\mu= p \circ \mu^m$. If $p(u)=i_j$ and $p(v)=i_q$ are the $j$-th and the $q$-th walls of $\mu$, then there is a natural closed embedding $M_{\omega, 2} \subset (M_{(j,q)}(X)/\ZZ_n)$. Moreover, the fixed part of the virtual class of $M_{\omega, 2}$ is $h_{j+1,q}^*\PP^w(N_{X_{j}|X_{j}^+}) \cap h_{j,q-1}^*(\PP^w(N_{X_{q}|X_{q}^-})$ for the natural maps  $h_{j+1,q}: (M_{(j,q)}(X)/\ZZ_n) \to (M_{(j+1,q)}(X)/\ZZ_n) $ and $h_{j,q-1}: (M_{(j,q)}(X)/\ZZ_n) \to (M_{(j,q-1)} (X)/\ZZ_n)$.

The class of the virtual normal bundle is \bea && \chi_{\neq 0}(R^{\bullet} p^n_{(j,q) *} ev^*(\cT_X))= \\ &=&\sum_w\sum_{0 \leq l<nw}[Q_{w,j}^+ \otimes\cO(l \widetilde{T_j}^+)]-\sum_w\sum_{0 \leq l<nw}[Q_{w,j}^- \otimes\cO(-l \widetilde{T_j}^+)] \\&+& \sum_w\sum_{0 \leq l<nw}[Q_{w,q}]^- \otimes \cO(l \widetilde{T_q}^-)] -\sum_w\sum_{0 \leq l<nw}[Q_{w,q}]^+ \otimes \cO(-l \widetilde{T_q}^-)]\\ &+&  \sum_{s=j+1}^{q-1}\sum_{w}\sum_{0 \leq l < nw }P^+_{w,l,s} +\sum_{s=j+1}^{q-1}\sum_{w}\sum_{0 \leq l < w } P^-_{w,l,s}\eea

These formulae follow from those in Proposition \ref{Kclasses} together with the  product axiom of \cite{ber}.

Finally, we consider $M_{\omega, 2}$ in the case when $\omega =((u, v), \frac{n}{m}\beta)$ for $\beta=v-u$ and $n,m \in \ZZ_{>0}$. There is a smooth variety $X_m \subset X$ such that $X_m= \{x \in X | \ZZ_m \subset\mbox{Stab}(x)\}$. Moreover $\ZZ_m$ acts nontrivially on the fibers of the normal bundle $N_{X_m|X}$. As such the only contribution of $R^{\bullet} p_*(ev^*(TX))$ to the fixed part of the virtual class of $M_{\omega, 2}$ comes from $R^{\bullet} p_*(ev^*(TX_m))$. The formula for the class of the virtual normal bundle of $M_{\omega, 2}$ is \bea && \chi_{\neq 0}(R^{\bullet} p^{n/m}_{(j,q) *} ev^*(\cT_X)) = \\ &= &\sum_w\sum_{0 \leq l<\frac{n}{m}w}[Q_{w,j}^+ \otimes\cO(lm \widetilde{T_j}^+)]-\sum_w\sum_{0 \leq l<\frac{n}{m}w}[Q_{w,j}^- \otimes\cO(-lm \widetilde{T_j}^+)] \\&+& \sum_w\sum_{0 \leq l<\frac{n}{m}w}[Q_{w,q}]^- \otimes \cO(lm \widetilde{T_q}^-)] -\sum_w\sum_{0 \leq l<\frac{n}{m}w}[Q_{w,q}]^+ \otimes \cO(-lm \widetilde{T_q}^-)]\\& +&  \sum_{s=j+1}^{q-1}\sum_{w}\sum_{0 \leq l < \frac{n}{m}w}P^+_{w,lm,s} +\sum_{s=j+1}^{q-1}\sum_{w}\sum_{0 \leq l < \frac{n}{m}w} P^-_{w,lm,s}.\eea


\providecommand{\bysame}{\leavevmode\hbox
to3em{\hrulefill}\thinspace}

\end{document}